\title{Fan realizations for some~$2$-associahedra}
\thanks{TM was supported by a French doctoral grant Gaspard Monge of the {\'E}cole Polytechnique.}
\author{Thibault Manneville}
\address[TM]{LIX, \'Ecole Polytechnique, Palaiseau}
\email{thibault.manneville@lix.polytechnique.fr}
\newcolumntype{K}[1]{>{\centering\arraybackslash$}m{#1}<{$}}
\theoremstyle{plain}
\newtheorem{theorem}{Theorem}
\newtheorem{lemma}[theorem]{Lemma}
\newtheorem{proposition}[theorem]{Proposition}
\newtheorem{corollary}[theorem]{Corollary}
\newtheorem{observation}{Observation}
\theoremstyle{definition}
\theoremstyle{remark}
\newtheorem{remark}[theorem]{Remark}
\newtheorem{question}{Question}
\definecolor{darkblue}{rgb}{0,0,0.7} 
\newcommand{\darkblue}{\color{darkblue}} 
\newcommand{\defn}[1]{\emph{\darkblue #1}} 
\newcommand{\apriori}{\textit{a priori }}
\newcommand{\Rmbb}{\mathbb{R}}\newcommand{\Nmbb}{\mathbb{N}}
\newcommand{\Nmcal}{\mathcal{N}}
\newcommand{\Pmcal}{\mathcal{P}}\newcommand{\Fmcal}{\mathcal{F}}
\newcommand{\ssm}{\smallsetminus} 
\newcommand{\eqdef}{\mbox{\,\raisebox{0.2ex}{\scriptsize\ensuremath{\mathrm:}}\ensuremath{=}\,}} 
\newcommand{\transpose}{^{-1}}
\newcommand{\polygon}{\Pmcal}
\newcommand{\multiassociahedron}{\mathbf{\Delta}}
\newcommandx{\ray}[1][1={(i,j)}]{\mathbf{v}_{#1}} 
\newcommand{\rays}{R} 
\newcommand{\e}{\mathbf{e}} 
\newcommand{\f}{\mathbf{f}} 
\newcommandx{\fanRealization}[1][1={2,n}]{\mathbf{\Fmcal_{#1}}}
\newcommand{\fan}{\Fmcal}
\newcommandx{\random}[1][1={i,j}]{\varepsilon_{(#1)}} 
\newcommandx{\coefLeft}[1][1={i,j}]{\lambda_{(#1)}} 
\newcommandx{\coefRight}[1][1={i,j}]{\rho_{(#1)}} 
\newcommandx{\coefA}[1][1={i,j}]{\alpha_{(#1)}} 
\newcommandx{\coefB}[1][1={i,j}]{\beta_{(#1)}} 
\newcommandx{\coefC}[1][1={i,j}]{\varepsilon_{(#1)}} 
\newcommand{\vertices}{\mathcal{V}} 
\newcommandx{\complex}[1][1=C]{\mathcal{#1}}
\newcommand{\network}{\Nmcal}
\newcommand{\segment}{\mathcal{I}}
\newcommand{\cone}{\Rmbb_{\geq0}} 
\DeclareMathOperator{\lk}{lk} 
\DeclareMathOperator{\st}{st} 
\DeclareMathOperator{\del}{del} 
\DeclareMathOperator{\stell}{stell} 
\DeclareMathOperator{\ops}{ops} 
\DeclareMathOperator{\join}{\ast} 
\newcommandx{\face}[1][1=f]{#1} 
\newcommandx{\facet}[1][1=F]{#1} 
\newcommandx{\ridge}[1][1=R]{#1} 
\newcommand{\sq}[1]{\mathsf{#1}} 
\newcommandx{\permutation}[1][1=\pi]{#1} 
\newcommand{\symmetricGroup}{\mathfrak{S}} 
\newcommand{\subwordComplex}{\mathcal{S}} 
\newcommand{\word}{\sq{Q}} 
\newcommandx{\factor}[1][1=U]{\sq{#1}} 
\newcommand{\commuteTo}{\sim} 
\DeclareMathOperator{\id}{Id} 
\newcommand{\rotated}{\circlearrowleft} 
\begin{document}

\begin{abstract}
A~$k$-associahedron is a simplicial complex whose facets, called~$k$-triangulations, are the inclusion maximal sets of diagonals of a convex polygon where no~$k+1$ diagonals mutually cross. Such complexes are conjectured for about a decade to have realizations as convex polytopes, and therefore as complete simplicial fans. Apart from four one-parameter families including simplices, cyclic polytopes and classical associahedra, only two instances of multiassociahedra have been geometrically realized so far. This paper reports on conjectural realizations for all~$2$-associahedra, obtained by heuristic methods arising from natural geometric intuition on subword complexes. Experiments certify that we obtain fan realizations of~$2$-associahedra of an~$n$-gon for~$n\in\{10,11,12,13\}$, further ones being out of our computational reach.
\medskip

\noindent\textsc{keywords.} Multiassociahedra $\cdot$ multitriangulations $\cdot$ subword complexes~$\cdot$~fans

\medskip
\noindent
\textsc{MSC classes.} 52B11, 52B12, 52B40, 05E45.
\end{abstract}

\maketitle

\captionsetup{width=.97\textwidth}

\section{Introduction}
\label{sec:introduction}

For integers~$k$ and~$n$, we consider a convex polygon~$\polygon$ with~$n+2k+1$ vertices and call a~\defn{$k$-triangulation} (or~\defn{multitriangulation} for unspecified~$k$) of~$\polygon$ any inclusion maximal set of diagonals such that no~$k+1$ of them mutually cross (see Figure~\ref{fig:2triangulations9gon}). As any diagonal with at most~$k-1$ vertices of~$\polygon$ on one side belongs to any~$k$-triangulation, we only consider the other diagonals, called~\defn{$k$-relevant diagonals}, as part of a~$k$-triangulation. The~\defn{$k$-associahedron}~$\multiassociahedron_{k,n}$ (or~\defn{multiassociahedron} for unspecified~$k$) is then the simplicial complex whose facets are the~$k$-triangulations of~$\polygon$. It was introduced by V.~Capoyleas and J.~Pach in~\cite{CapoyleasPach} where multitriangulations were studied as geometric graphs, after which the complex itself was independently shown to be pure by T.~Nakamigawa~\cite{Nakamigawa}, and A.~W.~M.~Dress, J.~H.~Koolen and V.~Moulton~\cite{DressKoolenMoulton}. It was also proved to be a piecewise linear sphere of dimension~$(kn-1)$ in an unpublished paper of J.~Jonsson~\cite{Jonsson-generalizedTriangulations}. Many structural aspects of multitriangulations, in particular their decomposition into~\defn{stars}, were then studied by V.~Pilaud and F.~Santos~\cite{PilaudSantos-multitriangulationsComplexesStarPolygons} in order to approach several open problems. Among them V.~Pilaud and F.~Santos recall a question first asked by J.~Jonsson~\cite{Jonsson-generalizedTriangulationsDiagonalFreeSubsetsStackPolyominoes} about geometric realizations of multiassociahedra.

\begin{question}
Are multiassociahedra boundary complexes of some convex polytopes?
\end{question}

Some instances of multiassociahedra turn out to be classical in polytope theory and therefore give a positive answer to this question in the following cases (see~\cite{PilaudSantos-multitriangulationsComplexesStarPolygons} for details, and~\cite{Ziegler} for a general background on polytopes).
\begin{compactitem}
 \item For~$n=0$, the complex~$\multiassociahedron_{k,0}$ is reduced to a single point (the empty set).
 \item For~$n=1$, the complex~$\multiassociahedron_{k,1}$ is the boundary of a~$k$-simplex.
 \item For~$n=2$, the complex~$\multiassociahedron_{k,2}$ is the boundary complex of a cyclic polytope.
 \item For~$k=1$, the complex~$\multiassociahedron_{1,n}$ is the complex of usual triangulations called (classical) associahedron, which was initially realized as a convex polytope by M.~Haiman~\cite{Haiman} and C.~Lee~\cite{Lee}, followed by many other explicit realizations (\cite{Loday,HohlwegLange,CeballosSantosZiegler} to cite a few, see Figures~\ref{fig:associahedraCeballosSantosZiegler} and~\ref{fig:associahedraLodayHohlwegLange}). 
\end{compactitem}

\begin{figure}
\centerline{\includegraphics[width=\textwidth]{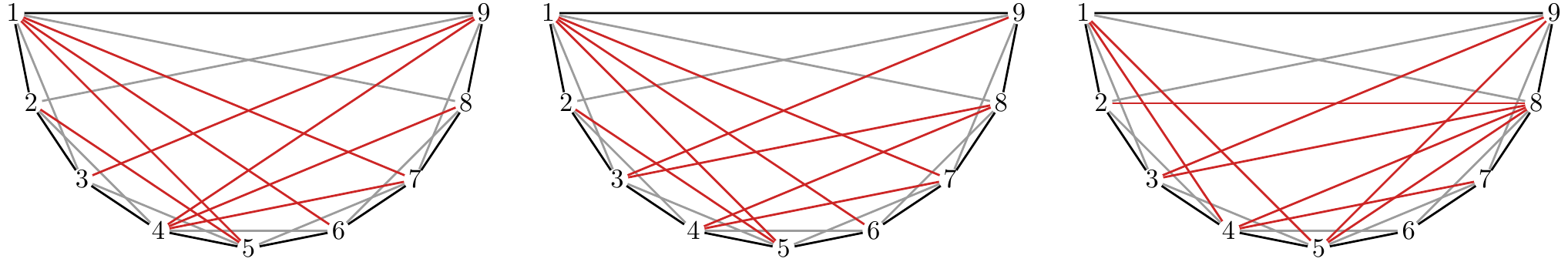}}
\caption{Three~$2$-triangulations of a~$9$-gon ($2$-relevant diagonals appear red).}
\label{fig:2triangulations9gon}
\end{figure}

\begin{figure}
\centerline{\begin{overpic}[width=\textwidth]{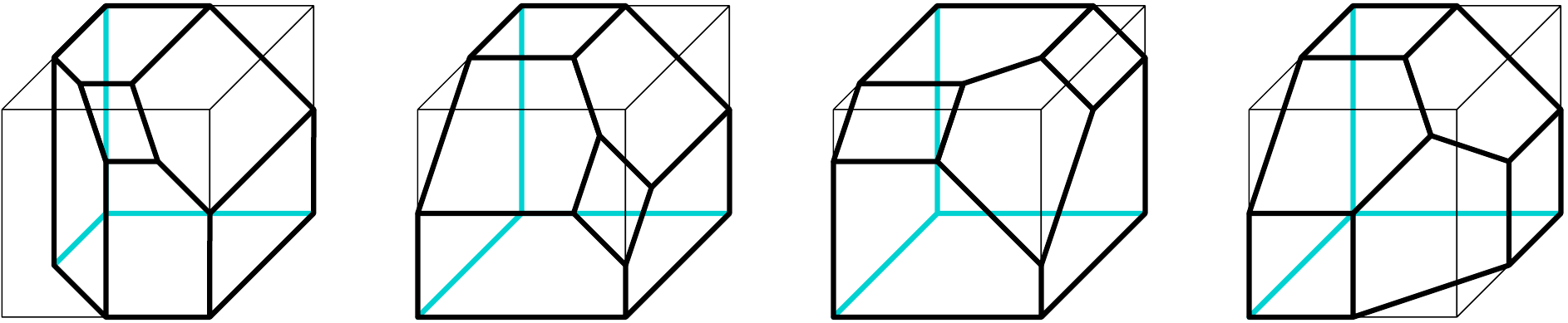}\put(7,-11){\cite{Loday,HohlwegLange}}\put(119,-11){\cite{HohlwegLange}}\put(175,-11){\cite{ChapotonFominZelevinsky,HohlwegLange,CeballosSantosZiegler}}\put(305,-11){\cite{CeballosSantosZiegler}}\end{overpic}}
\vspace{.4cm}
\caption{Several geometrically non equivalent polytopal realizations of the~$3$-dimensional classical associahedron. Figure from~\cite{CeballosSantosZiegler}, with permission.}
\label{fig:associahedraCeballosSantosZiegler}
\end{figure}

Apart from these classical realizations, a first oriented matroid theory approach allowed J.~Bokowski and V.~Pilaud to realize~$\multiassociahedron_{2,3}$ as a convex polytope in~\cite{BokowskiPilaud}. Using the framework of~\defn{sorting networks} as introduced by V.~Pilaud and M.~Pocchiola~\cite{PilaudPocchiola}, V.~Pilaud and F.~Santos then constructed brick polytopes in~\cite{PilaudSantos-brickPolytope} as an attempt to realize multiassociahedra. If these objects turned out to be interesting by themselves, none of them realized more multiassociahedra. C.~Stump observed in~\cite{Stump} the connection between multitriangulations and subword complexes as depicted by A.~Knutson and E.~Miller in~\cite{KnutsonMiller-subwordComplexesCoxeterGroups, KnutsonMiller-GrobnerGeometrySchubertPolynomials}.
C.~Ceballos, J.-P.~Labb{\'e} and C.~Stump then extended multiassociahedra to~\defn{multi cluster complexes} in any Coxeter type and developed combinatorial tools for subword complexes in~\cite{CeballosLabbeStump}. Finally N.~Bergeron, C.~Ceballos and J.-P.~Labb{\'e} used Gale duality in~\cite{BergeronCeballosLabbe} to realize as fans all complexes~$\multiassociahedron_{k,3}$ for~$k\in\Nmbb$, so as~$\multiassociahedron_{2,4}$ and~$\multiassociahedron_{3,4}$.

Using again subword complexes, we provide the first fan realizations of the complexes~$\multiassociahedron_{2,n}$ for~$n\in\{5,6,7,8\}$ and conjectural rays for any complex~$\multiassociahedron_{2,n}$ with~$n\in\Nmbb$ (see Question~\ref{que:fanRealizations}). All computations involved in this work were done with the software Sagemath~\cite{sage} (with available source code\footnote{\url{https://arxiv.org/abs/1608.08491}}).
We consider a convex~$(n+5)$-gon with vertices cyclically labeled from~$1$ to~$n+5$ and denote the diagonal between~$i\in[n+5]$ and~$j\ge i$ by~$(i,j)$. We denote by~$(\e_1,\dots,\e_n,\f_1,\dots,\f_n)$ the canonical basis of~$\Rmbb^{2n}$ and associate to each~$2$-relevant diagonal~$(i,j)$ a vector~$\ray$ in~$\Rmbb^{2n}$ as follows (see Theorem~\ref{thm:identificationMultiassociahedronSubwordComplex} and Figure~\ref{fig:identificationMultiassociahedronSubwordComplex}, and Figure~\ref{fig:2associahedronConjecturedPattern}).
\begin{compactenum}[\bf a)]
 \item $\ray[(1,4)] = \e_n - \f_n$ and~$\ray[(1,j+4)] = (2n + 2 - j) (\e_j - \e_{j+1}) + \e_n + \f_j - \f_n$  for~$j\in[n-1]$;
 \item $\ray[(2,j+4)] = \e_j + (2n + 2 - j) (\e_j - \e_{j+1}) + \f_j$ for~$j\in[n-1]$ and~$\ray[(2,n+4)] = \e_n + \f_n$;
 \item $\ray[(3,j+5)] = -\e_j$ for~$j\in[n]$;
 \item $\ray[(4,j + 6)] = \e_j + (2n + 2 - j) (\e_j - \e_{j+1}) - \f_j$ for~$j\in[n-1]$;
 \item $\ray[(i+4,i+j+6)] = j\,\e_i-(j-1)(\e_{i+j} + \e_{i+j+1}) + (2n+4-i)(\e_{i+j} - \e_{i+1}) + \f_i - \f_{i+j}$ for~$i\in[n-2]$ and~$j\in[n-i-1]$.
\end{compactenum}

\begin{theorem}
\label{thm:fanRealizations}
The vectors~$\ray$ are the rays of a complete simplicial fan in~$\Rmbb^{2n}$ which realizes the multiassociahedron~$\multiassociahedron_{2,n}$ for~$n\in[8]$.
\end{theorem}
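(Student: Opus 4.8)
The plan is to reduce the statement to a finite list of linear-algebra checks by invoking the standard criterion that tells when a ray assignment to a simplicial sphere induces a complete simplicial fan. I will use the following form of it: if $\Delta$ is a simplicial sphere of dimension $d-1$ with vertex set $V$ and $\{\mathbf{v}_s\}_{s\in V}$ are nonzero vectors of $\Rmbb^d$, then the cones $\mathrm{cone}\{\mathbf{v}_s : s\in F\}$, ranging over all faces $F$ of $\Delta$, form a complete simplicial fan realizing $\Delta$ provided that (i) the rays of every facet are linearly independent, hence a basis of $\Rmbb^d$, and (ii) for every interior ridge $R$, shared by exactly two facets $F=R\cup\{a\}$ and $F'=R\cup\{b\}$, the apices $\mathbf{v}_a$ and $\mathbf{v}_b$ lie on opposite sides of the hyperplane spanned by $\{\mathbf{v}_s : s\in R\}$; equivalently, in the linear dependence $c_a\mathbf{v}_a+c_b\mathbf{v}_b+\sum_{s\in R}c_s\mathbf{v}_s=0$ among the $d+1$ rays of $F\cup F'$, which is unique up to scaling, the coefficients $c_a$ and $c_b$ share a sign. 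The role of the sphere hypothesis is to promote these purely local wall conditions to global completeness: once every wall is crossed correctly, the cones must tile $\Rmbb^d$ without overlap.

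First I would fix the combinatorial model. By Theorem~\ref{thm:identificationMultiassociahedronSubwordComplex}, $\multiassociahedron_{2,n}$ is an explicit subword complex whose vertices are the $2$-relevant diagonals $(i,j)$ and whose facets are the $2$-triangulations of~$\polygon$. This complex is a simplicial sphere of dimension $2n-1$ (Jonsson~\cite{Jonsson-generalizedTriangulations}), so here $d=2n$, each facet carries exactly $2n$ vertices, and the interior ridges are precisely the flips exchanging a single diagonal. I would also record that the assignment $(i,j)\mapsto\ray$ sends distinct diagonals to distinct nonzero vectors, so that the cone structure faithfully encodes the abstract face poset. This turns conditions (i) and (ii) into statements indexed, respectively, by the $2$-triangulations and by the flips of $\multiassociahedron_{2,n}$.

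For each fixed $n\in[8]$ the verification then becomes bounded. To check (i) I would enumerate the $2$-triangulations, form for each the $2n\times2n$ integer matrix of its rays, and confirm that its determinant is nonzero; working over $\Qmbb$ this is certified exactly. To check (ii) I would walk the flip graph: for each flip between facets $F$ and $F'$ I solve for the unique dependence among the $2n+1$ rays of $F\cup F'$ and test that the two coefficients carried by the flipped diagonals have the same sign. Since $\multiassociahedron_{2,n}$ is a sphere, the criterion then yields a complete simplicial fan realizing it the moment both batteries of checks succeed; these are exactly the computations carried out in Sagemath.

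The real obstacle is not conceptual but one of scale. Granting the wall-crossing criterion, whose only subtlety is that local separation together with the sphere property already forces completeness, nothing remains but determinants and sign tests. However, the dimension $2n$, the number of $2$-triangulations, and the number of flips all grow quickly, so the exhaustive check is feasible only up to $n=8$. I do not expect a closed-form argument covering all $n$ at once: that would require controlling the signs of the dependence coefficients uniformly across every flip, and it is precisely the absence of such control that leaves the realization conjectural for larger $n$, as phrased in Question~\ref{que:fanRealizations}.
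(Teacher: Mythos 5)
Your overall strategy coincides with the paper's: Theorem~\ref{thm:fanRealizations} is proved by plugging the explicit rays into the standard characterization of complete simplicial fans and running the resulting finite battery of determinant and sign tests in Sagemath for each $n\in[8]$, exactly as you describe. The gap is in the criterion itself, and it sits precisely in the sentence where you claim the ``only subtlety is that local separation together with the sphere property already forces completeness.'' That implication is false. Conditions (i) and (ii) alone --- every facet a basis, every wall crossed with matching signs --- only make the induced map from the simplicial sphere to the unit sphere of $\Rmbb^{d}$ a local homeomorphism, hence a covering map; they do not control its degree. A two-dimensional counterexample: take the $6$-cycle (a simplicial sphere of dimension $1$) and assign its vertices, in cyclic order, the directions $0^\circ, 130^\circ, 260^\circ, 30^\circ, 160^\circ, 290^\circ$ in $\Rmbb^2$. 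Every consecutive pair spans an angle less than $180^\circ$, so every facet is a basis, and at every ridge the two outer rays lie on opposite sides of the line through the shared ray, so every sign condition holds; yet the six cones wind twice around the origin, overlap pairwise, and do not form a fan. This is exactly why condition~(1) of Proposition~\ref{prop:characterizationSimplicialFans} is not merely ``some facet is a basis'' but the stronger, non-local requirement that some facet~$\facet$ has $\mathbf{V}_{\facet}$ a basis \emph{and} has its open cone disjoint from the open cones of all other facets: that single global check pins the covering degree to one, and only then do the local wall conditions upgrade to completeness.

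The repair is cheap but must be made explicit: replace your condition (i) by condition (1) of Proposition~\ref{prop:characterizationSimplicialFans} and add the corresponding computation (for instance, certify that a generic point lies in the open cone of exactly one facet). With that amendment your plan is the paper's proof: identify $\multiassociahedron_{2,n}$ with the subword complex $\subwordComplex(\sq{c}^2\sq{w}_\circ(\sq{c}))$ via Theorem~\ref{thm:identificationMultiassociahedronSubwordComplex}, use that it is a sphere of dimension $2n-1$ so each ridge lies in exactly two facets, and verify the two conditions flip by flip. Your closing explanation of why no uniform-in-$n$ argument is available also matches the paper's reason for leaving Question~\ref{que:fanRealizations} as a question.
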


\begin{question}
\label{que:fanRealizations}
Are the vectors~$\ray$ the rays of a complete simplicial fan in~$\Rmbb^{2n}$ which realizes the multiassociahedron~$\multiassociahedron_{2,n}$ for any~$n\ge1$?
\end{question}

\noindent We explain later why we state Question~\ref{que:fanRealizations} as a question rather than as a conjecture.

\begin{figure}
\centerline{\includegraphics[width=.82\textwidth]{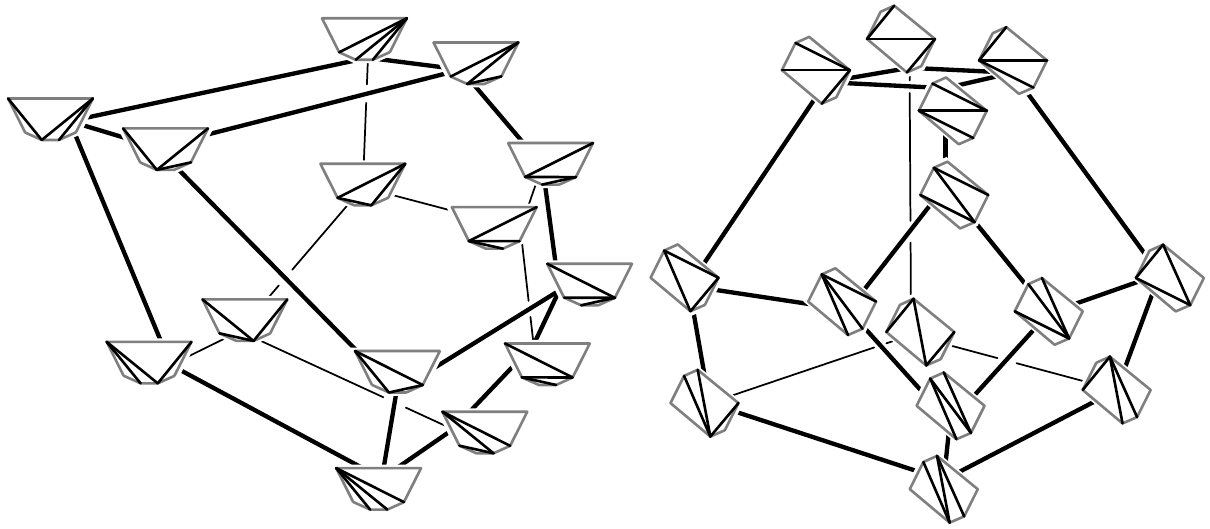}}
\vspace{-.2cm}
\caption{Two realizations of the~$3$-dimensional classical associahedron by C.~Hohlweg and C.~Lange~\cite{HohlwegLange} (see also Figure~\ref{fig:associahedraCeballosSantosZiegler} left). The left one is initially due to J.-L.~Loday~\cite{Loday}. Figure from~\cite{LangePilaud}, with permission.}
\vspace{-.3cm}
\label{fig:associahedraLodayHohlwegLange}
\end{figure}

Theorem~\ref{thm:fanRealizations} was checked computationally using the characterization of complete simplicial fans of Proposition~\ref{prop:characterizationSimplicialFans}. The rest of the paper will therefore mostly be a report on the heuristic process leading to the candidate rays. We describe in Section~\ref{sec:preliminaries} the notions and properties about simplicial complexes, polyhedral geometry and subword complexes that we need. In Section~\ref{sec:associahedra} we obtain by a new method the realization of the associahedron by J.-L.~Loday~\cite{Loday}. This method is the starting point of our heuristic construction of~$2$-associahedra as fans, which is presented in Section~\ref{sec:2associahedra}. Finally we briefly discuss some further aspects of our work in Section~\ref{sec:discussion}.

\section{Preliminaries}
\label{sec:preliminaries}

Our work relies on the interpretation of multiassociahedra as type~$A$ subword complexes, as stated in Theorem~\ref{thm:identificationMultiassociahedronSubwordComplex} (see Section~\ref{subsec:subwordComplexes}). Our main tools will be combinatorial operations on them studied by M.~Gorsky in~\cite{Gorsky-nilHeckeMoves,Gorsky-braidMoves}, and that we will try to translate geometrically. We present all the notions we need on simplicial complexes, polyhedral geometry and subword complexes in Sections~\ref{subsec:simplicialComplexes},~\ref{subsec:polyhedralGeometry} and~\ref{subsec:subwordComplexes} respectively. The reader familiar with them can proceed directly with Section~\ref{subsec:operationsSubwordComplexes}.

\subsection{Simplicial complexes}
\label{subsec:simplicialComplexes}

Given a finite set~$\vertices$, a~\defn{simplicial complex} (or a~\defn{complex})~$\complex$ on~$\vertices$ is a subset of the power set of~$\vertices$ closed under taking subsets:~${\complex\subseteq2^{\vertices}}$ and~$\face\subseteq\face[g]\in\complex\implies \face\in\complex$. Usually one requires~$\complex$ to contain all singletons. The elements of~$\vertices$, and by extension the corresponding singletons, are the~\defn{vertices} of~$\complex$. The pairs in~$\complex$ are the~\defn{edges} of~$\complex$ and form together with the vertices a graph called the~\defn{$1$-skeleton} of~$\complex$. The elements of~$\complex$ are its~\defn{faces}, the inclusion-maximal of which are called~\defn{facets}. We will always describe any explicit complex by its list of facets, which is equivalent to the whole data. We will moreover denote a complex whose single facet is an edge~$\{x,y\}$ directly by~$xy$, and we will use the notation~$x$ both for the vertex~$x$ and for the singleton~$\{x\}$. If~$\complex=2^{\vertices}$, then~$\complex$ is called a~\defn{simplex}. In particular any face of a simplicial complex is the unique facet of a simplex, therefore the faces of~$\complex$ are also called the simplices of~$\complex$. The~\defn{dimension} of a face~$\face\in\complex$ is the quantity~$\dim(f)\eqdef|\face|-1$ while the dimension of~$\complex$ is~$\dim(\complex)\eqdef\max_{\face\in\complex}\dim(\face)$. The complex~$\complex$ is~\defn{pure} if all its facets have the same cardinality~$d+1\ge1$, in which case~$\complex$ is also a~$d$-complex. The faces of dimension~$(d-1)$ of a~$d$-complex are called its~\defn{ridges}. Given a face~$\face$ of~$\complex$, the~\defn{star}~$\st_{\complex}(\face)$, the~\defn{link}~$\lk_{\complex}(\face)$ and the \defn{deletion}~$\del_{\complex}(\face)$ of~$\face$ in~$\complex$ are the complexes respectively defined by
\begin{align*}
\st_{\complex}(\face)  \eqdef & \{\face'\in\complex\,|\,\face\cup \face'\in\complex\},  \\
\lk_{\complex}(\face)  \eqdef & \{\face'\in\complex\,|\,\face\cap \face'=\varnothing\text{ and }\face\cup \face'\in\complex\},  \\
\del_{\complex}(\face)  \eqdef & \{\face'\in\complex\,|\,\face\not\subseteq \face'\}.
\end{align*}

Any simplicial complex~$\complex$ can be associated to a topological space called its~\defn{topological realization}, unique up to homeomorphism, obtained by gluing topological simplices along faces given by~$\complex$. The complex~$\complex$ is a~\defn{simplicial sphere} (or just a~\defn{sphere}) if it is pure of dimension~$d$ and its topological realization is homeomorphic to the standard sphere in~$\Rmbb^{d+1}$. Any ridge of a simplicial sphere is the intersection of exactly two facets of this simplicial sphere.

We now describe the classical operations on simplicial complexes that we need. Given two complexes~$\complex_1$ and~$\complex_2$, the~\defn{join} of~$\complex_1$ and~$\complex_2$ is the complex
\[
\complex_1\join\complex_2 \eqdef \{\face\sqcup\face'\,|\,\face\in\complex_1,\face'\in\complex_2\}
\]
where the complexes~$\complex_1$ and~$\complex_2$ are considered with disjoint sets of vertices and~$\sqcup$ denotes the disjoint union. The~\defn{suspension} of a complex~$\complex$ is the join of~$\complex$ with a complex consisting in two singletons, called~\defn{suspension vertices}. Given a vertex~$x$ of~$\complex$, the~\defn{one-point-suspension} of~$\complex$ with respect to~$x$ is the complex
\[
\ops_{\complex}(x) \eqdef \big(\del_{\complex}(x)\join\{x_0,x_1\}\big) \cup \big(\lk_{\complex}(x)\join x_0x_1\big)
\]
where~$x_0$ and~$x_1$ are two new vertices, also called~\defn{suspension vertices} in~$\ops_{\complex}(x)$. This operation extends the usual suspension: in the particular case where the vertex~$x$ is only contained in the face~$\{x\}$ of~$\complex$, then we consider by convention that the right part of the union is empty and the left part is just the suspension of the complex where the disconnected vertex~$x$ has been forgotten. So the suspension of a complex is obtained by adding an artificial disconnected vertex to it and taking the one-point-suspension with respect to this vertex. Figure~\ref{fig:onePointSuspension} illustrates the one-point-suspension operation on two complexes.
For a complex~$\complex$ and a face~$\face$ of~$\complex$, the~\defn{stellar subdivision} of the face~$\face$ in~$\complex$ is the complex
\[
\stell_{\complex}(\face)\eqdef \del_{\complex}(\face) \cup \{\face'\cup\{a\}\,|\,\face\not\subseteq\face'\in\st_{\complex}(f)\} = \del_{\complex \cup \st_{\complex}(\face)\join\{a\}}(\face)
\]
where~$a$ is a new vertex, called~\defn{subdivision vertex}. Intuitively the stellar subdivision corresponds to ``putting a vertex in the middle of the face~$\face$'' and adding the faces necessary to preserve the topology of the complex (see Figure~\ref{fig:stellarSubdivision} for examples).

\begin{figure}
\centerline{\includegraphics[width=1\textwidth]{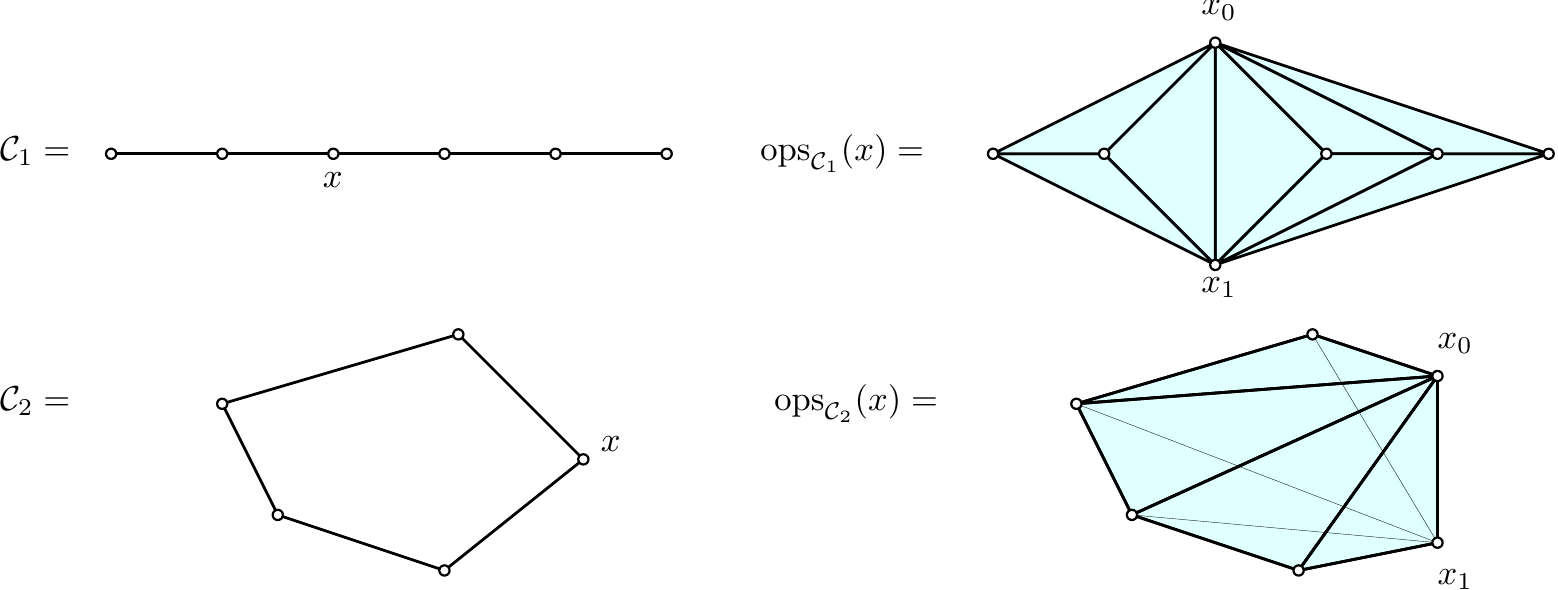}}
\caption{Two~$1$-dimensional complexes~$\complex_1$ and~$\complex_2$ (left) and their one-point-suspensions~$\ops_{\complex_1}(x)$ and~$\ops_{\complex_2}(x)$ with respect to a given vertex~$x$ (right).}
\label{fig:onePointSuspension}
\end{figure}

\begin{figure}
\centerline{\includegraphics[width=1\textwidth]{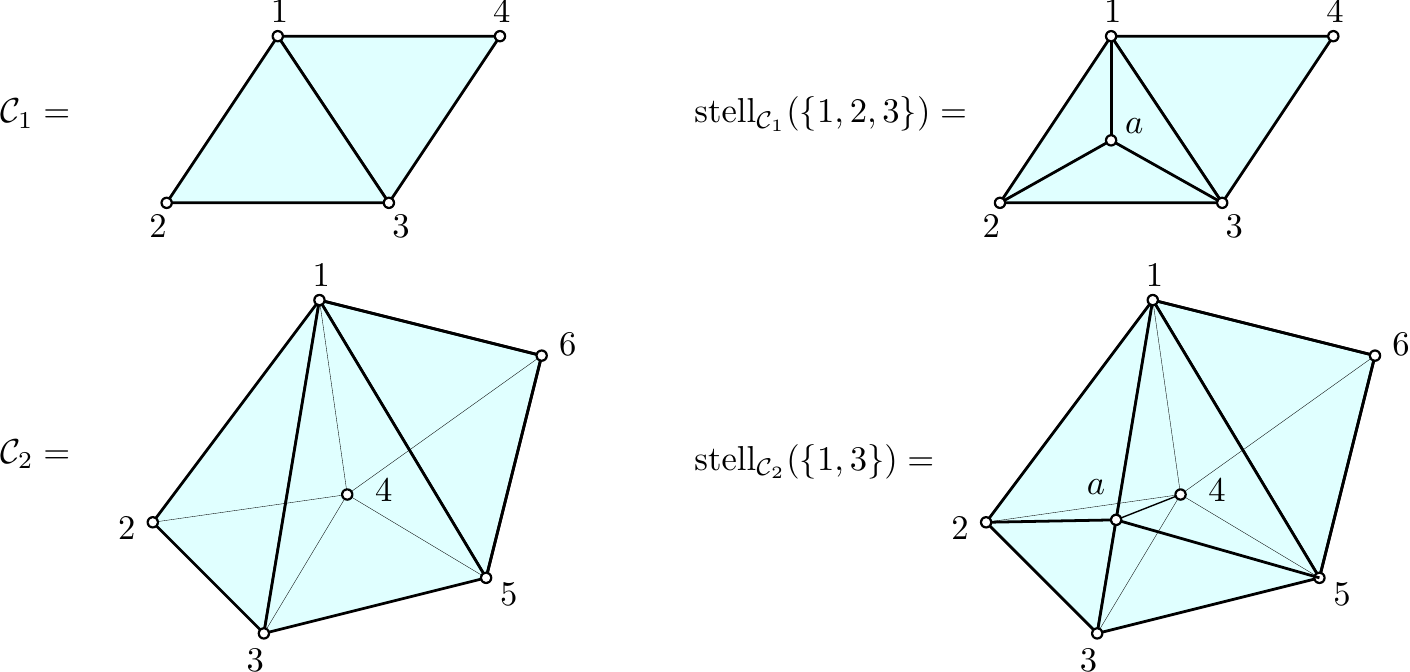}}
\caption{A~$2$-dimensional complex~$\complex_1=\{\{1,2,3\},\{1,3,4\}\}$ (top left) and the stellar subdivision of the facet~$\{1,2,3\}$ in it (top right), and a~$3$-dimensional complex~$\complex_2=\{\{1,2,3,4\},\{1,3,4,5\},\{1,4,5,6\}\}$ (bottom left) and the stellar subdivision of the edge~$\{1,3\}$ in it (bottom right).}
\label{fig:stellarSubdivision}
\end{figure}

\subsection{Polyhedral geometry}
\label{subsec:polyhedralGeometry}

We now briefly recall some notions of polyhedral geometry. We refer to the textbooks~\cite[Lecture 1]{Ziegler} and~\cite[Section 2.1.1.]{DeLoeraRambauSantos} for a complete presentation. Given a set~$\mathbf{V}$ of vectors in~$\Rmbb^n$, we will denote by~$\cone\mathbf{V}$ the positive span of~$\mathbf{V}$ in~$\Rmbb^n$. A~\defn{polyhedral cone} (or just a~\defn{cone}) is a subset of~$\Rmbb^n$ equivalently defined as the positive span of finitely many vectors or as the intersection of finitely many closed linear halfspaces. The~\defn{dimension} of a cone is the dimension of its linear span. The~\defn{faces} of a cone are its intersections with its~\defn{supporting hyperplanes}, that is the linear hyperplanes that do not strictly separate two of its elements. Faces of a cone still are cones and the~$1$-dimensional faces of a cone are its~$\defn{rays}$. A cone is~\defn{simplicial} if it is generated by an independent set of vectors. A simplicial cone is generated by its rays and any subset of rays generates a face.

A~\defn{(polyhedral) fan} is a set of cones closed by taking faces and such that any two of them intersect in a common face. The full dimensional faces of the fan are its~\defn{facets}. A fan is~\defn{simplicial} if all its cones are, and it is~\defn{complete} if the union of its cones covers the whole space~$\Rmbb^n$. A simplicial fan~$\fan$ can naturally be seen as an abstract simplicial complex~$\complex_{\fan}$ whose vertices are the rays of its cones and whose faces are the subsets of rays generating the cones of~$\fan$. An abstract simplicial complex~$\complex$ is then~\defn{realizable as a fan} (or a~\defn{geodesic sphere}) if there exists a complete simplicial fan~$\fan$ such that~$\complex$ is isomorphic to~$\complex_{\fan}$. The fan~$\fan$ is then called a~\defn{fan realization} of the complex~$\complex$. To realize a complex as a complete simplicial fan, we only need to find suitable coordinates for the rays corresponding to its vertices. These vectors then support a complete simplicial fan if a certain condition on~\defn{adjacent} facets (that is which differ by a single element of the complex) is satisfied.

\begin{proposition}[{see \textit{e.g.} \cite[Corollary 4.5.20.]{DeLoeraRambauSantos}}]
\label{prop:characterizationSimplicialFans}
Let~$\complex$ be a sphere with vertex set~$\vertices$ and~$\mathbf{V} \eqdef (\mathbf{v}_x)_{x \in \vertices}$ be a set of vectors in~$\Rmbb^n$. The set of cones~$\left\{\cone\mathbf{V}_{\face}\,|\,\face \in \complex\right\}$, where~$\mathbf{V}_{\face} \eqdef \left\{\mathbf{v}_x\,|\,x \in \face\right\}$, forms a complete simplicial fan if and~only~if
\begin{enumerate}
\item there exists a facet~$\facet$ of~$\complex$ such that~$\mathbf{V}_{\facet}$ is a basis of~$\Rmbb^n$ and such that the open cones~$\Rmbb_{> 0}\mathbf{V}_{\facet}$ and~$\Rmbb_{> 0}\mathbf{V}_{\facet'}$ are disjoint for any other facet~$\facet'$ of~$\complex$;
\item for any two adjacent facets~$\facet, \facet'$ of~$\complex$ with~$\facet \ssm \{x\} = \facet' \ssm \{x'\}$, the coefficients~$\alpha, \alpha'$ in the unique (up to rescaling) linear dependence
\[
\alpha \, \mathbf{v}_x + \alpha' \, \mathbf{v}_{x'} + \sum_{y \in \facet \cap \facet'} \beta_y \, \mathbf{v}_y = 0
\]
on~$\mathbf{V}_{\facet \cup \facet'}$ have the same sign (different from~$0$).
\end{enumerate}
\end{proposition}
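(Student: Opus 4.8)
The plan is to verify the two conditions of Proposition~\ref{prop:characterizationSimplicialFans} directly, taking the abstract sphere $\complex$ to be $\multiassociahedron_{2,n}$ and the candidate vectors $\mathbf{V}$ to be the family $(\ray)$ listed in a)--e). First I would record the structural data needed to even apply the proposition. The complex $\multiassociahedron_{2,n}$ is a simplicial sphere of dimension $2n-1$ by~\cite{Jonsson-generalizedTriangulations}, so each of its facets---the $2$-triangulations---consists of exactly $2n$ many $2$-relevant diagonals, matching the ambient dimension of $\Rmbb^{2n}$; moreover the five families a)--e) assign precisely one vector to each $2$-relevant diagonal, so $\mathbf{V}$ is indeed indexed by the vertex set $\vertices$ of the complex. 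Because the statement is restricted to $n\in[8]$, the complex is finite for each such $n$, and both conditions of the proposition reduce to finite, exact computations.

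To set up the computation I would enumerate the facets of $\multiassociahedron_{2,n}$ and their adjacencies through the identification with a type~$A$ subword complex provided by Theorem~\ref{thm:identificationMultiassociahedronSubwordComplex}; this produces the list of $2$-triangulations together with, for each internal ridge $\ridge$, the two facets $\facet,\facet'$ containing it and the two diagonals $x,x'$ in which they differ. Since every entry of every $\ray$ is an integer, all of the linear algebra can be carried out over $\Qmbb$ with no rounding. For condition~(2) I would then, for each adjacent pair $\facet,\facet'$ with $\facet\ssm\{x\}=\facet'\ssm\{x'\}$, solve for the unique (up to scale) linear dependence
\[
\alpha\,\ray[x] + \alpha'\,\ray[x'] + \sum_{y\in\facet\cap\facet'}\beta_y\,\ray[y] = 0
\]
on the $2n+1$ vectors $\mathbf{V}_{\facet\cup\facet'}$, and check that $\alpha$ and $\alpha'$ are both nonzero and of the same sign (here $\ray[x]$ denotes the vector attached to the diagonal $x$).

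For condition~(1) I would fix a convenient reference facet $\facet_0$, verify that $\mathbf{V}_{\facet_0}$ has nonzero determinant so that it is a basis of $\Rmbb^{2n}$, and then confirm that the open cone $\Rmbb_{>0}\mathbf{V}_{\facet_0}$ meets no other open cone $\Rmbb_{>0}\mathbf{V}_{\facet'}$---a finite check, since there are finitely many facets, performed by a sign analysis of the coordinates of the $\ray$ in the basis $\mathbf{V}_{\facet_0}$. Once both conditions are certified, Proposition~\ref{prop:characterizationSimplicialFans} guarantees that $\{\cone\mathbf{V}_{\face}\mid\face\in\multiassociahedron_{2,n}\}$ is a complete simplicial fan isomorphic to $\multiassociahedron_{2,n}$, which is exactly the assertion of the theorem.

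The main obstacle is one of scale rather than of mathematical content: the number of $2$-triangulations---hence the number of ridges over which condition~(2) must be verified---grows very fast with $n$, so although the certification is in principle a finite exact computation for every $n$, it is only feasible for small values. This is precisely why the theorem is stated for $n\in[8]$ and why the general case is left open as Question~\ref{que:fanRealizations}; all the verifications reported here were executed in Sagemath~\cite{sage} with exact rational arithmetic.
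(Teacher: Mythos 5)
Your proposal does not prove the statement in question. The statement is Proposition~\ref{prop:characterizationSimplicialFans} itself: an if-and-only-if characterization, valid for an \emph{arbitrary} simplicial sphere~$\complex$ and an \emph{arbitrary} family of vectors~$\mathbf{V}$ in~$\Rmbb^n$, of when the cones~$\cone\mathbf{V}_{\face}$ form a complete simplicial fan. What you have written is instead a verification scheme for Theorem~\ref{thm:fanRealizations}: you take the proposition as given, specialize~$\complex$ to~$\multiassociahedron_{2,n}$ and~$\mathbf{V}$ to the vectors~$\ray$ from the introduction, and explain how conditions~(1) and~(2) can be checked by exact computation. Relative to the actual target this is circular: the equivalence you are supposed to establish is invoked as a black box, and nothing in your argument bears on why conditions~(1) and~(2) are necessary or, much harder, sufficient for the cones to form a complete simplicial fan. (For the record, the paper does not prove this proposition either; it quotes it from \cite[Corollary~4.5.20.]{DeLoeraRambauSantos}, and the computational check you describe is essentially how the paper establishes Theorem~\ref{thm:fanRealizations}.)

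A genuine proof would need both implications. The forward direction is routine: in a complete simplicial fan the facet cones are full-dimensional with pairwise disjoint interiors, which gives~(1), and two cones sharing a ridge lie on opposite sides of the hyperplane spanned by that ridge, which is exactly the sign condition~(2) on the unique linear dependence among the vectors of~$\mathbf{V}_{\facet\cup\facet'}$. The substantial direction is the converse: one must show that the purely local conditions~(1) and~(2) force the cones both to cover~$\Rmbb^n$ and to intersect pairwise in common faces. The standard argument maps the topological realization of~$\complex$ radially to the unit sphere of~$\Rmbb^n$, uses~(2) to see that this map is a local homeomorphism across each ridge (adjacent facet cones are glued along their common ridge cone without folding back), and then uses~(1) together with the pseudomanifold connectedness of the sphere~$\complex$ to conclude that the map has degree one, i.e.\ that the facet cones tile~$\Rmbb^n$; none of this appears in your proposal.
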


Realizing a complex as a complete simplicial fan is in fact weaker than as a polytope, but we will skip details on that part since this paper does not deal with polytopal realizations. We conclude this section with some realizability results.

\begin{lemma}[folklore]
\label{lem:realizability}
One-point-suspensions and stellar subdivisions of simplicial complexes realizable as polytopes (resp. complete simplicial fans, resp. spheres) still are realizable as simplicial polytopes (resp. complete simplicial fans, resp. spheres).
\end{lemma}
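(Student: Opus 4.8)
The plan is to prove Lemma~\ref{lem:realizability} by treating the three notions of realizability (polytopes, complete simplicial fans, spheres) in parallel, and the two operations (one-point-suspension, stellar subdivision) separately. Since one-point-suspension extends the ordinary suspension, and stellar subdivision of a face~$\face$ can be rephrased purely in terms of links and deletions, I would reduce everything to explicit constructions on the geometric data (vertices, normal vectors, or ray vectors) and verify that the combinatorial structure of the output complex matches~$\ops_{\complex}(x)$ or~$\stell_{\complex}(\face)$ as defined in Section~\ref{subsec:simplicialComplexes}. The statement is labelled folklore, so the point is not novelty but to pin down a construction that works uniformly; I would cite a standard reference for the sphere case (the topological invariance of these operations) and give the geometric constructions for the polytope and fan cases.

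For one-point-suspensions, the key geometric idea is that the one-point-suspension of~$\complex$ with respect to a vertex~$x$ is combinatorially a \emph{pulling} or \emph{prism-like} construction: starting from a realization of~$\complex$ in~$\Rmbb^n$ with~$x$ sent to a vector~$\mathbf{v}_x$, I would place the two new vertices~$x_0,x_1$ at~$\mathbf{v}_x \pm \mathbf{e}_{n+1}$ in~$\Rmbb^{n+1}$ (after embedding the realization of~$\complex$ in the hyperplane~$\{z_{n+1}=0\}$), so that the segment~$[x_0,x_1]$ straddles the original position of~$x$. First I would check that the faces of the resulting object are exactly~$\del_{\complex}(x)\join\{x_0,x_1\}$ together with~$\lk_{\complex}(x)\join x_0x_1$, which is the defining formula. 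For the polytope case this is the standard one-point-suspension (also called a \emph{wedge} on the dual side) and preserves simpliciality; for the fan case the same lift of the ray vectors into one extra dimension produces a complete simplicial fan, which can be checked against the criterion of Proposition~\ref{prop:characterizationSimplicialFans}; for the sphere case the operation is a PL homeomorphism onto a sphere of one higher dimension.

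For stellar subdivisions, the relevant fact is that~$\stell_{\complex}(\face)$ is obtained by placing a new vertex~$a$ at an interior point of the face~$\face$ and coning. Geometrically, in a polytopal realization I would take~$\mathbf{v}_a$ to be a point just beyond the barycenter of the vertices of~$\face$, pushed slightly outside the corresponding face of the polytope (a \emph{pulling} of the face), and argue that the boundary complex of the resulting polytope is exactly~$\del_{\complex}(\face)\cup\{\face'\cup\{a\}\,|\,\face\not\subseteq\face'\in\st_{\complex}(\face)\}$. The fan version is the analogous statement with~$\mathbf{v}_a$ chosen inside the relative interior of the cone~$\cone\mathbf{V}_{\face}$; subdividing that cone by the new ray keeps all cones simplicial, covers the same region of space, and so yields a complete simplicial fan, again verifiable through Proposition~\ref{prop:characterizationSimplicialFans}. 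The sphere case follows since stellar subdivision is a classical operation preserving the PL homeomorphism type.

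The main obstacle, and the only place requiring genuine care, is the fan case rather than the polytope or sphere cases: one must choose the lift~$\mathbf{e}_{n+1}$-coordinate (for the suspension) and the interior ray~$\mathbf{v}_a$ (for the subdivision) and then verify the sign condition of Proposition~\ref{prop:characterizationSimplicialFans}(2) for every pair of adjacent facets created by the operation. For the one-point-suspension the new adjacencies all involve the pair~$\{x_0,x_1\}$ and reduce to the original adjacencies of~$\complex$ together with the trivial dependence coming from the symmetric~$\pm\mathbf{e}_{n+1}$ placement, so the signs are inherited. For the stellar subdivision the new adjacencies are those among the cones of the star of~$\face$ that are separated by the new ray~$\mathbf{v}_a$, and choosing~$\mathbf{v}_a$ strictly in the relative interior of~$\cone\mathbf{V}_{\face}$ guarantees the correct signs by a direct linear-dependence computation. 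I expect this verification to be routine but notationally heavy, which is why I would present it schematically and defer to the cited literature for the fully detailed checks.
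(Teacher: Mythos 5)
Your proposal is correct and takes essentially the same route as the paper, which in fact leaves the proof of this folklore lemma to the reader and only records the explicit geometric transformations: for the fan case these are precisely your choices, namely a new ray $\alpha_1\mathbf{v}_1+\dots+\alpha_\ell\mathbf{v}_\ell$ with all $\alpha_i>0$ in the relative interior of the subdivided cone, and the lift $\mathbf{v}\oplus\alpha\e_{n+1}$, $\mathbf{v}\oplus\beta\e_{n+1}$ with $\alpha\beta<0$ for the one-point-suspension. The only minor quibble is terminological: the polytopal construction of a stellar subdivision is usually called placing a point \emph{beyond} the face rather than \emph{pulling} it, but the geometric content of your construction is the standard and correct one.
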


Lemma~\ref{lem:realizability} is classical and its proof is left to the reader. We only describe the actual transformation on the rays of a complete simplicial fan allowing to realize as well the stellar subdivisions and the one-point-suspensions of the corresponding complex. We will indeed need them to derive the coordinates of Question~\ref{que:fanRealizations}. Let~$\fan$ be a complete simplicial fan in~$\Rmbb^n$ and let~$\rays$ be a set of vectors in~$\Rmbb^n$ such that for each ray~$\rho$ of~$\fan$ there is exactly one vector~$\ray[]\in\rays$ such that~$\rho=\cone\ray[]$. The vertex set of the simplicial complex~$\complex_\fan$ associated to the fan~$\fan$ can then naturally be identified with the set~$\rays$. Let~$\face=\{\ray[1],\dots,\ray[\ell]\}$ be a face of the complex~$\complex_\fan$. Then the complex~$\stell_{\complex_\fan}(\face)$ can be realized as a complete simplicial fan by adding a ray to the fan~$\fan$, generated by any vector of the form~$\alpha_1\ray[1]+\dots+\alpha_\ell\ray[\ell]$ with~$\alpha_1>0,\dots,\alpha_\ell>0$. This new ray corresponds to the subdivision vertex of the stellar subdivision. The generic choice consists to set all~$\alpha_i$'s equal to~$1$ (see Figure~\ref{fig:operationsOnFans}). Let~$\ray[]$ be a vector in~$\rays$, then the complex~$\ops_{\complex_\fan}(\ray[])$ is of dimension one more than the complex~$\complex_\fan$. We consider the vector space~$\Rmbb^{n+1}\eqdef\Rmbb^n\oplus\Rmbb\e_{n+1}$ and associate to a vector~$\ray[]'\in\rays\ssm\{\ray[]\}$ the vector~$\ray[]'\oplus\mathbf{0}$. The suspensions vertices obtained from~$\ray[]$ are associated to two vectors~$\ray[]\oplus\alpha\e_{n+1}$ and~$\ray[]\oplus\beta\e_{n+1}$, with~$\alpha\beta<0$. The generic choice for us will be~$\alpha=-1$ and~$\beta=1$. The set of rays that we obtain in~$\Rmbb^{n+1}$ then supports a complete simplicial fan realizing~$\ops_{\complex_\fan}(\ray[])$ (see Figure~\ref{fig:operationsOnFans}). In the particular case of a suspension, one can artificially add the zero vector~$\mathbf{0}$ to the set~$\rays$ and choose~$\ray[]=\mathbf{0}$  in the previous construction (see Figure~\ref{fig:operationsOnFans}). The previous descriptions give valid coordinates but certainly not all of them. Yet these realizations are easy to implement and they will be enough for our purposes.

\begin{figure}
\centerline{\hspace{2cm}\begin{overpic}[width=1.2\textwidth]{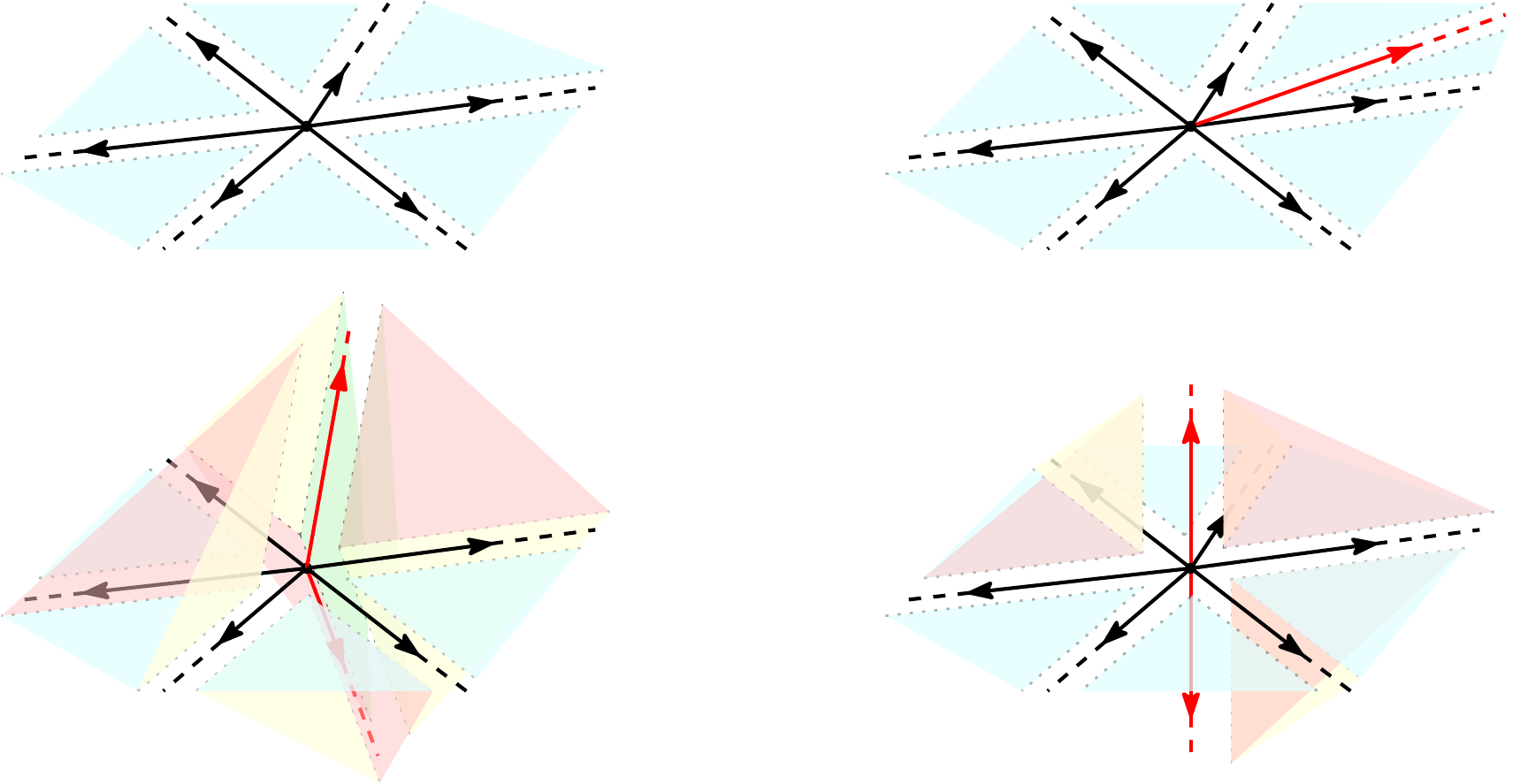}\put(87,205){$\ray[1]$}\put(113,182){$\ray[2]$}\put(115,203){$\face$}\put(-40,180){$\fan=$}\put(-55,60){$\ops_{\complex_\fan}(\ray[1])\cong$}\put(193,180){$\stell_{\complex_\fan}(\face)\cong$}\put(185,60){$\complex_\fan\join\{u_1,u_2\}\cong$}\end{overpic}}
\caption{A complete~$2$-dimensional simplicial fan~$\fan$ with a distinguished face~$\face=\cone\{\ray[1],\ray[2]\}$ (top left), a complete~$2$-dimensional simplicial fan realizing the complex~$\stell_{\complex_\fan}(\face)$ (top right), a complete~$3$-dimensional simplicial fan realizing the complex~$\ops_{\complex_\fan}(\ray[1])$ (bottom left), and a complete~$3$-dimensional simplicial fan realizing the suspension of the complex~$\complex_\fan$ (bottom right). The new fans are obtained by the generic transformations on~$\fan$ described after Lemma~\ref{lem:realizability}.}
\label{fig:operationsOnFans}
\end{figure}

\subsection{Subword complexes}
\label{subsec:subwordComplexes}

For~$n\ge1$, we denote the symmetric group of permutations of~$[n+1]$ by~$\symmetricGroup_{n+1}$, and by~$S$ the set of simple transpositions~$s_i\eqdef(i\,\,\,i+1)$ (for~$i\le n$), that we consider as an alphabet. To avoid confusion, a simple transposition will be referred to with an italic letter~$s_i$ when considered as an element of~$\symmetricGroup_{n+1}$, and with a san serif letter~$\sq{s}_i$ when considered as a letter in~$S$. Since simple transpositions generate~$\symmetricGroup_{n+1}$, any permutation~$\permutation\in\symmetricGroup_{n+1}$ can be written as a product~$\permutation =s_{i_1}\cdots s_{i_\ell}$. The word~$\sq{s}_{i_1}\dots\sq{s}_{i_\ell}$ is then called an~\defn{expression} of the permutation~$\permutation$. It is a~\defn{reduced expression} of~$\permutation$ if~$\ell$ is smallest possible among all expressions, in which case~$\ell$ is called the~\defn{length} of~$\permutation$. We denote by~$w_\circ\eqdef[n+1,\,n,\dots, 1]$ the unique longest element in~$\symmetricGroup_{n+1}$, also referred to as the~\defn{maximal permutation}.

Given a word~$\word=\sq{q}_{1}\dots\sq{q}_{p}$ in~$S^*$, a~\defn{subword} of~$\word$ is a subsequence~$\sq{q}_{i_1}\dots\sq{q}_{i_r}$ ($1\le i_1< \dots < i_r\le p$) of its letters. A~\defn{factor} of~$\word$ is a subword of~$\word$ consisting of consecutive letters and a~\defn{prefix} (resp.~\defn{suffix}) of~$\word$ is a factor containing its first (resp. last) letter. For any set~$J\subseteq[p]$, we denote by~$\word_{J}$ the subword of~$\word$ consisting of the letters with index in~$J$. If~$\word$ contains a reduced expression of~$w_\circ$ as a subword, the~\defn{subword complex}~$\subwordComplex(\word)$ (see~\cite{KnutsonMiller-subwordComplexesCoxeterGroups}) is the simplicial complex defined by\footnote{This is in fact a definition of spherical subword complexes of type~$A_n$.}
\[
\subwordComplex(\word)\eqdef\{J\subseteq[p]\,|\,\word_{[p]\ssm J}\text{ is a reduced expression of }w_\circ\}.
\]

We always consider a letter~$\sq{q}_r$ in a word~$\word$ as both data of its~\defn{position}~$r$ in~$\word$ and of the actual letter~$\sq{s}_i$ in the alphabet~$S$ such that~$\sq{q}_r=\sq{s}_i$. We identify the~\defn{vertices} of the subword complex~$\subwordComplex(\word)$ to the letters of~$\word$ whose position is contained in a facet of the complex and denote their set by~$\vertices_{\word}$. Observe that the vertices of~$\subwordComplex(\word)$ are the letters of~$\word$ which are not contained in all reduced expressions of~$w_\circ$ contained in~$\word$. The other letters of the word~$\word$ are the~\defn{non-vertices} of~$\subwordComplex(\word)$.
A convenient way to think about a subword complex~$\subwordComplex(\word)$ consists in encoding the underlying word~$\word=\sq{q}_1\dots\sq{q}_p$ with a set of segments~$\network_{\word}\eqdef\{\segment_r\,|\,r\in[p]\}$ called its corresponding~\defn{sorting network}~\cite{PilaudPocchiola,PilaudSantos-brickPolytope}. Each letter~$\sq{q}_r=\sq{s}_i$ ($r\in[p],i\in[n]$) is represented by a vertical segment~$\segment_r$ whose extremities have respective~$y$-coordinate~$i$ and~$i+1$ in~$\Rmbb^2$. Moreover if two letters~$\sq{q}_r=\sq{s}_i$ and~$\sq{q}_{r'}=\sq{s}_j$ (with~$r<r'$) satisfy~$|i-j|\le1$, then the points in~$\segment_{r'}$ have greater~$x$-coordinate than these in~$\segment_r$ (see Figure~\ref{fig:subwordComplexSortingNetwork} for an example). Our definition does not determine the segments, but we consider the sets of segments satisfying the conditions up to combinatorial equivalence and call any of them~\emph{the} sorting network of~$\subwordComplex(\word)$. The main property of the sorting network~$\network_{\word}$ is that it fully describes the combinatorics of the subword complex~$\subwordComplex(\word)$ even if it does not allow to recover the initial order on the letters of~$\word$. Indeed since~$s_i$ and~$s_j$ commute when~$|i-j|\ge2$, if~$\word'$ is obtained from~$\word$ by replacing a factor~$\sq{s}_i\sq{s}_j$ by~$\sq{s}_j\sq{s}_i$, then the subword complexes~$\subwordComplex(\word)$ and~$\subwordComplex(\word')$ clearly are isomorphic.

\begin{figure}
\centerline{\includegraphics[width=\textwidth]{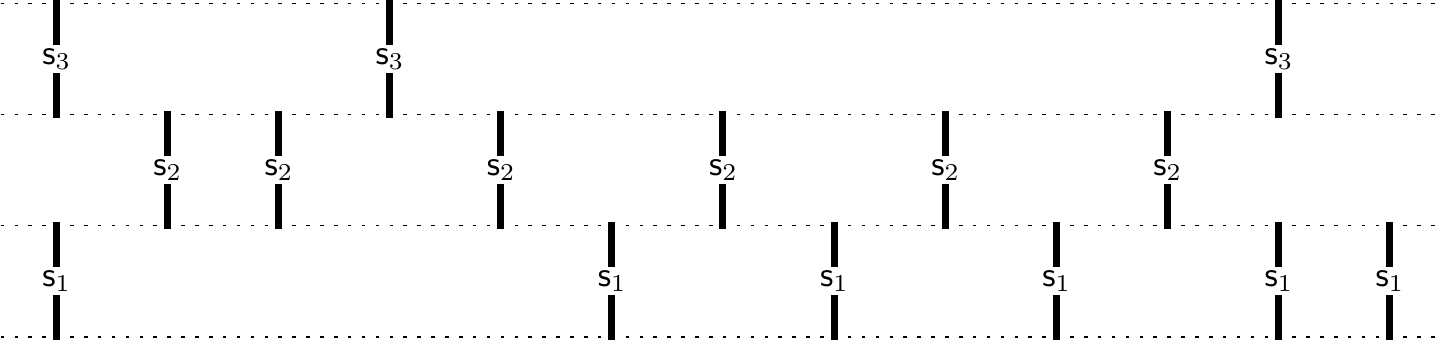}}
\caption{The sorting network~$\network_{\word}$ of the subword complex~$\subwordComplex(\word)$ for~$n=3$ and~$\word = \sq{s}_1\sq{s}_3\sq{s}_2\sq{s}_2\sq{s}_3\sq{s}_2\sq{s}_1\sq{s}_2\sq{s}_1\sq{s}_2\sq{s}_1\sq{s}_2\sq{s}_3\sq{s}_1\sq{s}_1$.}
\label{fig:subwordComplexSortingNetwork}
\end{figure}

Let~$\sq{c}\eqdef\sq{s}_1\dots\sq{s}_n$, the~\defn{$\sq{c}$-sorted expression} of~$w_\circ$ (see~\cite{Reading-cambrianLattices}) is the word
\begin{equation}
\sq{w}_\circ(\sq{c})\eqdef\prod_{i=1}^n\,\,\left(\prod_{j=1}^{n+1-i}\sq{s}_{j}\right)
\label{eq:coordinatesW}
\end{equation}

\noindent where the product denotes the concatenation on words in increasing order of indices, that is~$\sq{w}_\circ(\sq{c})=\sq{s}_1\sq{s}_2\dots\sq{s}_n\,\,\,\,\sq{s}_1\dots\sq{s}_{n-1}\,\,\,\,\dots\,\,\,\,\sq{s}_1\sq{s}_2 \,\,\,\,\sq{s}_1$. We will also use the notation~$\sq{c}[i]\eqdef\sq{s}_1\dots\sq{s}_i$, so that~$\sq{w}_\circ(\sq{c})=\prod_{i=1}^{n}\sq{c}[n+1-i]$. The $\sq{c}$-sorted expression~$\sq{w}_\circ(\sq{c})$ is a reduced expression of~$w_\circ$. It will be convenient to consider smaller symmetric groups~$\symmetricGroup_p$ (with~$p\le n+1$) as embedded in~$\symmetricGroup_{n+1}$ and still denote by~$\sq{w}_\circ(\sq{c}[p])$ the~$\sq{c}$-sorted expression of their longest element. The following statement describes how multiassociahedra arise as instances of subword complexes.

\begin{theorem}[\cite{PilaudPocchiola,Stump}]
\label{thm:identificationMultiassociahedronSubwordComplex}
For~$k\ge0$ and~$n\ge1$, the multiassociahedron~$\multiassociahedron_{k,n}$ is isomorphic to the subword complex~$\subwordComplex(\sq{c}^k\sq{w}_\circ(\sq{c}))$. Given a convex polygon~$\polygon_{n+2k+1}$ with~$n+2k+1$ vertices cyclically labeled from~$1$ to~$n+2k+1$, an isomorphism between the complexes~$\multiassociahedron_{k,n}$ and~$\subwordComplex(\sq{c}^k\sq{w}_\circ(\sq{c}))$ is given by the following identification of the~$k$-relevant diagonals of~$\polygon_{n+2k+1}$ with the letters of~$\sq{c}^k\sq{w}_\circ(\sq{c})$.
\begin{compactitem}
 \item for~$i\le k$ and~$j\in[n]$, the diagonal~$(i,i+j+k)$ is associated to the letter at position~$(i-1)n + j$ in the word~$\sq{c}^k\sq{w}_\circ(\sq{c})$, namely its~$i$-th letter~$\sq{s}_j$;
 \item for~$i+k\ge k+1$ and~$j\in[n + 1 - i]$, the diagonal~$(i+k,i+j+2k)$ is associated to the letter in position~$(k+i-1)n - (i-1)(i-2)/2 + j$ in the word~$\sq{c}^k\sq{w}_\circ(\sq{c})$, namely the letter~$\sq{s}_j$ whose indices are~$i$ and~$j$ in the factor~$\sq{w}_\circ(\sq{c})$ of the word~$\sq{c}^k\sq{w}_\circ(\sq{c})$, seen as the product in Equation~\eqref{eq:coordinatesW}.
\end{compactitem}
\end{theorem}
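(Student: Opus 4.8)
\emph{Proof proposal.} The plan is to reduce the whole statement to a correspondence between facets, and to prove that correspondence through the common model of pseudoline arrangements carried by the sorting network~$\network_\word$ of~$\word\eqdef\sq{c}^k\sq{w}_\circ(\sq{c})$. First I would treat the explicit identification of the two bullet points as bookkeeping of positions in~$\word$: the first~$kn$ letters are the~$k$ consecutive copies of~$\sq{c}=\sq{s}_1\cdots\sq{s}_n$, whose~$j$-th letter in the~$i$-th copy is~$\sq{s}_j$ and is assigned to the diagonal~$(i,i+j+k)$; inside the factor~$\sq{w}_\circ(\sq{c})=\prod_{i=1}^{n}\sq{c}[n+1-i]$ the block indexed by~$i$ has length~$n+1-i$, so its~$j$-th letter~$\sq{s}_j$ sits at position~$kn+(i-1)n-(i-1)(i-2)/2+j$ after the elementary identity~$(i-1)(n+1)-\binom{i}{2}=(i-1)n-(i-1)(i-2)/2$, which is exactly the claimed index. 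A direct count then shows that this map~$\phi$ from~$k$-relevant diagonals to letters is a bijection, the number of~$k$-relevant diagonals of the~$(n+2k+1)$-gon being equal to the length~$kn+\binom{n+1}{2}$ of~$\word$. Since both~$\multiassociahedron_{k,n}$ and~$\subwordComplex(\word)$ are pure of dimension~$kn-1$ and a pure complex is determined by its facets, it suffices to show that a set~$T$ of~$k$-relevant diagonals is a~$k$-triangulation if and only if~$\word_{[p]\ssm\phi(T)}$ is a reduced expression of~$w_\circ$.

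To prove this facet equivalence I would pass to the sorting network. Realizing~$\subwordComplex(\word)$ on~$\network_\word$, a facet~$J$ is the set of contact points of a pseudoline arrangement supported by~$\network_\word$ in which the~$n+1$ wires cross pairwise exactly once and so realize the reversal~$w_\circ$, the complementary~$\binom{n+1}{2}$ segments being the crossings. On the geometric side I would invoke the Pilaud--Pocchiola duality: a~$k$-triangulation~$T$ decomposes into~$n+1$ $k$-stars, whose dual pseudolines are supported exactly by~$\network_\word$, the~$kn$ relevant diagonals of~$T$ becoming the~$kn$ contact points of this arrangement. Matching the two, $T$ avoids~$k+1$ mutually crossing diagonals precisely when no two dual wires cross more than once, while maximality of~$T$ forces every pair of wires to cross, pinning the complementary permutation down to~$w_\circ$; tracking each contact along the network then recovers the index-to-diagonal dictionary of the bullet points. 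As a consistency check, and to fix the labelling unambiguously, I would note that each multitriangulation flip matches the unique subword-complex flip at the corresponding ridge, both flip graphs being connected.

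The hard part will be exactly this last translation, namely re-deriving the Pilaud--Pocchiola correspondence with the network orientation that produces the~$\sq{c}$-sorted word~$\sq{c}^k\sq{w}_\circ(\sq{c})$ rather than some other reduced word for~$w_\circ$: one must verify that the crossing/contact incidences of the~$k$-stars align cell by cell with the vertical segments of~$\network_\word$ in the order dictated by~$\sq{c}^k\sq{w}_\circ(\sq{c})$, and that the ``no~$k+1$ mutually crossing diagonals'' condition is equivalent to the dual being a genuine pseudoline arrangement. Everything else---the positional bookkeeping, the count of relevant diagonals, and the flip-by-flip compatibility---is routine once this geometric dictionary and its orientation are fixed.
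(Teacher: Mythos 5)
The paper does not prove this statement at all: it is imported verbatim from \cite{PilaudPocchiola,Stump}, so there is no internal proof to compare against. Judged on its own terms, your outline is a faithful reconstruction of the argument in those references. The reduction to facets is sound (a simplicial complex is determined by its facets, so purity is not even needed there), the positional arithmetic $(i-1)(n+1)-\binom{i}{2}=(i-1)n-(i-1)(i-2)/2$ is correct, and the count matches: the $(n+2k+1)$-gon has $n(n+2k+1)/2 = kn+\binom{n+1}{2}$ $k$-relevant diagonals, which is the length of $\sq{c}^k\sq{w}_\circ(\sq{c})$, while a facet on either side has $kn$ elements and its complement has $\binom{n+1}{2}=\ell(w_\circ)$ letters. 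The interpretation of facets of a type~$A$ subword complex as contact points of pseudoline arrangements supported on $\network_\word$, and of $k$-triangulations as arrangements dual to their $n+1$ $k$-star decomposition, is exactly the mechanism of the cited proof.

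That said, as a self-contained proof the proposal is incomplete at precisely the point you flag yourself: the entire mathematical content lives in the Pilaud--Pocchiola duality (that the star-dual curves of a $k$-triangulation form a pseudoline arrangement supported by \emph{this} network, with ``no $k+1$ mutual crossings'' translating to ``pairwise at most one crossing'' and maximality to ``pairwise at least one crossing''), and you invoke it rather than derive it. The orientation issue you raise is real but mild --- $\sq{c}^k\sq{w}_\circ(\sq{c})$ agrees with the network of \cite{PilaudPocchiola} up to commutation moves and the rotation map of Theorem~\ref{thm:rotationMap}, both of which the paper records as inducing isomorphisms --- so once the duality is granted, the explicit letter-to-diagonal dictionary follows by tracking cells. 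The flip-connectivity remark is a useful sanity check but does not by itself pin down the vertex bijection. In short: correct skeleton, correct bookkeeping, with the load-bearing lemma cited rather than proved --- which is defensible here, since the theorem itself is a citation in the paper.
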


The identification given in Theorem~\ref{thm:identificationMultiassociahedronSubwordComplex} is illustrated in Figure~\ref{fig:identificationMultiassociahedronSubwordComplex}. We conclude this section with some pleasant properties of subword complexes. Let~${\word=\sq{s}_{i_1}\dots\sq{s}_{i_\ell}}$ be a word, the~\defn{rotated word} of~$\word$ is the word~$\word^\rotated:=\sq{s}_{n+1-i_\ell}\sq{s}_{i_1}\,\dots\,\sq{s}_{i_{\ell-1}}$.

\begin{figure}
\centerline{\includegraphics[width=.9\textwidth]{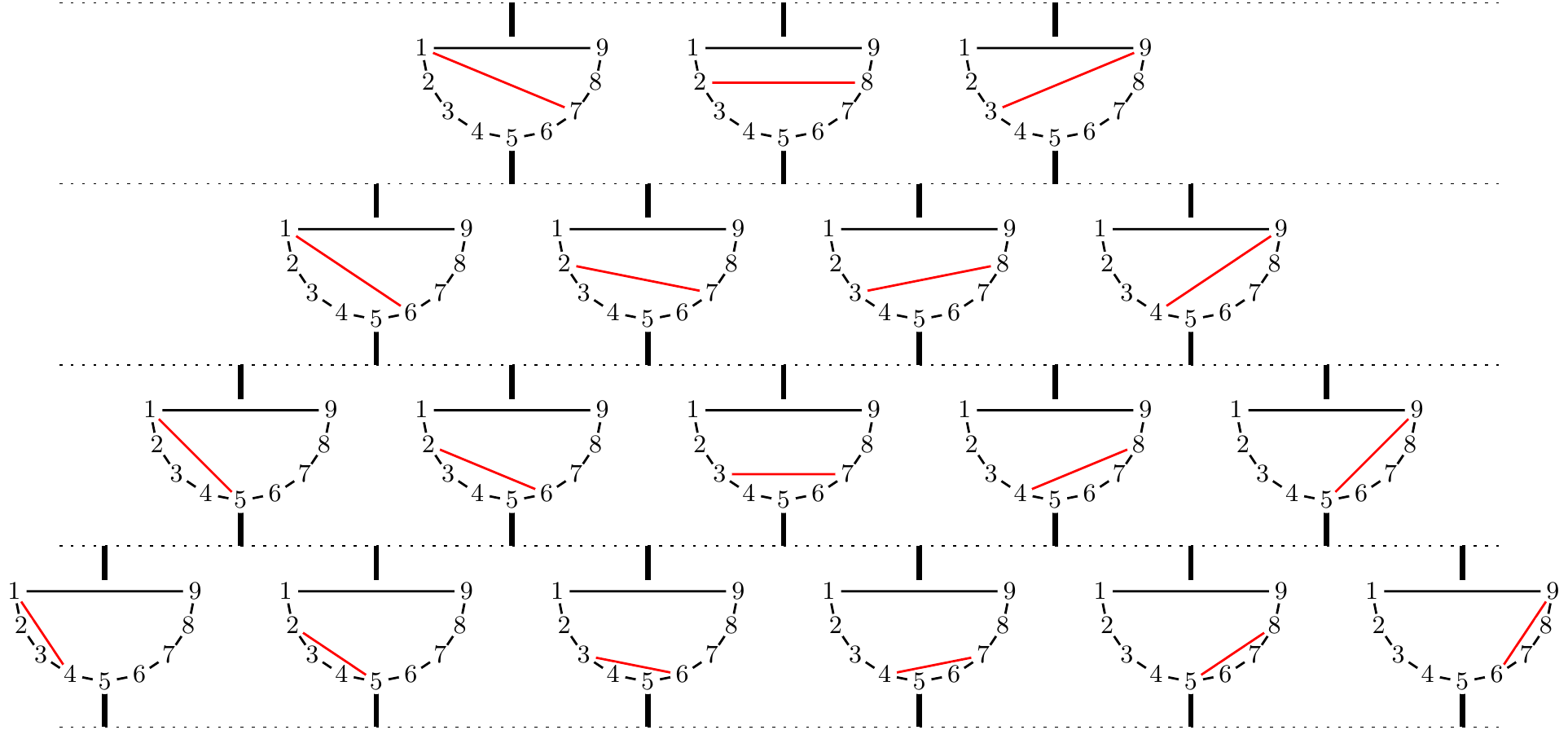}}
\caption{The identification of Theorem~\ref{thm:identificationMultiassociahedronSubwordComplex}, for~$n=4$ and~$k=2$, between the~$2$-relevant diagonals (of length at least~$2$) of a convex polygon with~$9$ vertices and the letters of the word~$\sq{c}^2\sq{w}_\circ(\sq{c})$ seen on the corresponding sorting network.}
\label{fig:identificationMultiassociahedronSubwordComplex}
\end{figure}

\begin{theorem}[rotation map~\cite{CeballosLabbeStump}]
\label{thm:rotationMap}
For any word~$\word$, the subword complexes~$\subwordComplex(\word)$ and~$\subwordComplex(\word^\rotated)$ are isomorphic. An isomorphism is obtained by identifying all letters in the common factor of~$\word$ and~$\word^\rotated$, and the two letters by which they differ.
\end{theorem}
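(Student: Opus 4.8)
The plan is to realize the stated identification as an explicit bijection on positions and to check it respects the reduced-word condition defining the subword complexes. Write $\word = \factor\,\sq{s}_{i}$, where $\factor$ is the common factor and $\sq{s}_{i}$ is the last letter of $\word$, so that $\word^\rotated = \sq{s}_{n+1-i}\,\factor$. If $\word$ has length $p$, then $\factor$ occupies positions $1,\dots,p-1$ in $\word$ and positions $2,\dots,p$ in $\word^\rotated$, while the two differing letters sit at position $p$ in $\word$ and at position $1$ in $\word^\rotated$. The prescribed identification is thus the cyclic shift $\phi\colon[p]\to[p]$ with $\phi(p)=1$ and $\phi(r)=r+1$ for $r\in[p-1]$, and I would prove that $J\mapsto\phi(J)$ is an isomorphism by showing $J\in\subwordComplex(\word)\iff\phi(J)\in\subwordComplex(\word^\rotated)$ for every $J\subseteq[p]$.

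The single algebraic input is that the relabeling $\sq{s}_i\mapsto\sq{s}_{n+1-i}$ is conjugation by $w_\circ$: since $w_\circ(j)=n+2-j$, one has $w_\circ s_i w_\circ = s_{n+1-i}$, equivalently $s_{n+1-i}\,w_\circ = w_\circ\,s_i$. First I would split on whether the last position $p$ lies in the deleted set. If $p\in J$, then the last letter of $\word$ and, since $\phi(p)=1$, the first letter of $\word^\rotated$ are both deleted; writing $K=[p-1]\setminus J$, both surviving subwords equal $\factor_K$ verbatim, so the two reduced-word conditions literally coincide. If $p\notin J$, then with $K=[p-1]\setminus J$ the surviving subwords are $\word_{[p]\setminus J}=\factor_K\,\sq{s}_i$ and $(\word^\rotated)_{[p]\setminus\phi(J)}=\sq{s}_{n+1-i}\,\factor_K$. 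Letting $u$ be the permutation spelled by $\factor_K$, I need $u\,s_i=w_\circ$ reduced if and only if $s_{n+1-i}\,u=w_\circ$ reduced. If $u\,s_i=w_\circ$ reducedly, then $u=w_\circ\,s_i$ and the conjugation identity gives $s_{n+1-i}\,u = s_{n+1-i}\,w_\circ\,s_i = w_\circ\,s_i\,s_i = w_\circ$; since a prefix of a reduced word is reduced, $\factor_K$ is a reduced word for $u$ with $\ell(u)=\ell(w_\circ)-1$, so prepending $\sq{s}_{n+1-i}$ raises the length by one and yields a reduced expression of $w_\circ$. The reverse implication is symmetric, so both faces belong to their respective complexes simultaneously.

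Combining the two cases shows $\phi$ carries faces to faces bijectively, hence is the desired simplicial isomorphism, and it identifies exactly the letters named in the statement. I expect no serious obstacle here: the proof is essentially the conjugation relation $s_{n+1-i}\,w_\circ = w_\circ\,s_i$ together with the elementary facts that prefixes of reduced words are reduced and that appending or prepending a letter to a word for an element of length $\ell(w_\circ)-1$ gives a reduced expression of $w_\circ$ precisely when it spells $w_\circ$. The only point demanding care is the bookkeeping of positions under the cyclic shift $\phi$ and the matching of ``reduced'' on both sides through the length count; once the conjugation identity is in hand, everything else is routine.
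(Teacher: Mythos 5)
Your proof is correct. Note that the paper itself gives no proof of this statement: it is quoted from Ceballos--Labb\'e--Stump \cite{CeballosLabbeStump}, so there is no in-paper argument to compare against. What you wrote is the standard argument for the rotation map, and it is sound: the cyclic shift $\phi$ on positions matches exactly the identification described in the statement, the case split on whether the last position is deleted is the right bookkeeping, and the whole content reduces to the conjugation identity $s_{n+1-i}\,w_\circ = w_\circ\,s_i$ for the longest element of $\symmetricGroup_{n+1}$ together with the length count (a word of length $\ell(w_\circ)$ spelling $w_\circ$ is automatically reduced). The only implicit point --- that $\word^\rotated$ again contains a reduced expression of $w_\circ$, so that $\subwordComplex(\word^\rotated)$ is defined --- follows from your forward implication applied to any facet of $\subwordComplex(\word)$, so nothing is missing.
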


Let~$\word\transpose$ denote the mirror image of a word~$\word$ and~$\word^{-i}$ the concatenation of~$i$ copies of~$\word\transpose$. Theorems~\ref{thm:identificationMultiassociahedronSubwordComplex} and~\ref{thm:rotationMap} imply that the multiassociahedron is isomorphic to all complexes of the form~$\subwordComplex(\sq{c}^{k-i}\sq{w}_\circ(\sq{c})\sq{c}^{-i})$ (for~$i\in[k]$). Finally basic properties of sorting networks imply that any subword complex~$\subwordComplex(\word)$ is isomorphic to~$\subwordComplex(\word\transpose)$ with identification of the vertices given by the mirror symmetry (see~\cite{PilaudPocchiola,PilaudSantos-brickPolytope}).

\subsection{Operations on subword complexes}
\label{subsec:operationsSubwordComplexes}

We now focus on three natural operations on subword complexes. A word~$\word$ is~\defn{obtained by a commutation move} from a word~$\word'$ if there exists two words~$\factor_1,\factor_2\in S^*$ and~$i,j\in[n]$ such that~$|i-j|\ge2$ and~$\word = \factor_1\, \sq{s}_i\sq{s}_j\, \factor_2$ and~$\word' = \factor_1 \,\sq{s}_j\sq{s}_i\, \factor_2$. As mentioned in Section~\ref{subsec:subwordComplexes} in this case the subword complexes~$\subwordComplex(\word)$ and~$\subwordComplex(\word')$ clearly are isomorphic since~$s_i$ and~$s_j$ commute in the symmetric group~$\symmetricGroup_n$. We also consider two operations studied by M.~Gorsky in~\cite{Gorsky-nilHeckeMoves,Gorsky-braidMoves}, which will be our main combinatorial tools.
\begin{compactitem}
 \item A word~$\word$ is~\defn{obtained by a~$0$-Hecke move} from a word~$\word'$ if there exists two words~$\factor_1,\factor_2\in S^*$ and~$i\in[n]$ such that~$\word = \factor_1\, \sq{s}_i\, \factor_2$ and~$\word' = \factor_1 \,\sq{s}_i^2\, \factor_2$. In this case~$\word'$ is~\defn{obtained by a reverse~$0$-Hecke move} from~$\word$. Alternatively we will also say that~$\word'$ is~\defn{obtained by doubling a letter} in~$\word$.
 \item A word~$\word$ is~\defn{obtained by a braid move} from a word~$\word'$ if there exists two words~$\factor_1,\factor_2\in S^*$ and~$i,j\in[n]$, with~$|i-j|=1$, such that~$\word = \factor_1\, \sq{s}_i\sq{s}_j\sq{s}_i\, \factor_2$ and~$\word' = \factor_1 \,\sq{s}_j\sq{s}_i\sq{s}_j\, \factor_2$.
\end{compactitem}

\begin{remark}
\label{rem:moves}
Commutation and braid moves are natural operations to consider since the corresponding relations~$s_is_j=s_js_i$ for~$|i-j|\ge2$, and~$s_is_js_i=s_js_is_j$ for~$|i-j|=1$, hold in the symmetric group~$\symmetricGroup_n$, and can be completed into a presentation of~$\symmetricGroup_n$ by adding the relations~$s_i^2=\id$ for~$i\in[n]$. Here the corresponding operation on words is replaced by~$s_i^2=s_i$, which is in fact the last relation in the classical presentation of the~$0$-Hecke algebra of the symmetric group~$\symmetricGroup_n$, hence the name of the corresponding transformation. M.~Gorsky calls these moves~\emph{nil-Hecke moves} in~\cite{Gorsky-nilHeckeMoves} but the corresponding relation in the nil-Hecke algebra would be~$s_i^2=0$.
\end{remark}

We say that we~\defn{apply a braid} (resp.~\defn{$0$-Hecke}, resp.~\defn{commutation})~\defn{move} to a subword complex~$\subwordComplex(\word)$ when we consider the subword complex~$\subwordComplex(\word')$, where~$\word$ and~$\word'$ are related by the same operation. The combinatorial effect of these operations on the subword complex~$\subwordComplex(\word)$ depend on the vertex status of the letters implied in the transformation (see Figure~\ref{fig:sumUpMoves}) and were described by M.~Gorsky as follows.

\begin{figure}
\centerline{\begin{overpic}[width=.73\textwidth]{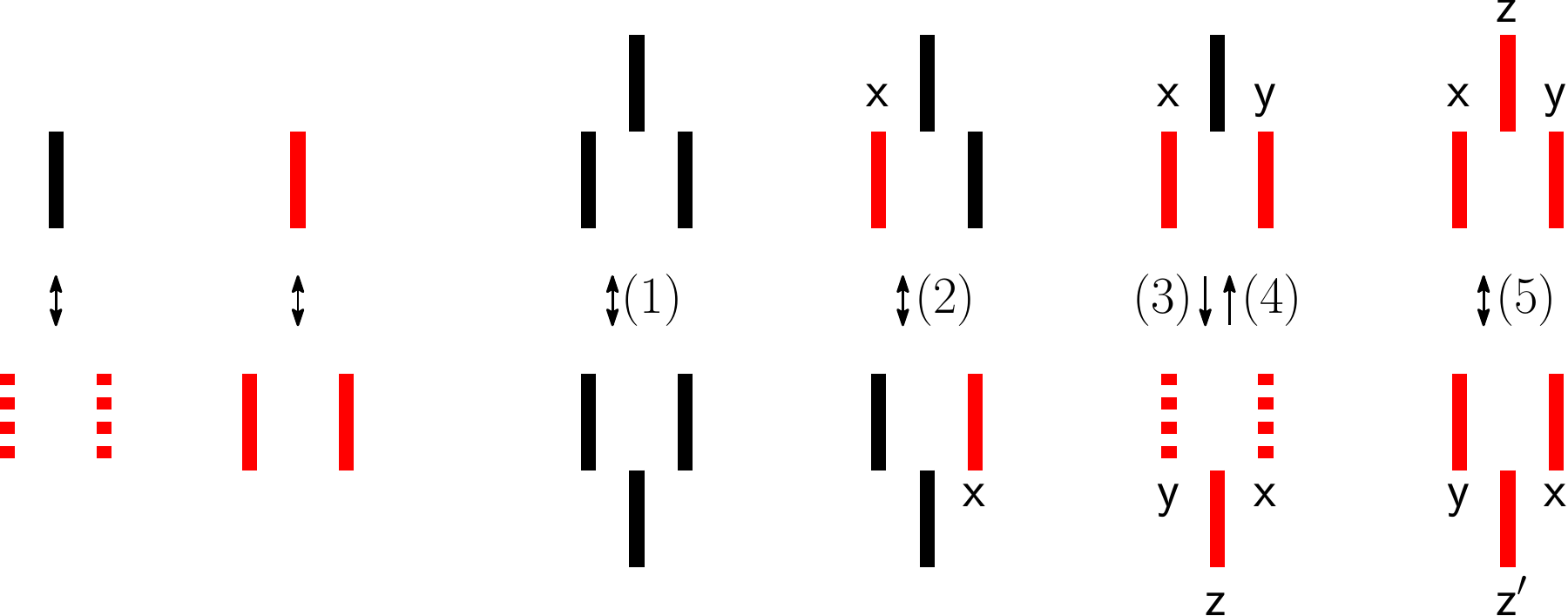}\put(1,-12){$0$-Hecke move}\put(160,-12){Braid move}\end{overpic}}
\vspace{.3cm}
\caption{The evolution of the vertex status of letters implied in~$0$-Hecke and braid moves, seen on the corresponding part of the sorting network~$\network_\word$. Two red dashed segments denote vertices not belonging to a common edge while empty red segment denote vertices in all faces not forbidden by dashed segments. Black plain segments represent non-vertices. The letters~$\sq{x},\sq{y},\sq{z}$ and~$\sq{z}'$ give the identifications of the exchanged letters by a braid move in the topological realizations of the corresponding complexes. The numbers for braid moves correspond to the different Cases in Theorem~\ref{thm:effectBraidMove}.}
\label{fig:sumUpMoves}
\end{figure}

\begin{theorem}[\cite{Gorsky-nilHeckeMoves}]
\label{thm:effectNilHeckeMove}
Suppose that a word~$\word'=\factor_1\,\sq{q}'_r\sq{q}'_{r+1}\,\factor_2$ is obtained from a word~$\word=\factor_1\,\sq{q}_r\,\factor_2$ by doubling the letter~$\sq{q}_r$. If~$\sq{q}_r$ is a vertex of the subword complex~$\subwordComplex(\word)$, then the subword complex~$\subwordComplex(\word')$ is isomorphic to the one-point-suspension~$\ops_{\subwordComplex(\word)}(\sq{q}_r)$ of~$\subwordComplex(\word)$ with respect to~$\sq{q}_r$. Otherwise~$\subwordComplex(\word')$ is isomorphic to the suspension of~$\subwordComplex(\word)$. The suspension vertices in~$\subwordComplex(\word')$ are~$\sq{q}'_r$ and~$\sq{q}'_{r+1}$.
\end{theorem}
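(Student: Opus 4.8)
The plan is to build an explicit simplicial isomorphism by tracking which subwords of~$\word'$ contain a reduced expression of~$w_\circ$ and comparing them with the subwords of~$\word$. Write~$\word=\factor_1\,\sq{q}_r\,\factor_2$ and~$\word'=\factor_1\,\sq{q}'_r\sq{q}'_{r+1}\,\factor_2$, where~$\sq{q}'_r=\sq{q}'_{r+1}=\sq{q}_r=\sq{s}_i$, so that~$\word'$ has exactly one position more than~$\word$. First I would record the one load-bearing fact: since~$s_i^2=\id$, any word containing the factor~$\sq{s}_i\sq{s}_i$ is non-reduced, so every reduced expression of~$w_\circ$ that is a subword of~$\word'$ keeps at most one of the two positions~$r$ and~$r+1$. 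Deleting either single copy from~$\word'$ returns~$\word$ letter for letter, giving two position bijections~$\psi_0$ (which deletes position~$r+1$, with image~$[p+1]\ssm\{r+1\}$) and~$\psi_1$ (which deletes position~$r$, with image~$[p+1]\ssm\{r\}$) that identify the letters of~$\word$ with those of~$\word'$ lying away from the chosen copy. In particular both~$\sq{q}'_r$ and~$\sq{q}'_{r+1}$ are vertices of~$\subwordComplex(\word')$: transporting any reduced expression of~$w_\circ$ in~$\word$ through~$\psi_1$ (resp.\ $\psi_0$) yields a reduced subword of~$\word'$ whose deleted set contains~$r$ (resp.\ $r+1$).

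Set~$x\eqdef\sq{q}_r$,~$x_0\eqdef\sq{q}'_r$ and~$x_1\eqdef\sq{q}'_{r+1}$. I would then classify each face~$J'$ of~$\subwordComplex(\word')$, read as its set of deleted positions, by its trace~$J'\cap\{r,r+1\}$, and denote by~$G$ the transport to~$\word$ of~$J'\ssm\{r,r+1\}$. The four cases should match the four families of faces of
\[
\ops_{\subwordComplex(\word)}(x)=\big(\del_{\subwordComplex(\word)}(x)\join\{x_0,x_1\}\big)\cup\big(\lk_{\subwordComplex(\word)}(x)\join x_0x_1\big)
\]
as follows. If~$J'$ avoids both~$r$ and~$r+1$, then since a reduced subword uses at most one copy of~$\sq{s}_i$ keeping both copies is redundant with keeping one, so~$J'$ is a face iff~$G\in\del_{\subwordComplex(\word)}(x)$, matching~$G$. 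If~$J'\cap\{r,r+1\}$ equals~$\{r\}$ (resp.\ $\{r+1\}$), then via~$\psi_1$ (resp.\ $\psi_0$) one gets that~$J'$ is a face iff~$G\in\del_{\subwordComplex(\word)}(x)$, matching~$G\cup\{x_0\}$ (resp.\ $G\cup\{x_1\}$). Finally if~$J'\supseteq\{r,r+1\}$ the kept word is a subword of~$\factor_1\factor_2=\word_{[p]\ssm\{r\}}$, so~$J'$ is a face iff~$G\cup\{r\}\in\subwordComplex(\word)$, that is iff~$G\in\lk_{\subwordComplex(\word)}(x)$, matching~$G\cup\{x_0,x_1\}$. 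Sending~$x_0,x_1$ to the two copies and every other vertex to its shifted position is a bijection of vertices --- a non-doubled position is a vertex of~$\subwordComplex(\word')$ iff its~$\psi$-image is a vertex of~$\subwordComplex(\word)$ --- and the case analysis is inclusion-compatible, so this is the desired simplicial isomorphism.

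The dichotomy of the statement is localized in the last case. Taking~$G=\varnothing$ there, the edge~$\{x_0,x_1\}$ (deleting both copies of~$\sq{s}_i$) is a face of~$\subwordComplex(\word')$ exactly when~$\{r\}\in\subwordComplex(\word)$, \ie precisely when~$\sq{q}_r$ is a vertex of~$\subwordComplex(\word)$. When it is,~$\lk_{\subwordComplex(\word)}(x)$ contributes and the isomorphism identifies~$\subwordComplex(\word')$ with the full one-point-suspension~$\ops_{\subwordComplex(\word)}(x)$. When~$\sq{q}_r$ is not a vertex, no face deletes both copies so the last family is empty, while~$x$ lies in no face of~$\subwordComplex(\word)$ and hence~$\del_{\subwordComplex(\word)}(x)=\subwordComplex(\word)$; the correspondence then collapses to~$\subwordComplex(\word)\join\{x_0,x_1\}$, the suspension with suspension vertices~$\sq{q}'_r$ and~$\sq{q}'_{r+1}$.

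I expect the main difficulty to be bookkeeping rather than a single hard idea: one must check that the four-case correspondence is a bijection on faces in both directions, and in particular that a face of~$\ops_{\subwordComplex(\word)}(x)$ pulls back to an honest face of~$\subwordComplex(\word')$, which is where the redundancy of keeping both copies of~$\sq{s}_i$ must be invoked. The only genuinely structural input is the non-reducedness of~$\sq{s}_i\sq{s}_i$, which forces at most one copy to survive and thereby creates the (one-point-)suspension, with the vertex status of~$\sq{q}_r$ deciding whether the link-edge~$x_0x_1$ is present.
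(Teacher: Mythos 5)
The paper does not prove Theorem~\ref{thm:effectNilHeckeMove}: it is quoted from Gorsky's work \cite{Gorsky-nilHeckeMoves} without proof, so there is no internal argument to compare against. Your proof is correct and is the standard direct verification: the single structural input is that a subword of~$\word'$ using both doubled positions contains the non-reduced factor~$\sq{s}_i\sq{s}_i$, and your four-way classification of a face by its trace on~$\{r,r+1\}$ matches exactly the two pieces of~$\ops_{\subwordComplex(\word)}(\sq{q}_r)$, with the non-vertex case degenerating to the suspension precisely as in the paper's stated convention for one-point-suspensions at isolated vertices.
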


\begin{theorem}[\cite{Gorsky-braidMoves}]
\label{thm:effectBraidMove}
Suppose that a word~$\word'=\factor_1\,\sq{q}'_r\sq{q}'_{r+1}\sq{q}'_{r+2}\,\factor_2$ is obtained from a word~$\word=\factor_1\,\sq{q}_r\sq{q}_{r+1}\sq{q}_{r+2}\,\factor_2$ by applying a braid move. 
\begin{compactenum}
  \item If none of the letters~$\sq{q}_r,\sq{q}_{r+1}$ and~$\sq{q}_{r+2}$ is a vertex of~$\subwordComplex(\word)$, then~$\subwordComplex(\word)$ and~$\subwordComplex(\word')$ have the same vertices and are isomorphic.
 \item If exactly one of the letters~$\sq{q}_r,\sq{q}_{r+1}$ and~$\sq{q}_{r+2}$ is a vertex of~$\subwordComplex(\word)$, then it is either~$\sq{q}_r$ or~$\sq{q}_{r+2}$, say~$\sq{q}_r$ without loss of generality\footnote{Since~$\subwordComplex(\word)$ and~$\subwordComplex(\word\transpose)$ are isomorphic.}. In this case~$\subwordComplex(\word)$ and~$\subwordComplex(\word')$ are isomorphic and an isomorphism is given by identifying their common vertices and associating~$\sq{q}_r$ to~$\sq{q}'_{r+2}$. In particular~$\sq{q}'_r$ and~$\sq{q}'_{r+1}$ are non-vertices in~$\subwordComplex(\word')$.
 \item If exactly two of the letters~$\sq{q}_r,\sq{q}_{r+1}$ and~$\sq{q}_{r+2}$ are vertices of~$\subwordComplex(\word)$, then these letters are~$\sq{q}_r$ and~$\sq{q}_{r+2}$. If moreover~$\sq{q}_r\sq{q}_{r+2}$ is an edge of the complex~$\subwordComplex(\word)$, then the complex~$\subwordComplex(\word')$ is isomorphic to the stellar subdivision~$\stell_{\subwordComplex(\word)}(\sq{q}_r\sq{q}_{r+2})$ of the edge~$\sq{q}_r\sq{q}_{r+2}$ in the complex~$\subwordComplex(\word)$. In~$\subwordComplex(\word')$, the subdivision vertex is~$\sq{q}'_{r+1}$, the vertex~$\sq{q}'_r$ (resp.~$\sq{q}'_{r+2}$) is identified to the vertex~$\sq{q}_{r+2}$ (resp.~$\sq{q}_r$) of~$\subwordComplex(\word)$, and all common vertices are identified.
 \item If all letters ~$\sq{q}_r,\sq{q}_{r+1}$ and~$\sq{q}_{r+2}$ are vertices of~$\subwordComplex(\word)$ but do not all belong to a same facet, and if~$\sq{q}_r\sq{q}_{r+1}$ is an edge of the subword complex~$\subwordComplex(\word)$, then the roles of~$\subwordComplex(\word)$ and~$\subwordComplex(\word')$ are exchanged in the previous case. That is~$\subwordComplex(\word)$ is obtained from~$\subwordComplex(\word')$ by applying a stellar subdivision of the edge~$\sq{q}'_r\sq{q}'_{r+2}$.
 \item If all letters~$\sq{q}_r,\sq{q}_{r+1}$ and~$\sq{q}_{r+2}$ are vertices of~$\subwordComplex(\word)$ and belong to a common facet, then it is also the case for~$\sq{q}'_r,\sq{q}'_{r+1}$ and~$\sq{q}'_{r+2}$ in~$\subwordComplex(\word)$, and the two stellar subdivisions~$\stell_{\subwordComplex(\word)}(\sq{q}_r\sq{q}_{r+2})$ and~$\stell_{\subwordComplex(\word')}(\sq{q}'_r\sq{q}'_{r+2})$ are isomorphic. The vertex~$\sq{q}_r$ (resp.~$\sq{q}_{r+2}$) of the complex~$\stell_{\subwordComplex(\word)}(\sq{q}_r\sq{q}_{r+2})$ is identified to the vertex~$\sq{q}'_{r+2}$ (resp.~$\sq{q}'_r$) of the complex~$\stell_{\subwordComplex(\word')}(\sq{q}'_r\sq{q}'_{r+2})$, the subdivisions vertex in the complex~$\stell_{\subwordComplex(\word)}(\sq{q}_r\sq{q}_{r+2})$ (resp.~$\stell_{\subwordComplex(\word')}(\sq{q}'_r\sq{q}'_{r+2})$) is identified to~$\sq{q}'_{r+1}$ (resp.~$\sq{q}_{r+1}$) and all common vertices are identified.
\end{compactenum}
\end{theorem}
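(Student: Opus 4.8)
The plan is to describe the facets of both complexes---the complements in the word of the reduced expressions of~$w_\circ$---according to how they meet the three positions~$r,r+1,r+2$ where~$\word$ and~$\word'$ disagree. Write~$\word=\factor_1\,\sq{s}_i\sq{s}_j\sq{s}_i\,\factor_2$ with~$|i-j|=1$, so that~$\word'=\factor_1\,\sq{s}_j\sq{s}_i\sq{s}_j\,\factor_2$; thus~$\sq{q}_r=\sq{s}_i$, $\sq{q}_{r+1}=\sq{s}_j$, $\sq{q}_{r+2}=\sq{s}_i$, and symmetrically for~$\word'$. Every reduced expression of~$w_\circ$ inside either word splits as a reduced subword of~$\factor_1$ of product~$a$, a reduced subword of the central factor of product~$m$, and a reduced subword of~$\factor_2$ of product~$b$, with~$a\,m\,b=w_\circ$ and lengths adding up. First I would fix the part of a facet lying outside the three central positions; this fixes~$a$ and~$b$ and forces the central product to be~$m=a^{-1}w_\circ b^{-1}$, which must be an element of the parabolic subgroup~$\symmetricGroup_3=\langle s_i,s_j\rangle$ realizable by a reduced subword of the central factor. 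Let~$M$ denote the set of central products~$m$ occurring in this way over all admissible outer parts.

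The key local lemma compares the two central factors. Because~$|i-j|=1$, both~$\sq{s}_i\sq{s}_j\sq{s}_i$ and~$\sq{s}_j\sq{s}_i\sq{s}_j$ are reduced expressions of the longest element of~$\symmetricGroup_3$, so the set of products realizable by a reduced subword of either factor is the same six-element set~$\{\id,s_i,s_j,s_is_j,s_js_i,s_is_js_i\}$. Consequently an outer part is admissible for~$\word$ exactly when it is admissible for~$\word'$, and the set~$M$ is common to both complexes. I would then tabulate, for each~$m\in M$, the reduced subwords of the central factor realizing~$m$: the counts coincide for~$\word$ and~$\word'$ for every~$m$ except the two length-one elements, where~$\word$ realizes~$s_i$ twice (positions~$r$ and~$r+2$) and~$s_j$ once (position~$r+1$), while~$\word'$ swaps these multiplicities. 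This single, localized discrepancy at~$s_i$ and~$s_j$ is the entire combinatorial content of the braid move.

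Next I would read the hypotheses of the five cases off the set~$M$. A central letter is a vertex precisely when some admissible facet omits it, which is a membership condition on~$M$ (for instance~$\sq{q}_{r+1}$ is a vertex if and only if~$\id\in M$ or~$s_i\in M$); the pair~$\sq{q}_r\sq{q}_{r+2}$ is an edge if and only if~$\id\in M$ or~$s_j\in M$, the pair~$\sq{q}_r\sq{q}_{r+1}$ is an edge if and only if~$\id\in M$ or~$s_i\in M$, and the three letters lie in a common facet if and only if~$\id\in M$. Working these equivalences out shows that~$\sq{q}_{r+1}$ is a vertex only when all three letters are, that exactly two vertices must be~$\sq{q}_r$ and~$\sq{q}_{r+2}$, and that the five cases correspond to~$M=\{s_is_js_i\}$; to~$M\subseteq\{s_js_i,s_is_js_i\}$ (up to the mirror symmetry~$\subwordComplex(\word)\cong\subwordComplex(\word\transpose)$); to~$s_j\in M$ while~$\id,s_i\notin M$; to~$s_i\in M$ while~$\id\notin M$; and to~$\id\in M$. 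In Cases~1 and~2 the multiplicity lemma yields a length-preserving bijection between facets, hence an isomorphism. In Case~3 the extra realization of~$s_j$ in~$\word'$ splits each facet of~$\subwordComplex(\word)$ containing the edge~$\sq{q}_r\sq{q}_{r+2}$ into two facets sharing the new vertex~$\sq{q}'_{r+1}$, which is exactly the stellar subdivision~$\stell_{\subwordComplex(\word)}(\sq{q}_r\sq{q}_{r+2})$; Case~4 is the same picture with~$\word$ and~$\word'$ interchanged. In Case~5, where~$\id\in M$ makes~$\sq{q}_r\sq{q}_{r+2}$ an edge in both complexes, the two opposite discrepancies at~$s_i$ and~$s_j$ are reconciled by subdividing that edge in each complex, and I would check that the resulting complexes are isomorphic.

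I expect the main difficulty to lie in verifying that these facet bijections are genuine simplicial isomorphisms carrying the stated vertex identifications. The subtle point is that global consistency forces the two extreme letters to be swapped, identifying~$\sq{q}_r$ with~$\sq{q}'_{r+2}$ and~$\sq{q}_{r+2}$ with~$\sq{q}'_r$: this reversal is dictated by the length-two central products~$s_is_j$ and~$s_js_i$, whose unique realizations sit at opposite ends in~$\word$ and~$\word'$, and it must be shown compatible with the identification produced by the subdivided edge in Cases~3--5 and with the multiplicity swap in Case~5. A second point requiring care, underlying the exhaustiveness of the case list, is the structural fact that~$s_i$ and~$s_j$ cannot both lie in~$M$ unless~$\id$ does, so that opposite discrepancies never occur outside Case~5; I would establish this from the monotonicity and the one-letter Lipschitz property of the Demazure product (here written~$*$), which give~$\ell(a*s*b)\le\ell(a*b)+1$ and pin down~$M$ tightly enough. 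Granting these two verifications, the remaining cases follow by the recorded mirror and braid symmetries.
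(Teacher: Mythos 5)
The paper never proves this theorem: it is imported verbatim from Gorsky's work (the bracketed citation in the statement), so there is no internal proof to compare against and your attempt has to stand on its own. Most of it does. The decomposition of facets into an outer part and a reduced subword of the three-letter window, the multiplicity lemma (both central factors realize the same six elements of $\langle s_i,s_j\rangle$, with the multiplicities $2$--$1$ at $s_i$ and $s_j$ swapped), the dictionary between the vertex/edge hypotheses and membership in~$M$ (for instance $\sq{q}_{r+1}$ vertex $\iff \id\in M$ or $s_i\in M$; $\sq{q}_r\sq{q}_{r+2}$ edge $\iff \id\in M$ or $s_j\in M$), and the resulting identification of the five cases with $M=\{s_is_js_i\}$, $s_js_i\in M\subseteq\{s_js_i,s_is_js_i\}$, $s_j\in M,\ \id,s_i\notin M$, $s_i\in M,\ \id\notin M$, and $\id\in M$ are all correct; I checked that in Cases 1, 2, 3 and 5 the facet bijections you describe do give the stated isomorphisms and identifications (in Case 3 the two realizations of $s_j$ in $\word'$ produce exactly the two halves of each subdivided facet, and in Case 5 the two subdivided complexes match under the stated relabeling).

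The genuine gap is in Case 4, and it sits exactly where you flag "a second point requiring care." Case 4 needs the structural fact that $s_i\in M$ and $\id\notin M$ force $s_j\notin M$ (equivalently, $s_i,s_j\in M\Rightarrow\id\in M$): without it, the reverse braid move is not an instance of Case 3, and indeed if $s_i,s_j\in M$ with $\id\notin M$ then $\subwordComplex(\word)$ has facets containing both $\sq{q}_r$ and $\sq{q}_{r+2}$ (those with central product $s_j$), while no face of $\stell_{\subwordComplex(\word')}(\sq{q}'_r\sq{q}'_{r+2})$ contains both ends of the subdivided edge -- the facet counts even differ -- so the conclusion of Case 4 would be false. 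This fact is therefore the entire content of Case 4 beyond Case 3, and the tool you propose for it is wrong: the ``one-letter Lipschitz property'' $\ell(a*s*b)\le\ell(a*b)+1$ of the Demazure product is false. Take $a=b=s_i$ and $s=s_j$ adjacent: then $a*s*b=s_is_js_i$ has length~$3$, while $a*b=s_i*s_i=s_i$ has length~$1$. Worse, this counterexample is precisely the configuration under study (both factors able to supply an $s_i$, an adjacent $s_j$ inserted between them), which is the very reason the structural fact is delicate. The fact itself is true -- it can be proved, e.g., by Bruhat-order lifting arguments reducing to a statement about elements with no descents in $\{s_i,s_j\}$, or by the root-function/exchangeability machinery of Pilaud--Santos that the paper's remark after the theorem alludes to -- but your proposal does not contain such an argument, and until one is supplied Case 4 (and with it the mutual exclusivity of Cases 3--5) is unproven.
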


\begin{figure}
\centerline{\includegraphics[width=1.25\textwidth]{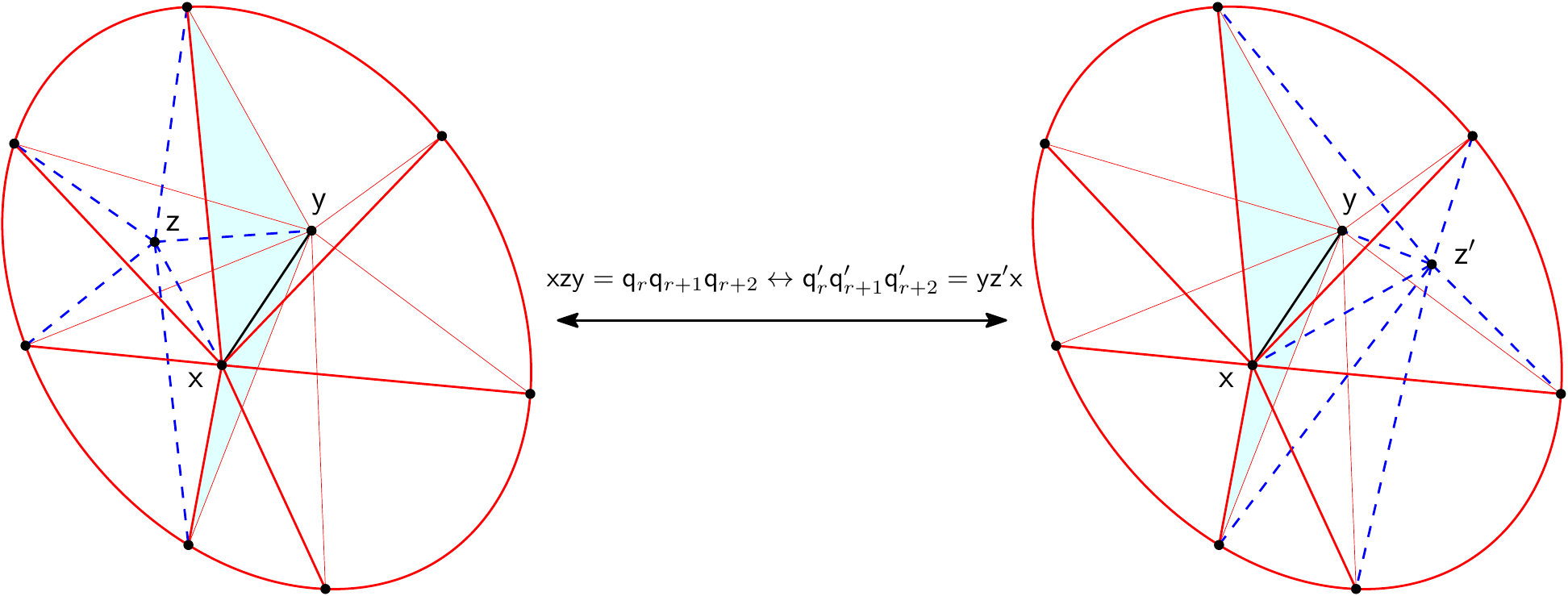}}
\caption{The effect of a braid move in Case~(5) of Theorem~\ref{thm:effectBraidMove}, seen as a local transformation on the topological realization of a~$3$-dimensional subword complex~$\subwordComplex(\word)$. Only the parts of the complex affected by the transformation are depicted, namely the star~$\st_{\subwordComplex(\word)}(\sq{q}_{r+1})$ of the vertex~$\sq{q}_{r+1}$ in~$\subwordComplex(\word)$ and the star~$\st_{\subwordComplex(\word)}(\sq{q}_r\sq{q}_{r+2})$ of the edge~$\sq{q}_r\sq{q}_{r+2}$ in~$\subwordComplex(\word)$. As in Figure~\ref{fig:sumUpMoves}, the letters~$\sq{x},\sq{y},\sq{z}$ and~$\sq{z}'$ describe the identifications of the exchanged letters in the topological realizations of the two subword complexes.}
\label{fig:topologicalBraidMove}
\end{figure}

\begin{remark}
Theorem~\ref{thm:effectBraidMove} does not seem to cover all possible cases because of the additional condition that the edge~$\sq{q}_r\sq{q}_{r+2}$ (resp~$\sq{q}_r\sq{q}_{r+1}$) exists in Case~(3) (resp.~(4)). In fact this condition is always satisfied. Indeed the~\emph{word property} (see~\cite[Theorem 3.3.1]{BjornerBrenti}) asserts that any expression can be transformed into a reduced one of the corresponding element by a sequence using only braid moves and simplifications~$s^2=\id$. Now one can show inductively that in any type~$A_n$ spherical subword complex~$\subwordComplex(\word)$, any two letters~$\sq{q}_\ell,\sq{q}_r$ of the word~$\word$ which are vertices of the complex form an edge of the complex or are exchangeable (maybe both) as follows. The property is clear when~$\word$ is already a reduced expression of~$w_\circ$ since the complex is then empty. Now using the word property, one can derive, for any word~$\word$ containing an expression for~$w_\circ$, a sequence of braid,~$0$-Hecke and commutation moves to move it to a reduced expression of~$w_\circ$. Then one can do the proof inductively on the minimal length of such a sequence, applying the induction hypothesis and Theorems~\ref{thm:effectNilHeckeMove} and~\ref{thm:effectBraidMove}. The induction hypothesis is indeed preserved by reverse one-point-suspension and basic arguments on sorting networks imply that it is also preserved by braid moves. Finally it is also standard in this framework to check that the letters~$\sq{q}_r$ and~$\sq{q}_{r+2}$ (resp.~$\sq{q}_{r+1}$) are not exchangeable in Case~(3) (resp~(4)) of Theorem~\ref{thm:effectBraidMove} (see~\cite{PilaudSantos-brickPolytope}), so that~$\sq{q}_r\sq{q}_{r+2}$ (resp~$\sq{q}_r\sq{q}_{r+1}$) is in fact automatically an edge of the complex. Figure~\ref{fig:sumUpMoves} sums up all cases of Theorems~\ref{thm:effectNilHeckeMove} and~\ref{thm:effectBraidMove} in a more visual way than with their actual descriptions.
\end{remark}

The effects of~$0$-Hecke and braid moves are already illustrated on the complex itself in Figures~\ref{fig:onePointSuspension} and~\ref{fig:stellarSubdivision}, except for Case~(5) of Theorem~\ref{thm:effectBraidMove}. Indeed we have described a relation between the subword complexes~$\subwordComplex(\word)$ and~$\subwordComplex(\word')$, but not in terms of ``transforming~$\subwordComplex(\word)$ into~$\subwordComplex(\word')$''. In this case~$\subwordComplex(\word')$ is obtained from~$\subwordComplex(\word)$ by a stellar subdivision of the edge~$\sq{q}_r\sq{q}_{r+2}$ with subdivision vertex~$\sq{q}'_{r+1}$, followed by a reverse stellar subdivision of the same edge where the disappearing vertex is~$\sq{q}_{r+1}$. This can somehow be geometrically interpreted as ``moving'' the vertex~$\sq{q}_{r+1}$ ``from one side'' of the edge~$\sq{q}_r\sq{q}_{r+2}$ ``to the other side'' and relabeling it~$\sq{q}'_{r+1}$ (see Figure~\ref{fig:topologicalBraidMove}).
Reverse stellar subdivision is bad behaved with respect to geometry, and we present in the next sections a tentative  construction of~$2$-associahedra based on commutation moves,~$0$-Hecke moves and braid moves avoiding Case~(4) of Theorem~\ref{thm:effectBraidMove}, and as much as possible Case~(5) of Theorem~\ref{thm:effectBraidMove}. The key point is that the effect induced by moves on the complex both only depends on the local data of the vertex status of the implied letters, and is itself topologically local.

\section{Loday associahedron by suspensions and stellar subdivisions}
\label{sec:associahedra}

We keep the product notation~$\prod_{j=\mu}^{\nu}\word_j$ to denote the concatenation of a sequence of words~$(\word_j)_{\mu\le j\le\nu}$, with the convention that empty products represent the empty word. We will use the notation~$\sq{c}[\mu,\nu]=\prod_{j = \mu}^{\nu}\sq{s}_j$ for~$1\le \mu \le \nu\le n$. Recall that with this notation, we have~$\sq{c}[\mu]=\sq{c}[1,\mu]$ for~$\mu\in\Nmbb$ and~$\sq{w}_\circ(\sq{c})=\prod_{i=1}^n\sq{c}[n+1-i]$. Moreover in any word, we will from now on denote the positions of the letters of a distinguished factor~$\sq{w}_\circ(\sq{c})$ by the corresponding pair~$(i,j)$ of indices in the double product formula~$\sq{w}_\circ(\sq{c})\stackrel{\raisebox{.04cm}{\text{\tiny Eq~\eqref{eq:coordinatesW}}}}{\raisebox{-.04cm}{=}}\prod_{i=1}^n\prod_{j=1}^{n+1-i}\sq{s}_j$. A word~$\word$ can be~\defn{moved to} a word~$\word'$ if~$\word$ can be transformed into~$\word'$ by applying a sequence of commutation,~$0$-Hecke, reverse~$0$-Hecke and braid moves. If~$\word$ can be moved to~$\word'$ using only commutations moves, then the words~$\word$ and~$\word'$ are~\defn{equivalent under commutation} (or simply~\defn{equivalent}) and we use the notation~$\word\commuteTo\word'$. Notice that in this case, the two subword complexes~$\subwordComplex(\word)$ and~$\subwordComplex(\word')$ are isomorphic.

\begin{lemma}
\label{lem:insertionLetter}
For~$\ell\in[n]$, the~$\sq{c}$-sorted word~$\sq{w}_\circ(\sq{c})$ can be moved to the word~$\sq{w}_\circ(\sq{c})\sq{s}_\ell$ by doubling its letter~$\sq{s}_1$ at position~$(\ell,1)$ (that is its~$\ell$-th letter~$\sq{s}_1$) and applying a sequence of~$\ell-1$ braid moves interlaced with some commutation moves.
\end{lemma}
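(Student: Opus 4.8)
The plan is to realize the transformation as the transport of a single ``traveling'' letter across the staircase shape of $\sq{w}_\circ(\sq{c})$. First I would perform the prescribed doubling (a reverse $0$-Hecke move) of the $\sq{s}_1$ at position $(\ell,1)$, so that the word acquires an adjacent pair $\sq{s}_1\sq{s}_1$; one of these two copies I regard as a traveling letter of current label $1$. As a preliminary sanity check, note that among the moves used only the doubling fails to preserve the underlying product in $\symmetricGroup_{n+1}$ (braid and commutation moves preserve it). Hence the final word must represent the same element as the doubled word, and a direct check using the cancelling pair $\sq{s}_1\sq{s}_1$ shows this element is exactly the one represented by $\sq{w}_\circ(\sq{c})\sq{s}_\ell$, namely $w_\circ s_\ell$. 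This certifies that the target is reachable by the remaining product-preserving moves and pins down the bookkeeping.

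The heart of the argument is a single ``push'' maneuver that I would iterate. Suppose the traveling letter currently has label $j$ and lies to the left of the part of $\sq{w}_\circ(\sq{c})$ it has not yet crossed. Using commutation moves, which slide $\sq{s}_i$ past any $\sq{s}_{i'}$ with $|i-i'|\ge 2$, I would rearrange the next staircase step so that a factor $\sq{s}_j\sq{s}_{j+1}\sq{s}_j$ appears around the traveling copy, and then apply one braid move $\sq{s}_j\sq{s}_{j+1}\sq{s}_j\to\sq{s}_{j+1}\sq{s}_j\sq{s}_{j+1}$. Its effect is to advance the traveling copy one step toward the end while raising its label from $j$ to $j+1$, and to leave the letters already passed reassembled (up to commutation) into the corresponding step of $\sq{w}_\circ(\sq{c})$. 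The decreasing shape of $\sq{w}_\circ(\sq{c})=\prod_{i=1}^{n}\sq{c}[n+1-i]$ is precisely what guarantees that, after the appropriate commutations, the required pattern $\sq{s}_j\sq{s}_{j+1}\sq{s}_j$ is available at each stage.

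Iterating the push, the label climbs $1\to 2\to\dots\to\ell$ at the cost of exactly one braid move per increment, hence $\ell-1$ braid moves interlaced with commutations, as claimed; after the last push the traveling letter, now $\sq{s}_\ell$, has been carried to the very end, a final round of commutations restores the passive letters to $\sq{w}_\circ(\sq{c})$, and one obtains $\sq{w}_\circ(\sq{c})\sq{s}_\ell$. I expect the main obstacle to be the bookkeeping inside the push maneuver: one must maintain a precise invariant describing the intermediate word up to commutation, strong enough both to exhibit the next braid factor and to certify that, once the traveling letter has passed, the remaining letters reconstitute $\sq{w}_\circ(\sq{c})$ itself rather than some merely product-equivalent word. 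I would organize this as an induction on the number of braid moves already performed, equivalently on the current label $j$, carrying the invariant through each application of the push. The cleanest way to control it is via the sorting network $\network_{\word}$: the doubling creates a contractible double crossing, and each braid move is a triangle flip carrying that crossing one staircase step closer to the boundary, where it finally emerges as the appended letter $\sq{s}_\ell$.
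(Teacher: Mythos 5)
Your proposal is correct and takes essentially the same route as the paper: the paper's proof is precisely your ``push maneuver,'' made explicit by the invariant that the not-yet-traversed part has the form $\sq{s}_k\sq{c}[k+1,\ell]\sq{c}[k,\ell-1]$, which commutes to $\sq{s}_k\sq{s}_{k+1}\sq{s}_k\sq{c}[k+2,\ell]\sq{c}[k+1,\ell-1]$ and exposes the next braid triple, raising the traveling letter's index by one per braid move. The group-element sanity check and the sorting-network picture you add are harmless extras not used in the paper's argument.
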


\begin{proof}
For~$k\in[\ell-1]$, we can let the second letter~$\sq{s}_k$ of the word~$\sq{s}_k\sq{c}[k+1,\ell]\sq{c}[k,\ell-1]$ commute as much as possible to the left to obtain that it is equivalent to the word~$\sq{s}_k\sq{s}_{k+1}\sq{s}_k\sq{c}[k+2,\ell]\sq{c}[k+1,\ell-1]$. We can then apply a braid move on the prefix~$\sq{s}_k\sq{s}_{k+1}\sq{s}_k$ of this last word to obtain the word~${\sq{s}_{k+1}\sq{s}_k\sq{s}_{k+1}\sq{c}[k+2,\ell]\sq{c}[k+1,\ell-1]}$. Therefore a straightforward induction on~$k$ shows that the word~$\sq{s}_1\sq{c}[\ell]\sq{c}[\ell-1]$ can be moved to the word~$\sq{c}[\ell]\sq{c}[\ell-1]\sq{s}_\ell$ applying~$\ell-1$ braid moves interlaced with some commutation moves. Multiplying both words by~$\sq{w}_\circ(\sq{c}[\ell-2])$ on the right, the identity~$\sq{w}_\circ(\sq{c}[\ell])=\sq{c}[\ell]\sq{c}[\ell-1]\sq{w}_\circ(\sq{c}[\ell-2])$ yields that the word~$\sq{s}_1\sq{w}_\circ(\sq{c}[\ell])$ can be moved to the word~$\sq{c}[\ell]\sq{c}[\ell-1]\sq{s}_\ell\sq{w}_\circ(\sq{c}[\ell-2])$ by a sequence of~$\ell-1$ braid moves interlaced with some commutation moves. Since the letter~$\sq{s}_\ell$ commutes to all letters of the word~$\sq{w}_\circ(\sq{c}[\ell-2])$, we obtain by the same identity that the word~$\sq{s}_1\sq{w}_\circ(\sq{c}[\ell])$ can be moved to the word~$\sq{w}_\circ(\sq{c}[\ell])\sq{s}_\ell$ by a sequence of~$\ell-1$ braid moves interlaced with some commutation moves. This is the result for~$\ell=n$ since the word~$\sq{s}_1\sq{w}_\circ(\sq{c}[\ell])$ is obtained from the word~$\sq{w}_\circ(\sq{c}[\ell])$ by doubling its first letter~$\sq{s}_1$. Since any word~$\sq{w}_\circ(\sq{c}[\ell])$ (for~$\ell\in[n]$) is a suffix of the word~$\sq{w}_\circ(\sq{c}[n])$, the result for any~$\ell\in[n]$ finally follows by multiplying on the left by the suitable prefix.
\end{proof}

Applying Lemma~\ref{lem:insertionLetter} repeatedly, we obtain the following more specific statement.

\begin{corollary}[fattening a triangle]
\label{cor:fatteningTriangle}
The~$\sq{c}$-sorted word~$\sq{w}_\circ(\sq{c})$ can be moved to the word~$\sq{w}_\circ(\sq{c})\sq{c}\transpose = \sq{w}_\circ(\sq{c})\, \sq{s}_n\, \sq{s}_{n-1}\, \dots \, \sq{s}_1$ by doubling all its letters~$\sq{s}_1$ and applying a sequence of~$n(n-1)/2$ braid moves interlaced with some commutation moves.
\end{corollary}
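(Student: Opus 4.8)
The plan is to obtain the word $\sq{w}_\circ(\sq{c})\,\sq{c}\transpose=\sq{w}_\circ(\sq{c})\,\sq{s}_n\sq{s}_{n-1}\cdots\sq{s}_1$ by applying Lemma~\ref{lem:insertionLetter} $n$ times, once for each value $\ell\in[n]$, thereby building up the appended factor $\sq{c}\transpose$ one letter at a time. Concretely, I would prove by induction on $m\in\{0,1,\dots,n\}$ the intermediate claim that $\sq{w}_\circ(\sq{c})$ can be moved to $\sq{w}_\circ(\sq{c})\,\sq{s}_m\sq{s}_{m-1}\cdots\sq{s}_1$, using $\sum_{\ell=1}^{m}(\ell-1)$ braid moves interlaced with commutations and doubling $m$ distinct letters $\sq{s}_1$ (one per value $\ell\in[m]$, as prescribed by the lemma). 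The case $m=0$ is trivial, and the case $m=n$ is exactly the corollary: the braid count is $\sum_{\ell=1}^{n}(\ell-1)=n(n-1)/2$, and since $\sq{w}_\circ(\sq{c})$ contains exactly $n$ letters $\sq{s}_1$ and distinct values of $\ell$ select distinct ones, all of them get doubled.

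For the inductive step I would start from the word $\sq{w}_\circ(\sq{c})\,\sq{s}_{m-1}\cdots\sq{s}_1$, whose prefix is \emph{literally} $\sq{w}_\circ(\sq{c})$ because Lemma~\ref{lem:insertionLetter} reaches its target word on the nose (not merely up to commutation). I then apply the lemma with $\ell=m$ to this prefix: doubling the letter $\sq{s}_1$ it prescribes and performing the $m-1$ braid moves and accompanying commutations turns the prefix $\sq{w}_\circ(\sq{c})$ into $\sq{w}_\circ(\sq{c})\,\sq{s}_m$, hence the whole word into $\sq{w}_\circ(\sq{c})\,\sq{s}_m\sq{s}_{m-1}\cdots\sq{s}_1$. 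This advances the induction while adding exactly $m-1$ braid moves and one more doubled letter $\sq{s}_1$.

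The point requiring care, which I expect to be the only real (if mild) obstacle, is the legitimacy of running Lemma~\ref{lem:insertionLetter} on the prefix $\sq{w}_\circ(\sq{c})$ while it sits inside the longer word $\sq{w}_\circ(\sq{c})\,\sq{s}_{m-1}\cdots\sq{s}_1$. Every commutation and braid move supplied by the lemma acts on a factor lying entirely within $\sq{w}_\circ(\sq{c})$ (together with the doubled letter), so these are genuine moves of the enlarged word that never touch the suffix $\sq{s}_{m-1}\cdots\sq{s}_1$; the suffix simply rides along. The single junction to inspect is where the lemma deposits $\sq{s}_m$ at the right end of $\sq{w}_\circ(\sq{c})$, immediately to the left of the first suffix letter $\sq{s}_{m-1}$: since $|m-(m-1)|=1$ these letters do not commute, so $\sq{s}_m$ settles exactly at the prefix–suffix boundary, no spurious move is triggered, and the resulting word is precisely $\sq{w}_\circ(\sq{c})\,\sq{s}_m\sq{s}_{m-1}\cdots\sq{s}_1$, as needed to close the induction.
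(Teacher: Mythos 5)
Your route is genuinely different from the paper's, and it does correctly reach the word $\sq{w}_\circ(\sq{c})\sq{c}\transpose$ with the announced count $\sum_{\ell=1}^{n}(\ell-1)=n(n-1)/2$ of braid moves: you iterate Lemma~\ref{lem:insertionLetter} for $\ell=1,\dots,n$, appending the letters of $\sq{c}\transpose$ one at a time, whereas the paper inducts on $n$, doubling the first letter $\sq{s}_1$, fattening the sub-triangle $\sq{w}_\circ(\sq{c}[n-1])$ by induction, and only then running Lemma~\ref{lem:insertionLetter} once with $\ell=n$. Your check that each application of the lemma only involves letters of the current prefix, so that the suffix $\sq{s}_{m-1}\cdots\sq{s}_1$ simply rides along, is the right thing to verify and is fine.

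What your argument misses is precisely the point the paper's proof is organized around: that \emph{all $n$ doublings can be performed first, on the original word, before any braid move}. In your scheme the doubling of step $m$ occurs after the $(m-1)(m-2)/2$ braid moves of the earlier steps, so reverse $0$-Hecke moves are interlaced with braid moves. This has two consequences. First, ``doubling all its letters $\sq{s}_1$'' refers to the $n$ letters $\sq{s}_1$ of the \emph{original} word; at step $m\ge 3$ your prefix equals $\sq{w}_\circ(\sq{c})$ only as a string, its letters having been shuffled by earlier braid moves, so the assertion that ``distinct values of $\ell$ select distinct ones, all of them get doubled'' is not immediate and would require the letter-tracking of Lemma~\ref{lem:manyRefinementsLemma}. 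Second, the sequel depends on the ordering: performing the $n$ doublings up front turns $\subwordComplex(\sq{w}_\circ(\sq{c}))=\{\varnothing\}$ into the boundary of the $n$-dimensional cross-polytope via Theorem~\ref{thm:effectNilHeckeMove}, after which the $n(n-1)/2$ braid moves are stellar subdivisions of edges; this is exactly what yields Corollary~\ref{cor:associahedronByTruncations} and the identification with Loday's realization. With your interlaced ordering one only obtains an alternation of suspensions and stellar subdivisions, which still gives realizability but not the cross-polytope statement. So your proof establishes the weak reading of the corollary by a valid alternative decomposition, but the ``doublings first'' refinement --- the actual content of the paper's induction --- is missing.
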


\begin{proof}
We check by induction on~$n\ge1$ that all the letters~$\sq{s}_1$ in the word~$\sq{w}_\circ(\sq{c})$ can be doubled at first before applying the other moves of Lemma~\ref{lem:insertionLetter}. The word~$\sq{s}_1\sq{w}_\circ(\sq{c})$ can be obtained from the word~$\sq{w}_\circ(\sq{c})$ by doubling its first letter~$\sq{s}_1$. We apply the induction hypothesis to the suffix~$\sq{w}_\circ(\sq{c}[n-1])$ of the word~$\sq{s}_1\sq{w}_\circ(\sq{c})=\sq{s}_1\sq{c}\,\sq{w}_\circ(\sq{c}[n-1])$ to find a sequence of moves starting by doubling all the letters~$\sq{s}_1$ in this factor and transforming the word~$\sq{s}_1\sq{w}_\circ(\sq{c})$ into the word~$\sq{s}_1\sq{w}_\circ(\sq{c})(\sq{c}[n-1])^{-1}$. We finally apply Lemma~\ref{lem:insertionLetter} to the prefix~$\sq{s}_1\sq{w}_\circ(\sq{c})$ of this last word, omitting the initial doubling, into moving it to the word~$\sq{s}_1\sq{w}_\circ(\sq{c})\sq{s}_n$. Since~$\sq{c}^{-1}=\sq{s}_n(\sq{c}[n-1])^{-1}$, we are done.
\end{proof}

We will now refer to a distinguished factor~$\sq{w}_\circ(\sq{c})$ in a word~$\word=\factor_1\,\sq{w}_\circ(\sq{c})\,\factor_2$ as a~\defn{triangle} in~$\word$, because of the shape of the corresponding sorting network. We say that we~\defn{fatten a triangle in}~$\word$ when we consider a word~$\word'=\factor_1\,\sq{w}_\circ(\sq{c})\sq{c}\transpose\,\factor_2$ obtained from~$\word$ by applying the sequence of moves of Corollary~\ref{cor:fatteningTriangle} to its distinguished triangle. Let~$\word=\word_1,\dots,\word_\ell=\word'$ be the successive words obtained in this sequence of moves, where we write~$\word_k=\factor_1\factor[T]_k\factor_2$ for~$k\in[\ell]$. Notice that~$\factor[T]_1=\sq{w}_\circ(\sq{c})$ and~$\factor[T]_\ell=\sq{w}_\circ(\sq{c})\sq{c}^{-1}$. The operation of fattening a triangle comes together with a natural correspondence between the letters in the word~$\word$ and those in the word~$\word'$. The letters in the common factors~$\factor_1$ and~$\factor_2$ are indeed naturally identified, and the letters of the middle factor~$\sq{w}_\circ(\sq{c})\sq{c}\transpose$ of~$\word'$ can be associated to these of the middle factor~$\sq{w}_\circ(\sq{c})$ of~$\word$ using the following labeling rules along moves.

\begin{compactitem}
 \item The letters in the distinguished factor~$\sq{w}_\circ(\sq{c})$ of the word~$\word$ are labeled with their position (given by a pair of indices) in~$\sq{w}_\circ(\sq{c})$ (see Figure~\ref{fig:labelingRules} left).
 \item After doubling a letter~$\sq{s}_1$ at position~$(i,1)$ in a distinguished factor~$\factor[T]_k$ of a word~$\word_k, (k\in[\ell-1])$, we label the two resulting letters~$\sq{s}_1$ in~$\word_{k+1}$ with~$(i,1)$ and~$(i,1)'$. Indeed Theorem~\ref{thm:effectNilHeckeMove} asserts that the new subword complex~$\subwordComplex(\word_{k+1})$ is somehow the same as the previous subword complex~$\subwordComplex(\word_k)$, but with two copies of its initial letters~$\sq{s}_1$ (see Figure~\ref{fig:labelingRules} middle).
 \item After a braid move on a factor~$\sq{q}_r\sq{q}_{r+1}\sq{q}_{r+2}$ of a word~$\word_k$~$(k\in[\ell-1])$ producing a factor~$\sq{q}'_r\sq{q}'_{r+1}\sq{q}'_{r+2}$ in the word~$\word_{k+1}$, we label the letter~$\sq{q}'_{r}$ (resp.~$\sq{q}'_{r+1}$, resp.~$\sq{q}'_{r+2}$) in the word~$\word_{k+1}$ with the same label as that of the letter~$\sq{q}_{r+2}$ (resp.~$\sq{q}_{r+1}$, resp.~$\sq{q}_r$) in the word~$\word_k$ (see Figure~\ref{fig:labelingRules} right). This corresponds to the identifications suggested by Theorem~\ref{thm:effectBraidMove} (see Figure~\ref{fig:sumUpMoves}).
 \end{compactitem}

\begin{figure}
\centerline{\includegraphics[width=1.3\textwidth]{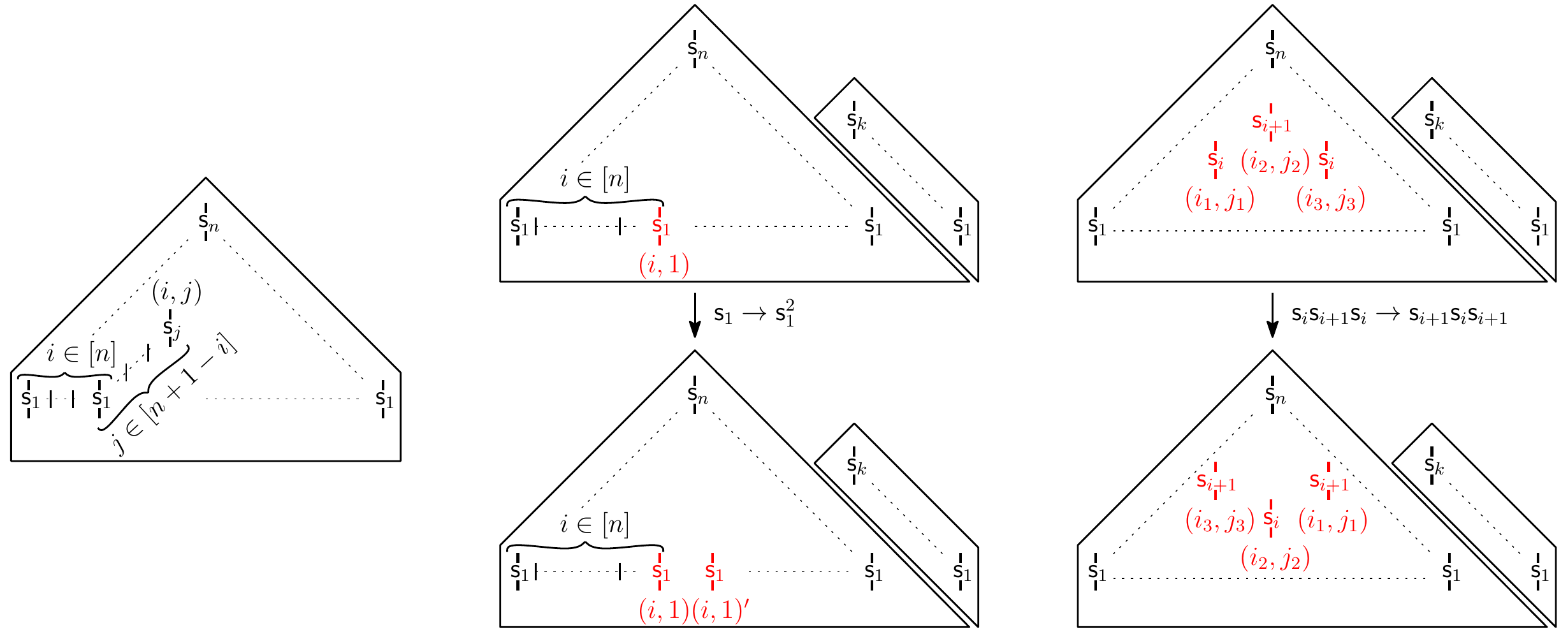}}
\caption{The evolution of the labels of the letters in a sequence of moves to fatten a triangle~$\sq{w}_\circ(\sq{c})$, seen on the corresponding sorting networks. The initial letters of the triangle~$\sq{w}_\circ(\sq{c})$ are labeled with their position~$(i,j)$ (with~$i\in[n],j\in[n+1-j]$) (left). After doubling a letter~$\sq{s}_1$ labeled~$(i,1)$ (top middle), the two new letters~$\sq{s}_1$ are labeled~$(i,1)$ and~$(i,1)'$ (bottom middle). The letters obtained by braid moves are labeled following the identification in Theorem~\ref{thm:effectBraidMove}~(right).}
\label{fig:labelingRules}
\end{figure}

Notice that even if there are cases in Theorem~\ref{thm:effectBraidMove}, the identification between the letters implied in a braid move always follows ours, independently of their vertex status (see Figure~\ref{fig:sumUpMoves}). The letters~$\sq{q}_{r}$ and~$\sq{q}'_{r+2}$ (resp~$\sq{q}_{r+2}$ and~$\sq{q}'_{r}$) are indeed always identified, and in each case the letter~$\sq{q}'_{r+1}$ is obtained by some transformation of the letter~$\sq{q}_{r+1}$. Figure~\ref{fig:refinementLemmaInsertionLetter} illustrates the labeling evolution rules on an example.

\begin{figure}
\centerline{\includegraphics[width=1.4\textwidth]{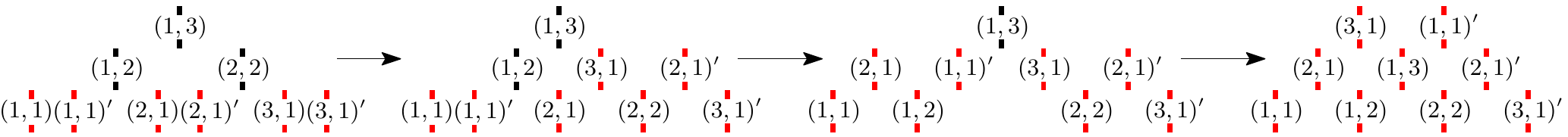}}
\vspace{-.05cm}
\caption{The vertex status and label evolution of letters along a sequence to fatten a triangle~$\sq{s}_1\sq{s}_2\sq{s}_3\sq{s}_1\sq{s}_2\sq{s}_1$, seen on the sorting networks of the words in the sequence. After doubling all letters~$\sq{s}_1$, one successively obtains the words~$\sq{s}_1\sq{s}_1\sq{s}_2\sq{s}_3\sq{s}_1\sq{s}_2\sq{s}_1\sq{s}_2\sq{s}_1\,,\,\sq{s}_1\sq{s}_2\sq{s}_1\sq{s}_2\sq{s}_3\sq{s}_2\sq{s}_1\sq{s}_2\sq{s}_1$ and~$\sq{s}_1\sq{s}_2\sq{s}_3\sq{s}_1\sq{s}_2\sq{s}_3\sq{s}_1\sq{s}_2\sq{s}_1$ by applying three braid moves and some commutation moves. Red empty (resp. black plain) segments denote vertices (resp. non-vertices) of the current subword complex.}
\label{fig:refinementLemmaInsertionLetter}
\vspace{-.25cm}
\end{figure}

\begin{lemma}
\label{lem:manyRefinementsLemma}
Let~$\word=\factor_1\sq{w}_\circ(\sq{c})\factor_2$ be a word with a distinguished triangle and let~$\word=\word_1,\dots,\word_\ell$ (with~$\word_k=\factor_1\factor[T]_k\factor_2$ for~$k\in[\ell]$) be a fattening sequence of this triangle.
\begin{compactitem}
 \item The labels of the letters of~$\factor[T]_\ell=\sq{w}_\circ(\sq{c})\sq{c}^{-1}=\sq{c}\sq{w}_\circ(\sq{c}[n-1])\sq{c}^{-1}$, obtained by the identification rules, are these of Figure~\ref{fig:identificationPattern}. Namely the letter at position~$i\in[n]$ in the~$\sq{c}$ prefix is labeled~$(i,1)$, the letter indexed~$(i,j)$ (with~$i\in[n-1],j\in[n-i]$) in the factor~$\sq{w}_\circ(\sq{c}[n-1])=\prod_{1\le i\le n-1}\prod_{1\le j\le n-i}\sq{s}_j$ is labeled~$(i,j+1)$ and the letter in position~$i\in[n]$ in the~$\sq{c}^{-1}$ suffix is labeled~$(n-i+1,1)'$.
 \item If the word~$\word_k$ (for~$k\in[\ell]$) contains a factor~$\sq{q}_r\sq{q}_{r+1}\sq{q}_{r+2}$ implied in a braid move, then the labels of~$\sq{q}_r,\sq{q}_{r+1}$ and~$\sq{q}_{r+2}$ in~$\word_k$ are~$(i,1)',(i,j+1)$ and~$(i+j,1)$ respectively for some~$i\in[n-1],j\in[n-i+1]$. Moreover~$\sq{q}_r\sq{q}_{r+2}$ is an edge of the subword complex~$\subwordComplex(\word_k)$, and~$\sq{q}_{r+1}$ is a vertex of~$\subwordComplex(\word_k)$ if and only if the letter with label~$(i,j+1)$ in~$\factor[T]_1$ is a vertex of~$\subwordComplex(\word)$.
\end{compactitem}
\end{lemma}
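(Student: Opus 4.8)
I would prove both items at once, by induction on~$n$, organized around the recursive fattening used in the proof of Corollary~\ref{cor:fatteningTriangle}: fattening the triangle~$\sq{w}_\circ(\sq{c})$ on~$n$ strands decomposes into (i)~doubling its first letter~$\sq{s}_1$, (ii)~recursively fattening the suffix triangle~$\sq{w}_\circ(\sq{c}[n-1])$ occupying rows~$2,\dots,n$, and (iii)~appending~$\sq{s}_n$ by a single application of Lemma~\ref{lem:insertionLetter}. Throughout the sequence $\word=\word_1,\dots,\word_\ell$ I would carry along, for each letter, three pieces of data simultaneously: its actual letter in~$S$, its label under the identification rules, and its vertex status in the current complex. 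The base case~$n=1$ is the single doubling $\sq{s}_1\mapsto\sq{s}_1\sq{s}_1$, for which both items are immediate.

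For the first item, the guiding observation is that the labelling only permutes labels: doubling an~$\sq{s}_1$ labelled~$(i,1)$ produces the pair~$(i,1),(i,1)'$ (with one fixed copy receiving the prime), while a braid move exchanges the labels of its two outer letters and fixes that of its middle letter. I would apply the induction hypothesis to stage~(ii), relabelling the local pattern of the~$(n-1)$-strand fattening by the embedding shift~$i\mapsto i+1$ on row indices, so that rows~$2,\dots,n$ acquire labels~$(i,1)$, $(i,j+1)$ and the appropriate primed ones. It then remains to follow stage~(iii): the doubled copy of the top~$\sq{s}_1$ climbs through the network by the~$n-1$ braid moves of Lemma~\ref{lem:insertionLetter}, and tracing the label exchanges shows that the prefix~$\sq{c}$ receives the labels~$(1,1),\dots,(n,1)$, the letters~$\sq{s}_2,\dots,\sq{s}_n$ of the former first row land in the~$i=1$ slots~$(1,2),\dots,(1,n)$ of the middle factor~$\sq{w}_\circ(\sq{c}[n-1])$, and the climbing copy feeds the suffix. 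The one point requiring genuine care is the orientation of the primes: reading~$\sq{c}^{-1}=\sq{s}_n\dots\sq{s}_1$, the letter~$\sq{s}_i$ ends up labelled~$(n-i+1,1)'$, so the suffix labels occur in the order opposite to the prefix labels, a reversal produced by the final braids; matching this reversal against Figure~\ref{fig:identificationPattern} is the delicate step.

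For the second item I would first record that the only letters ever doubled are the letters~$\sq{s}_1$ of the original triangle, whose labels are exactly those of the form~$(*,1)$ and~$(*,1)'$; in particular a letter carrying a label~$(i,j+1)$ with~$j\ge1$ is never a doubled letter. By Theorem~\ref{thm:effectNilHeckeMove}, immediately after being doubled the two copies of an~$\sq{s}_1$ are suspension (or one-point-suspension) vertices, hence genuine vertices of the ambient complex; and in every case of Theorem~\ref{thm:effectBraidMove} the vertex status of an \emph{outer} letter of a braid is preserved. Since the climbing doubled~$\sq{s}_1$ always plays the role of~$\sq{q}_r$ and a prefix-side copy that of~$\sq{q}_{r+2}$, both outer letters of any braid are vertices, and matching the braids of Lemma~\ref{lem:insertionLetter} against the labelling shows that the triple~$\sq{q}_r\sq{q}_{r+1}\sq{q}_{r+2}$ carries labels~$(i,1)'$,~$(i,j+1)$,~$(i+j,1)$ as claimed. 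The edge~$\sq{q}_r\sq{q}_{r+2}$ then exists because its two endpoints are vertices that are not exchangeable, so the Remark following Theorem~\ref{thm:effectBraidMove} forces them to span an edge. Finally, the middle letter~$\sq{q}_{r+1}$ has a label~$(i,j+1)$ with~$j\ge1$, so it has never been doubled and, in all its earlier appearances, has only been a status-preserving outer letter; hence at the moment of its braid it still carries the vertex status of the letter labelled~$(i,j+1)$ in~$\factor[T]_1$, which is precisely the stated equivalence.

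The main obstacle I anticipate is the simultaneous coherence of the three layers of bookkeeping across the interleaving of doublings and braids, and specifically verifying that each braid triple carries exactly the labels~$(i,1)',(i,j+1),(i+j,1)$. This requires pinning down inside Lemma~\ref{lem:insertionLetter} which physical copy of a doubled~$\sq{s}_1$ is the one climbing the network, and checking by a counting argument (there are~$n(n-1)/2$ braids and exactly~$n(n-1)/2$ labels~$(i,j+1)$ with~$j\ge1$) that each middle label occurs as the middle of a braid exactly once, so that its vertex status is still the original one when its braid is performed. Getting the prime orientation of the suffix and these climbing dynamics exactly right, rather than any conceptual difficulty, is where the argument must be careful.
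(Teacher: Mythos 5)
Your overall strategy --- induction on~$n$ following the recursive decomposition of Corollary~\ref{cor:fatteningTriangle} (double the first~$\sq{s}_1$, fatten the suffix triangle~$\sq{w}_\circ(\sq{c}[n-1])$, insert~$\sq{s}_n$ via Lemma~\ref{lem:insertionLetter}), carrying letter, label and vertex status simultaneously along the sequence --- is exactly the one the paper uses, and your bookkeeping of labels and of the vertex status of the outer and middle letters is consistent with it. The one place where you depart from the paper is also the one place where your argument has a genuine gap: the existence of the edge~$\sq{q}_r\sq{q}_{r+2}$. You derive it from the dichotomy of the remark following Theorem~\ref{thm:effectBraidMove} together with the bare assertion that~$\sq{q}_r$ and~$\sq{q}_{r+2}$ are not exchangeable. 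But non-exchangeability of the outer letters is \emph{not} a general property of braid triples: in Case~(4) of Theorem~\ref{thm:effectBraidMove} the two outer letters are vertices that do not span an edge (the pair~$\sq{q}_r\sq{q}_{r+2}$ is precisely the edge whose stellar subdivision is being reversed), so by the very dichotomy you invoke they \emph{are} exchangeable there. Since the whole point of this part of the lemma is to show that Case~(4) never occurs in a fattening sequence, your justification is essentially circular: deciding whether the outer pair is exchangeable is equivalent to deciding whether one is in Case~(4) or Case~(5), and you offer no argument specific to the fattening sequence that settles it.

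The paper closes this gap by a direct tracking argument that you would need to supply. When a letter~$\sq{s}_1$ is doubled, Theorem~\ref{thm:effectNilHeckeMove} makes each of the two new letters a suspension vertex of a (one-point-)suspension, and such a vertex forms an edge with every other vertex of the resulting complex; since all doublings can be performed first (Corollary~\ref{cor:fatteningTriangle}), every pair of letters labelled~$(i,1)'$ and~$(i',1)$ spans an edge before any braid move is applied. This edge then survives until the braid move that uses it, because a stellar subdivision of an edge destroys only that edge, one-point-suspensions destroy none, and --- by the counting observation you already make --- each outer pair~$\{(i,1)',(i+j,1)\}$ occurs in exactly one braid move, so no earlier braid move can have subdivided it. With this replacement (and the minor correction that a letter labelled~$(i,j+1)$ with~$j\ge1$ is in fact never involved in \emph{any} earlier move, rather than appearing as a ``status-preserving outer letter''), your proof coincides with the paper's; without it, the step that separates the tractable Cases~(3) and~(5) from the problematic Case~(4) is unsupported.
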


Lemma~\ref{lem:manyRefinementsLemma} translates obvious phenomena that can be observed on the example in Figure~\ref{fig:refinementLemmaInsertionLetter}. The proof is an easy but technical refinement of the proofs of Lemma~\ref{lem:insertionLetter} and Corollary~\ref{cor:fatteningTriangle}. We only give a sketch of it and leave the details to the reader.

\begin{proof}[Proof (sketch).]
The proof is by induction on~$n\ge1$, the case~$n=1$ being trivial. Given a triangle~$\sq{w}_\circ(\sq{c})=\sq{c}\sq{w}_\circ(\sq{c}[n-1])$ in a word~$\word$, we first fatten its subfactor~$\sq{w}_\circ(\sq{c}[n-1])$ into transforming~$\sq{w}_\circ(\sq{c})$ into the factor~$\sq{c}\sq{w}_\circ(\sq{c}[n-1])(\sq{c}(n-1])^{-1}$. In this last factor, the labels of the letters in the prefix~$\sq{c}$ are still the initial ones, while the labels of the other letters are described by the induction hypothesis. Moreover, the vertex status of the letters in the prefix~$\sq{c}$ is the same as in the word~$\word$ since the effect of reverse~$0$-Hecke and braid moves is local, by Theorems~\ref{thm:effectNilHeckeMove} and~\ref{thm:effectBraidMove}. To move the factor~$\sq{c}\sq{w}_\circ(\sq{c}[n-1])(\sq{c}[n-1])^{-1}=\sq{w}_\circ(\sq{c})(\sq{c}(n-1])^{-1}$ to the factor~$\sq{w}_\circ(\sq{c})\sq{c}^{-1}$, we apply Lemma~\ref{lem:insertionLetter} to its~$\sq{w}_\circ(\sq{c})$ prefix in order to insert a new letter~$\sq{s}_n$. It is then straightforward to adapt the induction in the proof of Lemma~\ref{lem:insertionLetter} into keeping track of the labels and vertex status of the letters in the final factor~$\sq{w}_\circ(\sq{c})\sq{c}^{-1}$, so as of the prescribed edges. The key point for the induction step is that doubling the first letter~$\sq{s}_1$ creates an edge between any of the two resulting letters and any other vertex~$\sq{q}$ of the current subword complex (by Theorem~\ref{thm:effectNilHeckeMove}), and that this edge is never affected by the stellar subdivisions and reverse stellar subdivisions corresponding to the braid moves (by Theorem~\ref{thm:effectBraidMove}) which do not imply both letters.
\end{proof}

\begin{figure}
\centerline{\includegraphics[width=1.3\textwidth]{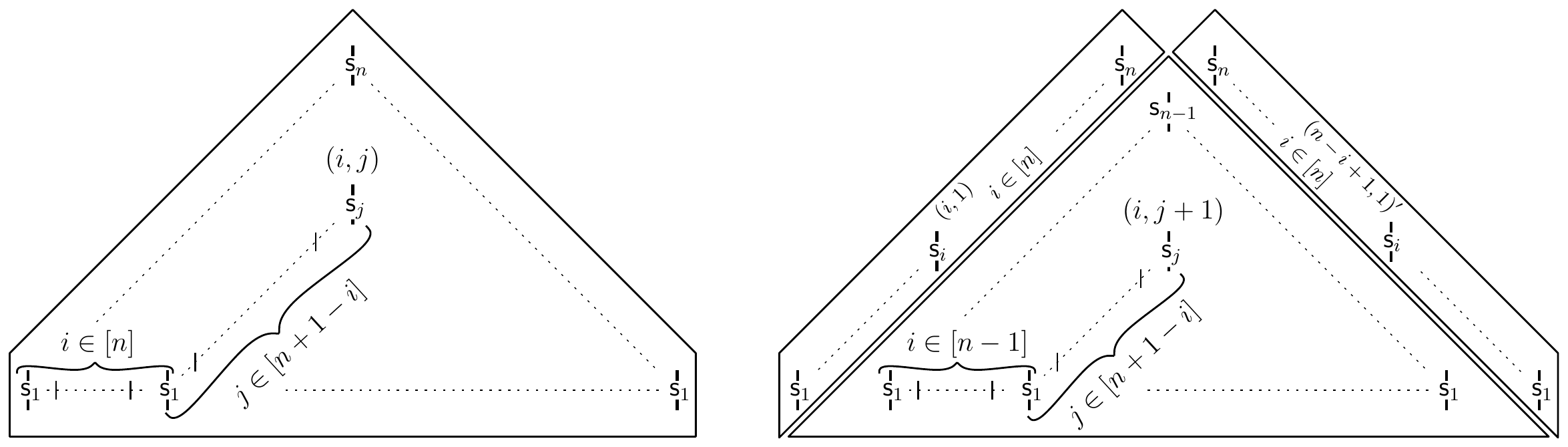}}
\caption{The identification pattern of the letters of a word~$\sq{w}_\circ(\sq{c})\sq{c}\transpose$ (right) obtained by fattening a triangle~$\sq{w}_\circ(\sq{c})$ (left) by following the labeling rules. The pattern is given by the labeling on the sorting networks of the two words.}
\label{fig:identificationPattern}
\vspace{-.4cm}
\end{figure}

Observe that Lemma~\ref{lem:manyRefinementsLemma} implies that all the braid moves implied in a fattening sequence either induce Case~(3) or Case~(5) of Theorem~\ref{thm:effectBraidMove}. Therefore Lemma~\ref{lem:manyRefinementsLemma} yields a new construction for the classical associahedron, choosing the word~$\word$ to be simply a triangle~$\word=\sq{w}_\circ(\sq{c})$. Indeed no letter is a vertex in~$\subwordComplex(\word)=\{\varnothing\}$, and so no letter with label~$(i,j+1)$ (for~$i\in[n-1],j\in[n-i+1]$) is a vertex in the subword complex~$\subwordComplex(\word')$, where the~$\word'$ is obtained from the word~$\word$ by doubling all its~$\sq{s}_1$ letters. Since those letters are non-vertices in the subword complex~$\subwordComplex(\word)$, all the corresponding reverse~$0$-Hecke moves induce suspensions by Theorem~\ref{thm:effectNilHeckeMove} so that the subword complex~$\subwordComplex(\word')$ is isomorphic to the boundary complex of the~$n$-dimensional cross polytope. Finally Lemma~\ref{lem:manyRefinementsLemma} implies that all braid moves in the fattening sequence induce stellar subdivisions of edges, by Theorem~\ref{thm:effectBraidMove}. We obtain by Lemma~\ref{lem:realizability} a construction of the classical associahedron by successive stellar subdivisions of edges of the cross-polytope.

\begin{corollary}
\label{cor:associahedronByTruncations}
The~$n$-dimensional simplicial associahedron can be obtained by successive stellar subdivisions of edges of the~$n$-dimensional cross-polytope. Equivalently its polar dual can be obtained by successive truncations of codimension-$2$ faces of the~$n$-dimensional cube.
\end{corollary}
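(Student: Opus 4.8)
The plan is to assemble Corollary~\ref{cor:associahedronByTruncations} directly from the machinery developed just before it, treating the statement as the specialization of the fattening construction to the empty initial word. First I would take~$\word=\sq{w}_\circ(\sq{c})$, so that~$\subwordComplex(\word)=\{\varnothing\}$ is the complex on no vertices and no letter of~$\word$ is a vertex. By Corollary~\ref{cor:fatteningTriangle} this triangle can be moved to~$\sq{w}_\circ(\sq{c})\sq{c}\transpose$ by doubling all~$n(n-1)/2 + n = n(n+1)/2$ of its~$\sq{s}_1$ letters and applying~$n(n-1)/2$ braid moves interlaced with commutations; I would track the resulting sequence~$\word=\word_1,\dots,\word_\ell$ and analyze the topological type of~$\subwordComplex(\word_\ell)=\subwordComplex(\sq{w}_\circ(\sq{c})\sq{c}\transpose)$, which by Theorem~\ref{thm:identificationMultiassociahedronSubwordComplex} (applied with~$k=1$, after rotating~$\sq{c}\transpose$ to a prefix~$\sq{c}$ via Theorem~\ref{thm:rotationMap}) is isomorphic to the associahedron~$\multiassociahedron_{1,n}$.

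Next I would identify the two kinds of moves with the two geometric operations. The~$0$-Hecke (doubling) moves come first: since at the moment of doubling no involved letter is yet a vertex of the current complex, Theorem~\ref{thm:effectNilHeckeMove} tells us each such move produces a \emph{suspension} rather than a one-point-suspension. Starting from the single point~$\{\varnothing\}$ and iterating~$n$ independent suspensions (one for each distinct value of the doubled position, after commuting them to the front as justified in the proof of Corollary~\ref{cor:fatteningTriangle}) yields exactly the boundary complex of the~$n$-dimensional cross-polytope. I would then invoke Lemma~\ref{lem:manyRefinementsLemma}: every braid move in the fattening sequence falls into Case~(3) or Case~(5) of Theorem~\ref{thm:effectBraidMove}, and since no letter with label~$(i,j+1)$ is a vertex, the third letter of each braid triple is a non-vertex. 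Hence each braid move realizes precisely Case~(3), a \emph{stellar subdivision of an edge}~$\sq{q}_r\sq{q}_{r+2}$ (the needed edge existing by the remark following Theorem~\ref{thm:effectBraidMove}). Chaining these gives the associahedron as a sequence of edge stellar subdivisions of the cross-polytope.

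The second sentence of the corollary is then pure polarity bookkeeping: the~$n$-dimensional cross-polytope is polar-dual to the~$n$-cube, and under polar duality a stellar subdivision of a~$1$-dimensional face of the simplicial sphere corresponds to truncating the dual codimension-$2$ face of the cube. I would state this as the standard duality between stellar subdivision and truncation, and conclude via Lemma~\ref{lem:realizability} that the entire sequence is realizable as complete simplicial fans (equivalently, as simplicial polytopes), so the abstract isomorphism is an honest geometric construction.

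The main obstacle will be the bookkeeping in the first half: verifying that the doubling moves can genuinely be front-loaded so that \emph{all} suspensions occur before \emph{any} braid move, and that at each doubling step the relevant letter is still a non-vertex. This is exactly the content already secured by the inductive argument of Corollary~\ref{cor:fatteningTriangle} together with Lemma~\ref{lem:manyRefinementsLemma}, so strictly speaking the work is done; the delicate point is simply to cite these correctly and to confirm that the interlaced commutation moves, which never change the isomorphism type of the complex, do not disturb the vertex-status hypotheses of Theorems~\ref{thm:effectNilHeckeMove} and~\ref{thm:effectBraidMove}. Everything else—the cross-polytope identification and the truncation duality—is routine and classical.
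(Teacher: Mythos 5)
Your argument is essentially the paper's own: specialize the fattening sequence to the bare triangle~$\word=\sq{w}_\circ(\sq{c})$, use Lemma~\ref{lem:manyRefinementsLemma} together with Theorems~\ref{thm:effectNilHeckeMove} and~\ref{thm:effectBraidMove} to see that the~$n$ doublings are suspensions producing the boundary of the~$n$-dimensional cross-polytope and that the~$n(n-1)/2$ braid moves are stellar subdivisions of edges, and conclude by Lemma~\ref{lem:realizability} and polarity. Two harmless slips worth correcting: the word~$\sq{w}_\circ(\sq{c})$ contains only~$n$ letters~$\sq{s}_1$ (its total length is~$n(n+1)/2$, but~$\sq{s}_1$ occurs once per factor~$\sq{c}[n+1-i]$), which matches the~$n$ suspensions you correctly invoke afterwards; and the non-vertex in each braid triple is the \emph{middle} letter~$\sq{q}_{r+1}$, the one labelled~$(i,j+1)$, not the third one.
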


This is a special case of a result by V.~Volodin~\cite{Volodin} stating that any flag nestohedron can be obtained by successive such truncations of a cube. The last figure in~\cite{Volodin} depicts a~$3$-dimensional associahedron geometrically equivalent to the realization by J.-L.~Loday~\cite{Loday} (see Figure~\ref{fig:associahedraCeballosSantosZiegler} left). Corollary~\ref{cor:associahedronByTruncations} shows that the realization by J.-L.~Loday can be obtained that way in all dimensions. Indeed while following the sequence~$\word=\word_1,\dots,\word_\ell$ to fatten the triangle, we can apply the generic transformations described after Lemma~\ref{lem:realizability} to realize the successive suspensions and stellar subdivisions of edges of the current subword complex. We first associate to each letter of the word~$\word$ the zero vector~$\mathbf{0}$ in~$\Rmbb^0$, and we take the convention that when applying a reverse~$0$-Hecke move to a letter~$\sq{q}=\sq{s}_1$ labeled~$(i,1)$ in the word~$\word_r$, and associated to a ray~$\ray[]$ in the current fan realizing~$\subwordComplex(\word_r)$ (say of dimension~$d\in\Nmbb$), the resulting letters labeled~$(i,1)$ and~$(i,1)'$ are respectively associated to the vectors~$\ray[]\oplus(-\f)$ and~$\ray[]\oplus\f$ in the new fan realizing the suspension~$\subwordComplex(\word_{r+1})\cong\subwordComplex(\word_r)\join\{u_1,u_2\}$, where~$\Rmbb^{d+1}=\Rmbb^d\oplus\Rmbb\,\f$. After fattening the triangle, we therefore obtain the pattern of coordinates of Figure~\ref{fig:geometricPatternAssociahedron} which provides rays supporting a complete simplicial fan realizing the associahedron. The reader can refer to~\cite{PilaudSantos-brickPolytope} to check that this fan is isomorphic to the normal fan of the realization of the associahedron as a convex polytope by J.-L~Loday~\cite{Loday}.

\begin{figure}
\centerline{\includegraphics[width=1.3\textwidth]{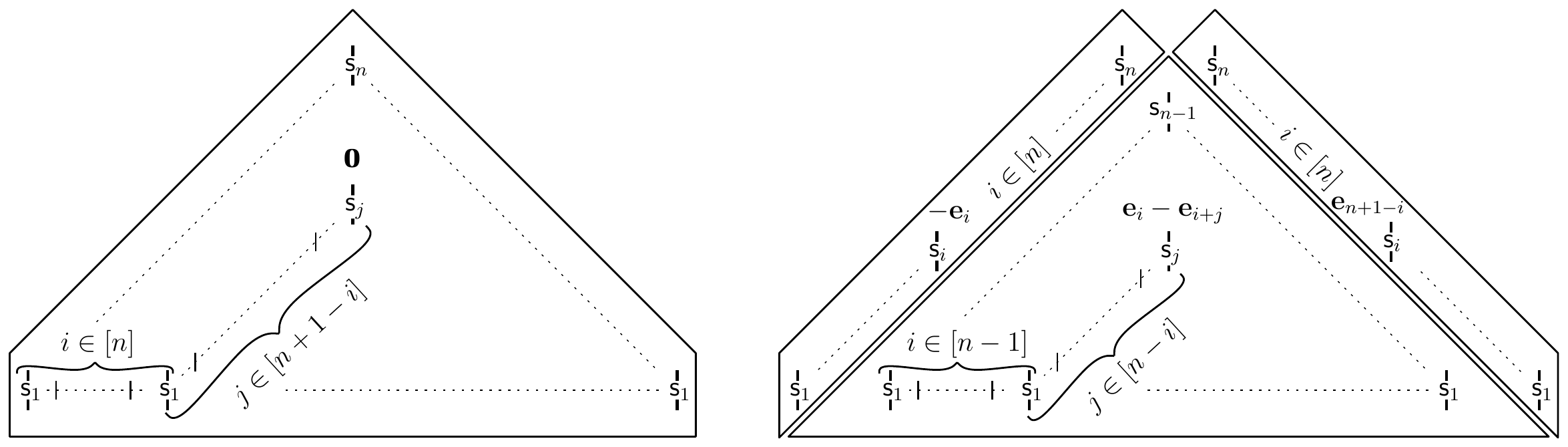}}
\caption{The pattern of coordinates obtained for the associahedron (right) after fattening a triangle (left). The obtained rays are these of the realization of the associahedron by J.-L. Loday~\cite{Loday} (see Figures~\ref{fig:associahedraCeballosSantosZiegler} left and~\ref{fig:associahedraLodayHohlwegLange} left).}
\label{fig:geometricPatternAssociahedron}
\end{figure}

\section{The construction continued to~$2$-associahedra}
\label{sec:2associahedra}

\subsection{Heuristic construction}
\label{subsec:heuristicConstruction}

Cases~(1),~(2) and~(4) of Theorem~\ref{thm:effectBraidMove} are always avoided by the braid moves of a fattening sequence by Lemma~\ref{lem:manyRefinementsLemma}. But we need a geometric transformation implementing the topological effect induced by Case~(5), similar to these after Lemma~\ref{lem:realizability}. Consider a braid move changing a factor~$\sq{q}_r\sq{q}_{r+1}\sq{q}_{r+2}$ of a word~$\word$ by a factor~$\sq{q}_r'\sq{q}_{r+1}'\sq{q}_{r+2}'$ of a word~$\word'$. Suppose that the subword complex~$\subwordComplex(\word)$ is realized by a fan~$\fan_\word$ in which the vertices~$\sq{q}_r,\sq{q}_{r+1}$ and~$\sq{q}_{r+2}$ of~$\subwordComplex(\word)$ are associated to rays generated by some vectors~$\ray[r],\ray[r+1]$ and~$\ray[r+2]$ respectively, that we identify to the rays themselves. Recall that we described the topological effect in Case~(5) of Theorem~\ref{thm:effectBraidMove} as ``moving the vertex~$\sq{q}_{r+1}$ from one side of the edge~$\sq{q}_r\sq{q}_{r+2}$ to the other'' (see Figure~\ref{fig:topologicalBraidMove}). In fan words, it heuristically means that the cone~$\cone\{\ray[r],\ray[r+2]\}$ should separate the ray~$\ray[r+1]$, associated to~$\sq{q}_{r+1}$ in the fan~$\fan_\word$, and the ray~$\ray[r+1]'$ associated to~$\sq{q}_{r+1}'$ in a potential fan realizing the subword complex~$\subwordComplex(\word')$. This intuitive description can be geometrically translated~as~follows.

\begin{compactitem}
 \item take any vector~$\ray[]$ in the interior of the cone~$\cone\{\ray[r],\ray[r+2]\}$, that is~$\ray[]$ can be written in the form~$\alpha\,\ray[r]+\beta\,\ray[r+2]$ for some~$\alpha,\beta>0$;
 \item move~$\ray[r+1]$ in the direction of~$\ray[]$ in order to cross the cone~$\cone\{\ray[r],\ray[r+2]\}$.
\end{compactitem}
For the last point, the ray~$\ray[r+1]$ should be moved ``not to far'' from~$\ray[]$ in order to ensure it to cross the cone~$\cone\{\ray[r],\ray[r+2]\}$, but no other cone of the fan. Since the direction from~$\ray[r+1]$ to~$\ray[]$ is~$\ray[]-\ray[r+1]$, our intuitive description suggests to replace the ray~$\ray[r+1]$ by the ray~$\ray[r+1]'\eqdef\ray[]+\varepsilon\,(\ray[]-\ray[r+1])$, with~$\varepsilon>0$ small enough. Since we are interested in rays, we can consider their generators up to rescaling and therefore say that the ray generated by the vector~$\ray[r+1]$ could be replaced by the ray generated by the vector~$\ray[r+1]'\eqdef\ray[]-\varepsilon\,\ray[r+1]$. In other words, any vector of the form~$\alpha\,\ray[r]+\beta\,\ray[r+2]-\varepsilon\,\ray[r+1]$ with~$\alpha,\beta,\varepsilon>0$ and~$\varepsilon$ small enough would be a legitimate candidate for~$\ray[r+1]'$. We generically choose~$\alpha=1,\beta=1$ and~$\varepsilon=1$ (see~Figure~\ref{fig:braidMoveFan} for an illustration). Finally, following the identifications between the letters of~$\word$ and~$\word'$, the letter~$\sq{q}_r'$ (resp.~$\sq{q}_{r+2}'$) is associated to the ray~$\ray[r+2]$ (resp.~$\ray[r]$).

\begin{figure}
\centerline{\includegraphics[width=1.12\textwidth]{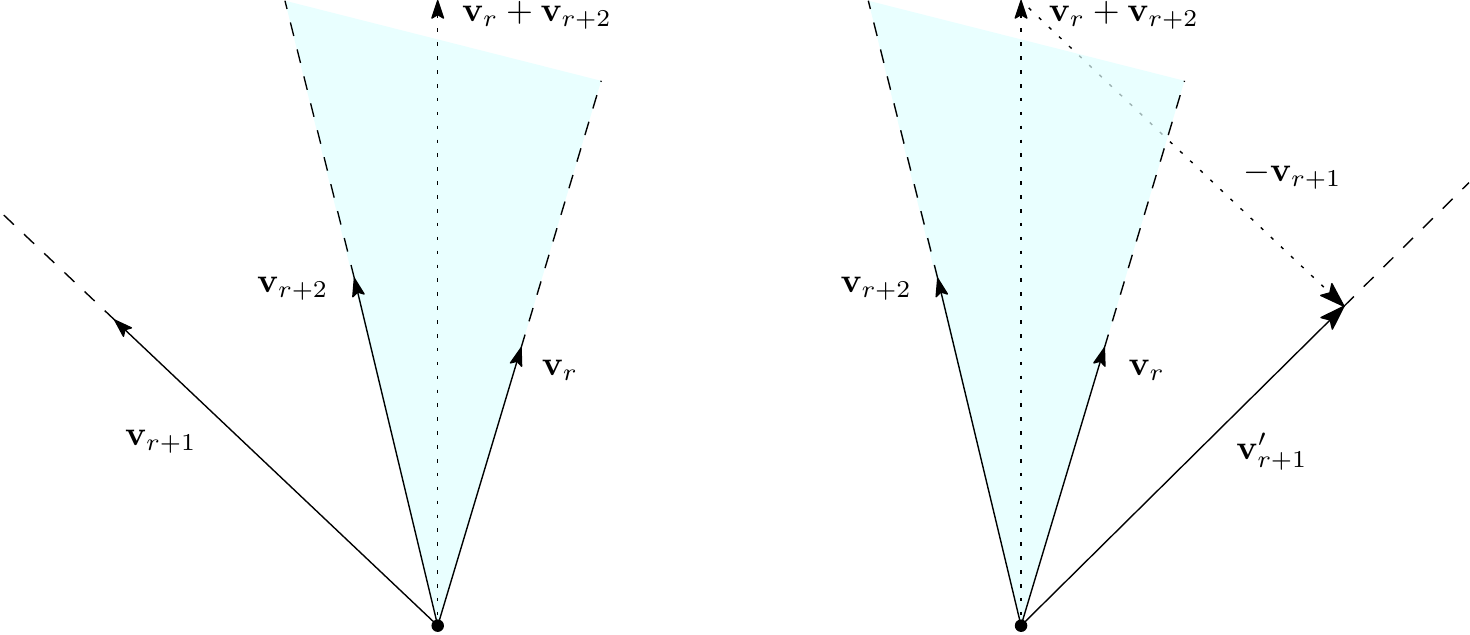}}
\caption{The geometric transformation on the rays of a complete simplicial fan corresponding to a braid move in Case~(5) of Theorem~\ref{thm:effectBraidMove}. In this figure only the relevant rays and cones are represented. The dotted vectors are represented in order to help understanding the figure but they are not rays of the fan.}
\label{fig:braidMoveFan}
\end{figure}

In the sequel, we will consider vectors~$\ray[\sq{q}]\in\Rmbb^d$ associated to the letters~$\sq{q}$ of some words~$\word$ (with~$d=\dim(\subwordComplex(\word))+1$), that we will then abusively call the~\defn{rays} of the subword complex~$\subwordComplex(\word)$, even if the rays in~$\{\ray[\sq{q}]\,|\,\sq{q}\text{ is a letter in }\word\}$ may not support a complete simplicial fan realizing~$\subwordComplex(\word)$. If they do, we say that these rays are~\defn{realizing} for~$\subwordComplex(\word)$. We only require that non-vertices are associated to the zero vector~$\mathbf{0}$. The previous description allows to derive a general heuristic formula for the rays obtained after a fattening sequence. Notice that we already gave after Lemma~\ref{lem:realizability} some transformations on rays associated to one-point-suspensions and stellar subdivisions, that correspond to the effect of moves described in Theorem~\ref{thm:effectNilHeckeMove} and Case~(3) of Theorem~\ref{thm:effectBraidMove}. We observe that the transformation that we defined for Case~(5) of Theorem~\ref{thm:effectBraidMove} is in fact also valid for Case~(3) of Theorem~\ref{thm:effectBraidMove}, since we impose that non-vertices are associated to the zero vector. So given a word~$\word=\factor_1\sq{w}_\circ(\sq{c})\factor_2$ containing a distinguished triangle, and to the letters of which rays are associated, we can use Lemma~\ref{lem:manyRefinementsLemma} and the transformations corresponding to moves to derive rays associated to the letters of the word~$\word'=\factor_1\sq{w}_\circ(\sq{c})\sq{c}^{-1}\factor_2$ obtained by fattening the distinguished triangle of~$\word$. The rays corresponding to the letters of the word~$\word$ span a vector space isomorphic to~$\Rmbb^d$ (with~$d=\dim(\subwordComplex(\word))+1$), and we consider a basis~$\f_1,\dots,\f_n$ of~$\Rmbb^n$ in direct sum with this vector space, so that each reverse~$0$-Hecke move of a fattening sequence lets a coordinate corresponding to one of the~$\f_i$ appear in the rays of the current subword complex. The resulting pattern for the rays of the letters in the factor~$\sq{w}_\circ(\sq{c})\sq{c}^{-1}$ of the word~$\word'$ is presented in Figure~\ref{fig:geometricPatternGeneral}.

\begin{figure}
\centerline{\includegraphics[width=1.3\textwidth]{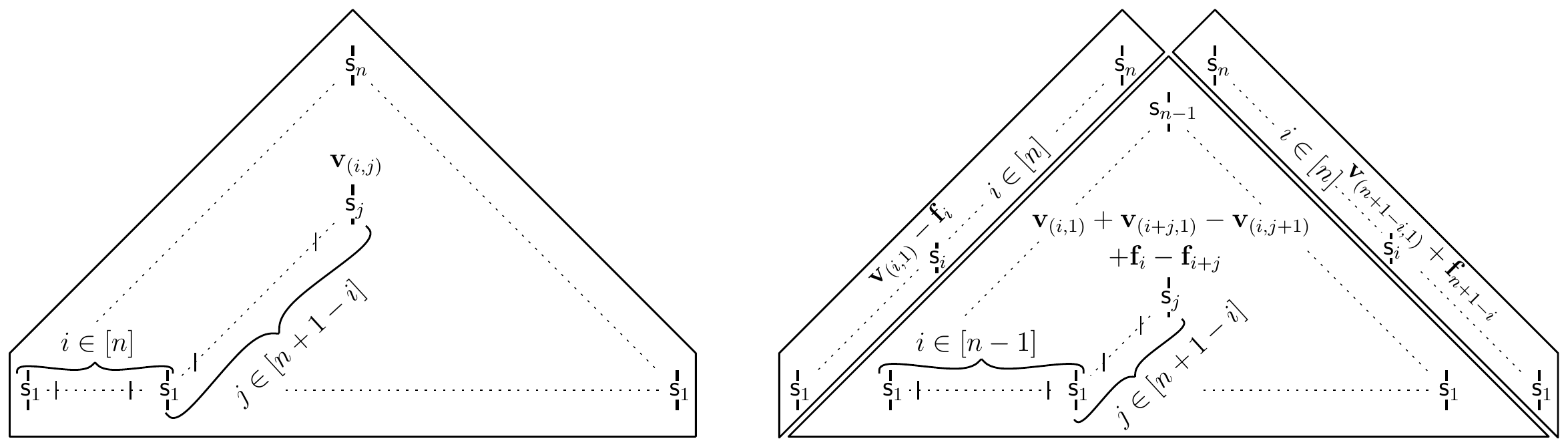}}
\caption{The sorting network of a triangle in a word~$\word$, in which the letter in position~$(i,j)$ is labeled with its associated ray~$\ray[(i,j)]\in\Rmbb^d$, for~$d=\dim(\subwordComplex(\word))-1$ (left) and the sorting network of the factor~$\sq{w}_\circ(\sq{c})\sq{c}^{-1}$ in the word~$\word'$ obtained from~$\word$ by fattening the triangle (right). The letters in this factor are again labeled with their associated ray, obtained from these in the initial triangle following the geometric transformations along the fattening sequence.}
\label{fig:geometricPatternGeneral}
\end{figure}

The pattern of Figure~\ref{fig:geometricPatternGeneral} gives an algorithmic way to produce candidates rays for a fan realization of the subword complex~$\subwordComplex(\word')$ whenever we already know that the set of rays we started with for the subword complex~$\subwordComplex(\word)$ support a complete simplicial fan realizing it. For~$2$-associahedra, we only need to fatten twice a triangle. Indeed, the word~$\sq{w}_\circ(\sq{c})$ can be fattened into the word~$\sq{w}_\circ(\sq{c})\sq{c}^{-1}$, which can be moved by commutations moves to the word~$\sq{c}\sq{w}_\circ(\sq{c})$. In this last word, we can fatten the suffix triangle into obtaining the word~$\sq{c}\sq{w}_\circ(\sq{c})\sq{c}^{-1}$, that we can move once more to~$\sq{c}^2\sq{w}_\circ(\sq{c})$ by commutations moves. The resulting rays for the subword complex~$\subwordComplex(\sq{c}^2\sq{w}_\circ(\sq{c}))$ are illustrated in Table~\ref{tab:2associahedron4Natural} for~$n=4$.

\begin{table}
\centerline{\resizebox{\textwidth}{!}{\tiny \begin{tabular}{@{}>{$}l<{\,$: $[$}@{}>{$}c<{$}*{6}{>{$}c<{$}}>{$}c<{$}@{}>{$]$}c}
\sq{s}_1 & -1\,\,\,\,\,\, & \,\,\,\,\,\,0\,\,\,\,\,\, & \,\,\,\,\,\,0\,\,\,\,\,\, & \,\,\,\,\,\,0\,\,\,\,\,\, & \,\,\,\,\,\,0\,\,\,\,\,\, & \,\,\,\,\,\,0\,\,\,\,\,\, & \,\,\,\,\,\,0\,\,\,\,\,\, & \,\,\,\,\,\,0 & \\
\sq{s}_2 & 0\,\,\,\,\,\, & \,\,\,\,\,\,-1\,\,\,\,\,\, & \,\,\,\,\,\,0\,\,\,\,\,\, & \,\,\,\,\,\,0\,\,\,\,\,\, & \,\,\,\,\,\,0\,\,\,\,\,\, & \,\,\,\,\,\,0\,\,\,\,\,\, & \,\,\,\,\,\,0\,\,\,\,\,\, & \,\,\,\,\,\,0 & \\
\sq{s}_3 & 0\,\,\,\,\,\, & \,\,\,\,\,\,0\,\,\,\,\,\, & \,\,\,\,\,\,-1\,\,\,\,\,\, & \,\,\,\,\,\,0\,\,\,\,\,\, & \,\,\,\,\,\,0\,\,\,\,\,\, & \,\,\,\,\,\,0\,\,\,\,\,\, & \,\,\,\,\,\,0\,\,\,\,\,\, & \,\,\,\,\,\,0 & \\
\sq{s}_4 & 0\,\,\,\,\,\, & \,\,\,\,\,\,0\,\,\,\,\,\, & \,\,\,\,\,\,0\,\,\,\,\,\, & \,\,\,\,\,\,-1\,\,\,\,\,\, & \,\,\,\,\,\,0\,\,\,\,\,\, & \,\,\,\,\,\,0\,\,\,\,\,\, & \,\,\,\,\,\,0\,\,\,\,\,\, & \,\,\,\,\,\,0 & \\
\sq{s}_1 & 1\,\,\,\,\,\, & \,\,\,\,\,\,-1\,\,\,\,\,\, & \,\,\,\,\,\,0\,\,\,\,\,\, & \,\,\,\,\,\,0\,\,\,\,\,\, & \,\,\,\,\,\,-1\,\,\,\,\,\, & \,\,\,\,\,\,0\,\,\,\,\,\, & \,\,\,\,\,\,0\,\,\,\,\,\, & \,\,\,\,\,\,0 & \\
\sq{s}_2 & 0\,\,\,\,\,\, & \,\,\,\,\,\,1\,\,\,\,\,\, & \,\,\,\,\,\,-1\,\,\,\,\,\, & \,\,\,\,\,\,0\,\,\,\,\,\, & \,\,\,\,\,\,0\,\,\,\,\,\, & \,\,\,\,\,\,-1\,\,\,\,\,\, & \,\,\,\,\,\,0\,\,\,\,\,\, & \,\,\,\,\,\,0 &\\
\sq{s}_3 & 0\,\,\,\,\,\, & \,\,\,\,\,\,0\,\,\,\,\,\, & \,\,\,\,\,\,1\,\,\,\,\,\, & \,\,\,\,\,\,-1\,\,\,\,\,\, & \,\,\,\,\,\,0\,\,\,\,\,\, & \,\,\,\,\,\,0\,\,\,\,\,\, & \,\,\,\,\,\,-1\,\,\,\,\,\, & \,\,\,\,\,\,0 &\\
\sq{s}_4 & 0\,\,\,\,\,\, & \,\,\,\,\,\,0\,\,\,\,\,\, & \,\,\,\,\,\,0\,\,\,\,\,\, & \,\,\,\,\,\,1\,\,\,\,\,\, & \,\,\,\,\,\,0\,\,\,\,\,\, & \,\,\,\,\,\,0\,\,\,\,\,\, & \,\,\,\,\,\,0\,\,\,\,\,\, & \,\,\,\,\,\,-1 &\\
\sq{s}_1 & 0\,\,\,\,\,\, & \,\,\,\,\,\,0\,\,\,\,\,\, & \,\,\,\,\,\,0\,\,\,\,\,\, & \,\,\,\,\,\,0\,\,\,\,\,\, & \,\,\,\,\,\,1\,\,\,\,\,\, & \,\,\,\,\,\,-1\,\,\,\,\,\, & \,\,\,\,\,\,0\,\,\,\,\,\, & \,\,\,\,\,\,0 &\\
\sq{s}_2 & 0\,\,\,\,\,\, & \,\,\,\,\,\,-1\,\,\,\,\,\, & \,\,\,\,\,\,1\,\,\,\,\,\, & \,\,\,\,\,\,0\,\,\,\,\,\, & \,\,\,\,\,\,1\,\,\,\,\,\, & \,\,\,\,\,\,0\,\,\,\,\,\, & \,\,\,\,\,\,-1\,\,\,\,\,\, & \,\,\,\,\,\,0 &\\
\sq{s}_3 & 0\,\,\,\,\,\, & \,\,\,\,\,\,-1\,\,\,\,\,\, & \,\,\,\,\,\,0\,\,\,\,\,\, & \,\,\,\,\,\,1\,\,\,\,\,\, & \,\,\,\,\,\,1\,\,\,\,\,\, & \,\,\,\,\,\,0\,\,\,\,\,\, & \,\,\,\,\,\,0\,\,\,\,\,\, & \,\,\,\,\,\,-1 &\\
\sq{s}_4 & 1\,\,\,\,\,\, & \,\,\,\,\,\,-1\,\,\,\,\,\, & \,\,\,\,\,\,0\,\,\,\,\,\, & \,\,\,\,\,\,0\,\,\,\,\,\, & \,\,\,\,\,\,1\,\,\,\,\,\, & \,\,\,\,\,\,0\,\,\,\,\,\, & \,\,\,\,\,\,0\,\,\,\,\,\, & \,\,\,\,\,\,0 &\\
\sq{s}_1 & 0\,\,\,\,\,\, & \,\,\,\,\,\,0\,\,\,\,\,\, & \,\,\,\,\,\,0\,\,\,\,\,\, & \,\,\,\,\,\,0\,\,\,\,\,\, & \,\,\,\,\,\,0\,\,\,\,\,\, & \,\,\,\,\,\,1\,\,\,\,\,\, & \,\,\,\,\,\,-1\,\,\,\,\,\, & \,\,\,\,\,\,0 &\\
\sq{s}_2 & 0\,\,\,\,\,\, & \,\,\,\,\,\,0\,\,\,\,\,\, & \,\,\,\,\,\,-1\,\,\,\,\,\, & \,\,\,\,\,\,1\,\,\,\,\,\, & \,\,\,\,\,\,0\,\,\,\,\,\, & \,\,\,\,\,\,1\,\,\,\,\,\, & \,\,\,\,\,\,0\,\,\,\,\,\, & \,\,\,\,\,\,-1& \\
\sq{s}_3 & 0\,\,\,\,\,\, & \,\,\,\,\,\,1\,\,\,\,\,\, & \,\,\,\,\,\,-1\,\,\,\,\,\, & \,\,\,\,\,\,0\,\,\,\,\,\, & \,\,\,\,\,\,0\,\,\,\,\,\, & \,\,\,\,\,\,1\,\,\,\,\,\, & \,\,\,\,\,\,0\,\,\,\,\,\, & \,\,\,\,\,\,0 &\\
\sq{s}_1 & 0\,\,\,\,\,\, & \,\,\,\,\,\,0\,\,\,\,\,\, & \,\,\,\,\,\,0\,\,\,\,\,\, & \,\,\,\,\,\,0\,\,\,\,\,\, & \,\,\,\,\,\,0\,\,\,\,\,\, & \,\,\,\,\,\,0\,\,\,\,\,\, & \,\,\,\,\,\,1\,\,\,\,\,\, & \,\,\,\,\,\,-1 &\\
\sq{s}_2 & 0\,\,\,\,\,\, & \,\,\,\,\,\,0\,\,\,\,\,\, & \,\,\,\,\,\,1\,\,\,\,\,\, & \,\,\,\,\,\,-1\,\,\,\,\,\, & \,\,\,\,\,\,0\,\,\,\,\,\, & \,\,\,\,\,\,0\,\,\,\,\,\, & \,\,\,\,\,\,1\,\,\,\,\,\, & \,\,\,\,\,\,0 &\\
\sq{s}_1 & 0\,\,\,\,\,\, & \,\,\,\,\,\,0\,\,\,\,\,\, & \,\,\,\,\,\,0\,\,\,\,\,\, & \,\,\,\,\,\,1\,\,\,\,\,\, & \,\,\,\,\,\,0\,\,\,\,\,\, & \,\,\,\,\,\,0\,\,\,\,\,\, & \,\,\,\,\,\,0\,\,\,\,\,\, & \,\,\,\,\,\,1 &\\
 \end{tabular}}}
 \vspace{.4cm}
\caption{The coordinates of the rays associated to the letters of the word~$\sq{c}^{2}\sq{w}_\circ(\sq{c})$ obtained by fattening twice a triangle for~$n=4$. These rays are the first candidates that we obtain to support a complete simplicial fan realizing the~$2$-associahedron~$\multiassociahedron_{2,4}$. However they do not support such fan.}
\vspace{-.5cm}
\label{tab:2associahedron4Natural}
\end{table}

We now wonder whether the rays we obtained support a complete simplicial fan realizing the subword complexes~$\subwordComplex(\sq{c}^2\sq{w}_\circ(\sq{c}))$, and if not ``how close'' it is to be so. For this we consider the set~$\fan_n$ of all cones generated by any set of rays which corresponds to a face of the subword complex~$\subwordComplex(\sq{c}^2\sq{w}_\circ(\sq{c}))$. We recall that a ridge of the subword complex~$\subwordComplex(\sq{c}^2\sq{w}_\circ(\sq{c}))$ is a face which is the intersection of exactly two facets. We will abusively refer to the cones of~$\fan_n$ corresponding to facets (resp. ridges) of the subword complex~$\subwordComplex(\sq{c}^2\sq{w}_\circ(\sq{c}))$ as to the~\emph{facets} (resp.~\emph{ridges}) of~$\fan_n$. The rays of~$\fan_n$ lie in~$\Rmbb^{2n}$ and any ridge is contained in two facets, each generated by~$2n$ vectors, that differ by a single generator. Therefore a ridge~$\ridge$ defines exactly~$2n+1$ rays, and thus at least one linear dependence between them. If the rays associated to the ridge~$\ridge$ are link by a single (up to rescaling) linear dependence not satisfying Condition~(2) of Proposition~\ref{prop:characterizationSimplicialFans}, we say that~$\ridge$ is a~\defn{bad ridge} of~$\fan_n$. If the space of linear dependences on the rays defined by~$\ridge$ has dimension greater than~$1$, we say that~$\ridge$ is a~\defn{degenerate ridge} of~$\fan_n$. In this case at least one of the facets containing the ridge~$\ridge$ is not full dimensional. We call such a facet a~\defn{degenerate cone} of~$\fan_n$. Proposition~\ref{prop:characterizationSimplicialFans} suggests to look at the following statistics on the set of cones~$\fan_n$.

\begin{compactitem}
 \item The rate of bad ridges in~$\fan_n$, which sort of measures ``non tractable issues''.
 \item The rates of degenerate cones and ridges in~$\fan_n$, which describe the ``global degeneracy'' in~$\fan_n$. Since the dual graph of the complex~$\subwordComplex(\sq{c}^2\sq{w}_\circ(\sq{c}))$ is regular, they also give the number of pairs of adjacent degenerate cones.
 \item The minimal dimension of a facet in~$\fan_n$, which measures ``local degeneracy''.
\end{compactitem}

We gather these statistics in Table~\ref{tab:degeneraciesNaive} for~$n\le8$.

\begin{table}
\vspace{.3cm}
\centerline{\small\begin{tabular}{@{}K{.2\textwidth}*{8}{K{.11\textwidth}}}
\toprule
n &1 & 2 & 3 & 4 & 5 & 6 & 7 & 8\\
\text{dimension of }\multiassociahedron_{2,n} & 1 & 3 & 5 & 7 & 9 & 11 & 13 & 15 \\
\hline
\# \text{\bf bad ridges } & \textbf{0} & \textbf{0} & \textbf{0} & \textbf{0} & \textbf{0} & \textbf{0} & \textbf{0} & \textbf{0} \\
\hline
\#\text{ degenerate ridges } & 0 & 0 & 11 & 282 & 5,058 & 78,904 & 1,144,499 & 15,909,182 \\
\#\text{ ridges }                   & 3 & 28 & 252 & 2,376 & 23,595 & 245,388 & 2,654,652 & 29,695,328 \\
\text{ratio (\%)}                  & 0 & 0 & 4.37 & 11.87 & 21.44 & 32.15  & 43.11 & 53.57 \\
\hline
\#\text{ degenerate cones } & 0 & 0 & 2 & 48 & 782 & 10,992 & 143,838 & 1,811,972 \\
\#\text{ cones }                   & 3 & 14 & 84 & 594 & 4,719 & 40,898 & 379,236 & 3,711,916 \\
\text{ratio (\%)}                  & 0 & 0 & 2.38 & 8.08 & 16.57 & 26.88  & 37.93 & 48.82 \\
\hline
\text{minimal dimension } & 2 & 4 & 5 & 6 & 7 & 8 & 9 & 10 \\
\bottomrule
\end{tabular}}
\vspace{.2cm}
\caption{The statistics for the sets of cones~$\fan_n$.}
\vspace{-.3cm}
\label{tab:degeneraciesNaive}
\end{table}

\begin{observation}
\label{ob:naive}
The empirical data suggest that fattening twice a triangle produce rays that do not realize the~$2$-associahedron, but ``only'' up to degeneracies. Indeed the process does not seem to let bad ridges appear. Yet the indicators for degeneracy are high, so that the rays we obtain should not be perturbed easily into realizing ones.
\end{observation}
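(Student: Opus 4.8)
The observation is empirical and rests entirely on the data gathered in Table~\ref{tab:degeneraciesNaive}, so the plan is to explain how these statistics are produced and how Proposition~\ref{prop:characterizationSimplicialFans} turns them into the three asserted claims. First I would build the set of cones~$\fan_n$ explicitly: using the identification of Theorem~\ref{thm:identificationMultiassociahedronSubwordComplex} one enumerates the facets and ridges of the subword complex~$\subwordComplex(\sq{c}^2\sq{w}_\circ(\sq{c}))$, and attaches to each vertex the ray obtained by fattening twice a triangle (as in Table~\ref{tab:2associahedron4Natural}). Since the complex is a simplicial sphere, every ridge~$\ridge$ is the intersection of exactly two facets, and the union of those two facets involves exactly~$2n+1$ rays in~$\Rmbb^{2n}$; the heart of the computation is the space of linear dependences among these~$2n+1$ vectors.

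For each ridge I would compute the dimension of this dependence space. When it is one-dimensional, the single relation is precisely the dependence appearing in Condition~(2) of Proposition~\ref{prop:characterizationSimplicialFans}, and I would flag~$\ridge$ as \emph{bad} exactly when the two exchanged coefficients~$\alpha,\alpha'$ fail to share a common nonzero sign. When the dependence space has dimension at least two, the~$2n+1$ vectors are too degenerate to arise from a simplicial fan, so~$\ridge$ is \emph{degenerate} and at least one of its two adjacent facets spans a subspace of dimension strictly below~$2n$, yielding a degenerate cone. Reading off the table then gives the computational content of the first two sentences: the bad-ridge row is identically zero for~$n\in[8]$, whereas the degenerate-ridge and degenerate-cone rows become positive as soon as~$n\ge3$ and the minimal facet dimension drops below~$2n$ there.

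Proposition~\ref{prop:characterizationSimplicialFans} then closes those two sentences. A complete simplicial fan requires every facet to be a full-dimensional simplicial cone, so the existence for~$n\ge3$ of a facet spanning fewer than~$2n$ dimensions already prevents the construction from being a realization; hence the rays obtained by fattening twice a triangle do \emph{not} realize~$\multiassociahedron_{2,n}$ for such~$n$. Conversely, the vanishing of the bad-ridge count shows that the only obstruction detected is degeneracy, never a genuine sign reversal of the kind Condition~(2) forbids, which is exactly the meaning of the qualifier ``only up to degeneracies''.

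The genuinely delicate clause is the last one, that the rays ``should not be perturbed easily into realizing ones'', and it is the main obstacle to promoting the observation to a theorem. A rigorous version would assert that no small generic perturbation of the rays can make every degenerate cone full-dimensional while preserving all sign conditions. The plausibility I would extract from the table is structural: since the dual graph of~$\subwordComplex(\sq{c}^2\sq{w}_\circ(\sq{c}))$ is regular, the degenerate-ridge ratio (exceeding one half at~$n=8$) also counts pairs of adjacent degenerate cones, so the degenerate cones form a large, densely interconnected subcomplex in which correcting one cone generically disturbs its neighbours along shared ridges. The perturbation problem is thus heavily over-constrained, but converting this heuristic into an actual non-perturbability statement is precisely what the observation stops short of doing, which is why it is phrased empirically and motivates the refined construction developed in the following subsections.
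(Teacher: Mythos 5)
Your plan is correct and matches the paper exactly: the paper offers no formal proof of Observation~\ref{ob:naive} beyond defining bad/degenerate ridges and degenerate cones via Proposition~\ref{prop:characterizationSimplicialFans}, computing the statistics of Table~\ref{tab:degeneraciesNaive}, and reading the observation off that table, which is precisely the procedure you describe. You also correctly identify that the final clause about non-perturbability is heuristic rather than provable, which is why the paper states this as an observation and not a theorem.
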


\subsection{Degrees of freedom}

In view of Observation~\ref{ob:naive}, we need a less naive construction to obtain realizing rays for~$2$-associahedra. We derive it from making the one presented in Section~\ref{subsec:heuristicConstruction} less generic. Indeed we always used the coefficients that we called generic in the geometric translations for the different topological effects of the braid moves. But as we notice after Lemma~\ref{lem:realizability} and at the beginning of Section~\ref{subsec:heuristicConstruction}, we may let some of them vary for the new rays of the letters implied in a reverse~$0$-Hecke move or in a braid move. This belongs to the following many degrees of freedom that we could consider for the construction.
\begin{description}
 \item[coefficients for reverse~$0$-Hecke moves] For any reverse~$0$-Hecke move, we can turn the ray~$\ray[]$ of the doubled letter into~$\ray[]\oplus\alpha\,\f$ and~$\ray[]\oplus\beta\,\f$, for any~$\alpha$ and~$\beta$ satisfying~$\alpha\beta<0$, to realize the corresponding one-point-suspension. The construction of Section~\ref{subsec:heuristicConstruction} generically keeps~$\alpha=-1$ and~$\beta=1$.
  \item[coefficients for braid moves] According to Lemma~\ref{lem:manyRefinementsLemma}, a braid move in a fattening sequence always implies a factor~$\sq{q}_r\sq{q}_{r+1}\sq{q}_{r+2}$ with letters~$\sq{q}_r,\sq{q}_{r+1}$ and~$\sq{q}_{r+2}$ respectively labeled~$(i,1)',(i,j+1)$ and~$(i+j,1)$ (for~$i\in[n-1],j\in[n-i]$). We denote the respective rays associated to these letters~$\ray[r],\ray[r+1]$ and~$\ray[r+2]$, and by~$\sq{q}_r'\sq{q}_{r+1}'\sq{q}_{r+2}'$ the factor by which~$\sq{q}_r\sq{q}_{r+1}\sq{q}_{r+2}$ is replaced by the braid move. We chose in Section~\ref{subsec:heuristicConstruction} to associate the letter~$\sq{q}_{r+1}'$ to any ray of the form~$\alpha\,\ray[r]+\beta\,\ray[r+2] -\varepsilon\,\ray[r+1]$ with~$\alpha>0,\beta>0,\varepsilon>0$ and~$\varepsilon$ small enough. In our construction of Section~\ref{subsec:heuristicConstruction}, we fatten twice a triangle. The first fattening sequence only contains braid move inducing stellar subdivisions while the second one only contains braid move inducing Case~(5) of Theorem~\ref{thm:effectBraidMove}. In the first case there is in fact only two choices of coefficients since the ray associated to the letter~$\sq{q}_{r+1}$ is the zero vector. We will denote by~$\coefLeft$ and~$\coefRight$ the respective coefficients of~$\ray[r]$ and~$\ray[r+2]$ in the first fattening sequence, and by~$\coefA,\coefB$ and~$\coefC$ the respective coefficients of~$\ray[r],\ray[r+1]$ and~$\ray[r+2]$ in the second fattening sequence of the construction. Since the effect of braid moves are local, we can~\apriori choose all these coefficients independently whereas the initial construction of Section~\ref{subsec:heuristicConstruction} generically set~$\coefLeft=\coefRight=\coefA=\coefB=\coefC=1$.
 \item[choice of the triangle] We did not insist on the triangle that we fatten in the construction. There is indeed only one choice for the initial word, which is itself a triangle, but the second fattening sequence is applied to the suffix triangle of the word~$\sq{c}\sq{w}_\circ(\sq{c})$. This word can be moved to the word~$\sq{w}_\circ(\sq{c})\sq{c}^{-1}$ by commutation moves so that we could also apply the second fattening sequence to the prefix triangle of this new word. It is easy to check that the two sets of rays obtained by both methods are linearly equivalent. Yet we can use the rotation map described in Theorem~\ref{thm:rotationMap} to obtain other non equivalent constructions. Denoting by~$\word^{\rotated k}$ the word obtained by applying~$k$ times the rotation map to a word~$\word$, we see that any word of the form~$\left(\sq{c}\sq{w}_\circ(\sq{c})\right)^{\rotated k.n}$, for~$k\in\Nmbb$, can be moved back to~$\sq{c}\sq{w}_\circ(\sq{c})$ by commutation moves. Therefore we could choose a lot of triangles to fatten (in fact~$n+1$) instead of always taking the suffix one without applying any rotation to the current word, as in the construction of Section~\ref{subsec:heuristicConstruction}.
 \item[starting associahedron] Finally we observe that we could start from any realizing rays for the subword complex~$\subwordComplex(\sq{c}\sq{w}_\circ(\sq{c}))$ to apply the second fattening sequence of the construction. In view of the numerous fan realizations for the usual associahedron, this is a wide additional degree of freedom.
\end{description}

We did not test exhaustively all the possibilities allowed by these multiple degrees of freedom. Since the initial motivation of this project was to realize as fans one of the first unrealized multiassociahedra~$\multiassociahedron_{2,5}$ and~$\multiassociahedron_{4,4}$, we mostly made some kind of ``depth first search testing'' in that direction. Therefore we will not mention all combinations that failed out and concentrate on this that actually provided results. It turns out that letting the coefficients~$\coefLeft$ and~$\coefRight$ vary was somehow successful. So from now on we will denote by~$\fan_n(\coefLeft,\coefRight)$ the set of cones obtained by fattening twice a suffix triangle of an initial triangle, where the first fattening is done with coefficients~$\coefLeft$ and~$\coefRight$, and the second one with coefficients~$\coefA=\coefB=\coefC=1$. The choice~$\coefLeft=5$ and~$\coefRight=3$ was the best one among these not letting the coefficients depend on the position of the corresponding letter. Table~\ref{tab:degeneraciesFixedLeftRight} gathers the statistics for the set of cones~$\fan_n(5,3)$ for~$n\le8$. Observe that the rates of degenerate ridges and cones decreases by a factor of about~$2$ with this simple change in the coefficients. In particular we obtain new realizing rays for the~$2$-associahedron~$\multiassociahedron_{2,3}$ (see~Table~\ref{tab:validNaiveCoordinate2associahedron4}). We came out with such coefficients mostly because we observed that having~$\coefLeft$ and~$\coefRight$ relatively prime helped reducing degeneracies. So we tried to keep them relatively prime while letting them depend on the position. It turns out that letting them be linear in~$(i,j)$ yielded us the best results, namely for~$\coefLeft=2n+4-i-j$ and~$\coefRight=\coefLeft-1$. We gather in Table~\ref{tab:degeneraciesLinearLeftRight} the statistics of the sets of cones~$\fan_n(2n+4-i-j,2n+3-i-j)$ for~$n\le8$.

\begin{table}
\vspace{.4cm}
\centerline{\small\begin{tabular}{@{}K{.2\textwidth}*{8}{K{.11\textwidth}}}
\toprule
n &1 & 2 & 3 & 4 & 5 & 6 & 7 & 8\\
\text{dimension of }\multiassociahedron_{2,n} & 1 & 3 & 5 & 7 & 9 & 11 & 13 & 15 \\
\hline
\# \text{\bf bad ridges } & \textbf{0} & \textbf{0} & \textbf{0} & \textbf{0} & \textbf{0} & \textbf{0} & \textbf{0} & \textbf{0} \\
\hline
\#\text{ degenerate ridges } & 0 & 0   & 0    & 78   & 2,216   & 43,298 & 724,546 & 11,150,457 \\
\#\text{ ridges }                   & 3 & 28 & 252 & 2,376 & 23,595 & 245,388 & 2,654,652 & 29,695,328 \\
\text{ratio (\%)}                  & 0 & 0   & 0 & 3.28 & 9.39 & 17.63 & 27.29 & 37.55 \\
\hline
\#\text{ degenerate cones } & 0 & 0 & 0 & 12 & 320 & 5,742 & 87,714 & 1,233,154 \\
\#\text{ cones }                   & 3 & 14 & 84 & 594 & 4,719 & 40,898 & 379,236 & 3,711,916 \\
\text{ratio (\%)}                  & 0 & 0 & 0 & 2.02 & 6.78 & 14.04  & 23.13 & 33.22 \\
\hline
\text{minimal dimension } & 2 & 4 & 6 & 7 & 8 & 9 & 10 & 11 \\
\bottomrule
\end{tabular}}
\vspace{.1cm}
\caption{The statistics for the sets of cones~$\fan_n(5,3)$.}
\label{tab:degeneraciesFixedLeftRight}
\end{table}

 \begin{table}
 \centerline{\begin{tabular}{@{}>{$}l<{\,$: $[$}@{}>{$}c<{$}*{4}{>{$}c<{$}}>{$}c<{$}@{}>{$]$}c}
\sq{s}_1 & -1\,\,\,\,\,\, & \,\,\,\,\,\,0\,\,\,\,\,\, & \,\,\,\,\,\,0\,\,\,\,\,\, & \,\,\,\,\,\,0\,\,\,\,\,\, & \,\,\,\,\,\,0\,\,\,\,\,\, & \,\,\,\,\,\,0\,\,\,\,\,\, & \\
\sq{s}_2 & 0\,\,\,\,\,\, & \,\,\,\,\,\,-1\,\,\,\,\,\, & \,\,\,\,\,\,0\,\,\,\,\,\, & \,\,\,\,\,\,0\,\,\,\,\,\, & \,\,\,\,\,\,0\,\,\,\,\,\, & \,\,\,\,\,\,0\,\,\,\,\,\, & \\
\sq{s}_3 & 0\,\,\,\,\,\, & \,\,\,\,\,\,0\,\,\,\,\,\, & \,\,\,\,\,\,-1\,\,\,\,\,\, & \,\,\,\,\,\,0\,\,\,\,\,\, & \,\,\,\,\,\,0\,\,\,\,\,\, & \,\,\,\,\,\,0\,\,\,\,\,\, & \\
\sq{s}_1 & 5\,\,\,\,\,\, & \,\,\,\,\,\,-3\,\,\,\,\,\, & \,\,\,\,\,\,0\,\,\,\,\,\, & \,\,\,\,\,\,-1\,\,\,\,\,\, & \,\,\,\,\,\,0\,\,\,\,\,\, & \,\,\,\,\,\,0\,\,\,\,\,\, & \\
\sq{s}_2 & 0\,\,\,\,\,\, & \,\,\,\,\,\,5\,\,\,\,\,\, & \,\,\,\,\,\,-3\,\,\,\,\,\, & \,\,\,\,\,\,0\,\,\,\,\,\, & \,\,\,\,\,\,-1\,\,\,\,\,\, & \,\,\,\,\,\,0\,\,\,\,\,\, & \\
\sq{s}_3 & 0\,\,\,\,\,\, & \,\,\,\,\,\,0\,\,\,\,\,\, & \,\,\,\,\,\,1\,\,\,\,\,\, & \,\,\,\,\,\,0\,\,\,\,\,\, & \,\,\,\,\,\,0\,\,\,\,\,\, & \,\,\,\,\,\,-1\,\,\,\,\,\, & \\
\sq{s}_1 & 0\,\,\,\,\,\, & \,\,\,\,\,\,2\,\,\,\,\,\, & \,\,\,\,\,\,0\,\,\,\,\,\, & \,\,\,\,\,\,1\,\,\,\,\,\, & \,\,\,\,\,\,-1\,\,\,\,\,\, & \,\,\,\,\,\,0\,\,\,\,\,\, & \\
\sq{s}_2 & 4\,\,\,\,\,\, & \,\,\,\,\,\,-3\,\,\,\,\,\, & \,\,\,\,\,\,1\,\,\,\,\,\, & \,\,\,\,\,\,1\,\,\,\,\,\, & \,\,\,\,\,\,0\,\,\,\,\,\, & \,\,\,\,\,\,-1\,\,\,\,\,\, & \\
\sq{s}_3 & 5\,\,\,\,\,\, & \,\,\,\,\,\,-3\,\,\,\,\,\, & \,\,\,\,\,\,0\,\,\,\,\,\, & \,\,\,\,\,\,1\,\,\,\,\,\, & \,\,\,\,\,\,0\,\,\,\,\,\, & \,\,\,\,\,\,0\,\,\,\,\,\, & \\
\sq{s}_1 & 0\,\,\,\,\,\, & \,\,\,\,\,\,4\,\,\,\,\,\, & \,\,\,\,\,\,-2\,\,\,\,\,\, & \,\,\,\,\,\,0\,\,\,\,\,\, & \,\,\,\,\,\,1\,\,\,\,\,\, & \,\,\,\,\,\,-1\,\,\,\,\,\, & \\
\sq{s}_2 & 0\,\,\,\,\,\, & \,\,\,\,\,\,5\,\,\,\,\,\, & \,\,\,\,\,\,-3\,\,\,\,\,\, & \,\,\,\,\,\,0\,\,\,\,\,\, & \,\,\,\,\,\,1\,\,\,\,\,\, & \,\,\,\,\,\,0\,\,\,\,\,\, & \\
\sq{s}_1 & 0\,\,\,\,\,\, & \,\,\,\,\,\,0\,\,\,\,\,\, & \,\,\,\,\,\,1\,\,\,\,\,\, & \,\,\,\,\,\,0\,\,\,\,\,\, & \,\,\,\,\,\,0\,\,\,\,\,\, & \,\,\,\,\,\,1\,\,\,\,\,\, & \\
 \end{tabular}}
 \vspace{.4cm}
 \caption{The rays supporting the set of cones~$\fan_3(5,3)$, associated to each letter of the word~$\sq{c}^2\sq{w}_\circ(\sq{c})$ for~$n=3$. These rays are realizing, that is the set of cones~$\fan_3(5,3)$ is a complete simplicial fan realizing the~$2$-associahedron~$\multiassociahedron_{2,3}$.}
 \label{tab:validNaiveCoordinate2associahedron4}
 \end{table}

\begin{table}
\centerline{\small\begin{tabular}{@{}K{.2\textwidth}*{8}{K{.11\textwidth}}}
\toprule
n &1 & 2 & 3 & 4 & 5 & 6 & 7 & 8\\
\text{dimension of }\multiassociahedron_{2,n} & 1 & 3 & 5 & 7 & 9 & 11 & 13 & 15 \\
\hline
\# \text{\bf bad ridges } & \textbf{0} & \textbf{0} & \textbf{0} & \textbf{0} & \textbf{0} & \textbf{0} & \textbf{0} & \textbf{20} \\
\hline
\#\text{ degenerate ridges } & 0 & 0   & 0    & 39   & 1,122   & 22,317 & 381,533 & 6,026,814 \\
\#\text{ ridges }                   & 3 & 28 & 252 & 2,376 & 23,595 & 245,388 & 2,654,652 & 29,695,328 \\
\text{ratio (\%)}               & 0 & 0   & 0 & 1.64 & 4.76 & 9.09 & 14.37 & 20.30 \\
\hline
\#\text{ degenerate cones } & 0 & 0 & 0 & 6 & 160 & 2,904 & 45,173 & 650,734 \\
\#\text{ cones }                   & 3 & 14 & 84 & 594 & 4,719 & 40,898 & 379,236 & 3,711,916 \\
\text{ratio (\%)}                  & 0 & 0 & 0 & 1.01 & 3.39 & 7.10 & 11.91 & 17.53 \\
\hline
\text{minimal dimension } & 2 & 4 & 6 & 7 & 8 & 9 & 10 & 11 \\
\bottomrule
\end{tabular}}
\vspace{.1cm}
\caption{The statistics for the sets of cones~$\fan_n(2n+4-i-j,2n+3-i-j)$. With this choice of coefficients, some bad ridges appear in the construction for~$n=8$.}
\vspace{-.4cm}
\label{tab:degeneraciesLinearLeftRight}
\end{table} 

\begin{observation}
\label{ob:amelioration}
It is possible to let the coefficients~$\coefLeft$ and~$\coefRight$ vary in order to still obtain sets of cones~$\fan_n(\coefLeft,\coefRight)$ with degeneracies but almost no bad ridges. Moreover some choices let the degeneracy indicator decrease remarkably. Indeed the choice~$\coefLeft=2n+4-i-j$ and~$\coefRight=2n+3-i-j$ again decreases by a factor of about~$2$ these indicators by comparison to the choice~$\coefLeft=5$ and~$\coefRight=3$.
\end{observation}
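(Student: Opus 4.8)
The statement is computational in nature, so the plan is to rerun the heuristic construction of Section~\ref{subsec:heuristicConstruction} while keeping the first-fattening coefficients $\coefLeft$ and $\coefRight$ as free parameters, and then to read the four statistics off the resulting sets of cones. First I would implement the two fattening sequences symbolically: starting from the triangle $\sq{w}_\circ(\sq{c})$ with every letter carrying the zero vector, apply the first fattening with the prescribed $\coefLeft,\coefRight$ — so that each stellar-subdivision braid move assigns to the subdivision vertex the ray $\coefLeft\,\ray[r]+\coefRight\,\ray[r+2]$, there being no contribution of $\sq{q}_{r+1}$ since that letter carries the zero vector — then commute to $\sq{c}\sq{w}_\circ(\sq{c})$ and apply the second fattening with $\coefA=\coefB=\coefC=1$ following the pattern of Figure~\ref{fig:geometricPatternGeneral}. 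For each choice of $(\coefLeft,\coefRight)$ this produces an explicit family of rays $\ray$ for $\subwordComplex(\sq{c}^2\sq{w}_\circ(\sq{c}))\cong\multiassociahedron_{2,n}$ in $\Rmbb^{2n}$.

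Next I would assemble the set of cones $\fan_n(\coefLeft,\coefRight)$ and, for each ridge, compute the space of linear dependences among its $2n+1$ rays. By Proposition~\ref{prop:characterizationSimplicialFans} a ridge is bad when this space is one-dimensional and its unique dependence violates the sign condition~(2), and degenerate when the space has dimension at least two; tallying these, together with the degenerate cones and the minimal facet dimension, gives exactly the four indicators used before Table~\ref{tab:degeneraciesNaive}. Running the tally first with the constant choice $\coefLeft=5,\coefRight=3$ reproduces Table~\ref{tab:degeneraciesFixedLeftRight}, and then with the linear choice $\coefLeft=2n+4-i-j,\coefRight=2n+3-i-j$ reproduces Table~\ref{tab:degeneraciesLinearLeftRight}. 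A column-by-column comparison of the two tables then establishes both claims: the bad ridges stay absent for $n\le7$ and number only $20$ for $n=8$, while the degenerate-ridge and degenerate-cone counts of the linear choice are each about half of those of the constant choice (for instance $39$ against $78$, and $1122$ against $2216$).

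The main obstacle is one of scale rather than of mathematics. The face numbers of $\multiassociahedron_{2,n}$ explode with $n$ — already $29{,}695{,}328$ ridges at $n=8$ — so the ridge enumeration and the repeated $(2n+1)$-vector dependence computations must be organized carefully, and this is precisely why the verification cannot be pushed past $n=8$. A second, more conceptual point is that the statement is genuinely heuristic: nothing in Theorems~\ref{thm:effectNilHeckeMove} and~\ref{thm:effectBraidMove} explains why relatively prime coefficients, or coefficients depending linearly on $(i,j)$, should suppress degeneracies, so the factor-of-two improvement can only be certified by the tabulated data and not derived in closed form. This is why I would state the result, as is done here, as an \emph{observation} supported by the computations rather than as a theorem carrying a self-contained proof.
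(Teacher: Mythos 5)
Your proposal is correct and matches the paper's approach exactly: the observation is purely empirical, and the paper establishes it precisely by running the fattening construction in Sagemath with the two choices of $(\coefLeft,\coefRight)$, tallying the bad/degenerate ridge and cone counts via the linear-dependence test of Proposition~\ref{prop:characterizationSimplicialFans}, and comparing Tables~\ref{tab:degeneraciesFixedLeftRight} and~\ref{tab:degeneraciesLinearLeftRight} column by column. Your reading of the data (roughly halved degeneracy ratios, zero bad ridges except the $20$ at $n=8$ for the linear choice) and your remark that no closed-form explanation is available are both faithful to the paper.
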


\subsection{Perturbations}

As a particular case of Observation~\ref{ob:amelioration}, we noticed that the set of cones~$\fan_5(14-i-j,13-i-j)$ seemed really close of realizing the~$2$-associahedron~$\multiassociahedron_{2,5}$. So we stopped our experiments on the coefficients~$\coefLeft$ and~$\coefRight$ and tried to perturb the rays randomly with the hope of killing the last remaining degeneracies. Again there is some freedom in this idea of ``perturbing'' the rays. Indeed we tried to add a random and small enough term to each of their coordinate, unsuccessfully. But then, applying the perturbations to the coefficients~$\coefLeft$ and~$\coefRight$ themselves finally gave us realizing rays for the~$2$-associahedron~$\multiassociahedron_{2,5}$, that are given in Table~\ref{tab:2associahedron5Perturbed}. Of course not all perturbations terms that appear in these rays are necessary. So working on the coordinates in Table~\ref{tab:2associahedron5Perturbed}, we found better ones, given in Table~\ref{tab:2associahedron5IntegerCoordinates}. Notice that these rays now have integer coordinates between~$-10$ and~$10$. Moreover we were able to reduce the number of perturbation terms to~$3$.

\begin{table}
\vspace{.6cm}
\centerline{\resizebox{1.58\textwidth}{!}{\tiny \begin{tabular}{@{}>{$}l<{\,$: $[$}@{}>{$}c<{$}*{8}{>{$}c<{$}}>{$}c<{$}@{}>{$]$}c}
\sq{s}_1 & -1 &  0 &  0 &  0 &  0 &  0 &  0 &  0 &  0 &  0 & \\
\sq{s}_2 & 0 &  -1 &  0 &  0 &  0 &  0 &  0 &  0 &  0 &  0 & \\
\sq{s}_3 & 0 &  0 &  -1 &  0 &  0 &  0 &  0 &  0 &  0 &  0 & \\
\sq{s}_4 & 0 &  0 &  0 &  -1 &  0 &  0 &  0 &  0 &  0 &  0 & \\
\sq{s}_5 & 0 &  0 &  0 &  0 &  -1 &  0 &  0 &  0 &  0 &  0 & \\
\sq{s}_1 & 11.995220082449654 &  -11.002018603888557 &  0 &  0 &  0 &  -1 &  0 &  0 &  0 &  0 & \\
\sq{s}_2 & 0 &  10.998025890846899 &  -10.000386365505443 &  0 &  0 &  0 &  -1 &  0 &  0 &  0 & \\
\sq{s}_3 & 0 &  0 &  9.995777402249201 &  -8.998111068535287 &  0 &  0 &  0 &  -1 &  0 &  0 & \\
\sq{s}_4 & 0 &  0 &  0 &  9.00229715779829 &  -8.001163693705028 &  0 &  0 &  0 &  -1 &  0 & \\
\sq{s}_5 & 0 &  0 &  0 &  0 &  1 &  0 &  0 &  0 &  0 &  -1 & \\
\sq{s}_1 & 0.9957828222479908 &  -0.003992713041657936 &  -0.0014994963267600525 &  0 &  0 &  1 &  -1 &  0 &  0 &  0 & \\
\sq{s}_2 & 1.9999160791264572 &  -11.002018603888557 &  9.995777402249201 &  -0.0030142895687461646 &  0 &  1 &  0 &  -1 &  0 &  0 & \\
\sq{s}_3 & 2.9965399682572844 &  -11.002018603888557 &  0 &  9.00229715779829 &  -0.0040464541747882166 &  1 &  0 &  0 &  -1 &  0 & \\
\sq{s}_4 & 10.995220082449654 &  -11.002018603888557 &  0 &  0 &  1 &  1 &  0 &  0 &  0 &  -1 & \\
\sq{s}_5 & 11.995220082449654 &  -11.002018603888557 &  0 &  0 &  0 &  1 &  0 &  0 &  0 &  0 & \\
\sq{s}_1 & 0 &  0.9936127652497042 &  -0.004608963256242049 &  0.0012209718570073136 &  0 &  0 &  1 &  -1 &  0 &  0 & \\
\sq{s}_2 & 0 &  2.001435184525553 &  -10.000386365505443 &  9.00229715779829 &  -0.0008415764775513424 &  0 &  1 &  0 &  -1 &  0 & \\
\sq{s}_3 & 0 &  9.998025890846899 &  -10.000386365505443 &  0 &  1.0 &  0 &  1 &  0 &  0 &  -1 & \\
\sq{s}_4 & 0 &  10.998025890846899 &  -10.000386365505443 &  0 &  0 &  0 &  1 &  0 &  0 &  0 & \\
\sq{s}_1 & 0 &  0 &  0.9945133005526081 &  0.004186089263003012 &  0.0007304432600054866 &  0 &  0 &  1 &  -1 &  0 & \\
\sq{s}_2 & 0 &  0 &  8.995777402249201 &  -8.998111068535287 &  1 &  0 &  0 &  1 &  0 &  -1 & \\
\sq{s}_3 & 0 &  0 &  9.995777402249201 &  -8.998111068535287 &  0 &  0 &  0 &  1 &  0 &  0 & \\
\sq{s}_1 & 0 &  0 &  0 &  8.00229715779829 &  -7.001163693705028 &  0 &  0 &  0 &  1 &  -1 & \\
\sq{s}_2 & 0 &  0 &  0 &  9.00229715779829 &  -8.001163693705028 &  0 &  0 &  0 &  1 &  0 & \\
\sq{s}_1 & 0 &  0 &  0 &  0 &  1 &  0 &  0 &  0 &  0 &  1 & \\
 \end{tabular}}}
 \vspace{.5cm}
\caption{Realizing rays of the~$2$-associahedron~$\multiassociahedron_{2,5}$, associated to each letter of the word~$\sq{c}^2\sq{w}_\circ(\sq{c})$ for~$n=5$. These rays were obtained by fattening twice the suffix triangle of an initial triangle with coefficients~$\coefLeft=14-i-j+\random^\ell$ and~$\coefRight=13-i-j+\random^r$, where~$\random^\ell$ and~$\random^r$ were uniform independent random variables in~$[-0.001,0.001]$ for~$i\in[4]$ and~$j\in[5-i]$.}
\vspace{-1cm}
\label{tab:2associahedron5Perturbed}
\end{table}

\begin{table}
\centerline{\resizebox{\textwidth}{!}{\tiny \begin{tabular}{@{}>{$}l<{\,$: $[$}@{}>{$}c<{$}*{8}{>{$}c<{$}}>{$}c<{$}@{}>{$]$}c}
\sq{s}_1& -1\,\,\,\,\,\, & \,\,\,\,\,\, 0\,\,\,\,\,\, & \,\,\,\,\,\, 0\,\,\,\,\,\, & \,\,\,\,\,\, 0\,\,\,\,\,\, & \,\,\,\,\,\, 0\,\,\,\,\,\, & \,\,\,\,\,\, 0\,\,\,\,\,\, & \,\,\,\,\,\, 0\,\,\,\,\,\, & \,\,\,\,\,\, 0\,\,\,\,\,\, & \,\,\,\,\,\, 0\,\,\,\,\,\, & \,\,\,\,\,\, 0& \\
\sq{s}_2& 0\,\,\,\,\,\, & \,\,\,\,\,\, -1\,\,\,\,\,\, & \,\,\,\,\,\, 0\,\,\,\,\,\, & \,\,\,\,\,\, 0\,\,\,\,\,\, & \,\,\,\,\,\, 0\,\,\,\,\,\, & \,\,\,\,\,\, 0\,\,\,\,\,\, & \,\,\,\,\,\, 0\,\,\,\,\,\, & \,\,\,\,\,\, 0\,\,\,\,\,\, & \,\,\,\,\,\, 0\,\,\,\,\,\, & \,\,\,\,\,\, 0& \\
\sq{s}_3& 0\,\,\,\,\,\, & \,\,\,\,\,\, 0\,\,\,\,\,\, & \,\,\,\,\,\, -1\,\,\,\,\,\, & \,\,\,\,\,\, 0\,\,\,\,\,\, & \,\,\,\,\,\, 0\,\,\,\,\,\, & \,\,\,\,\,\, 0\,\,\,\,\,\, & \,\,\,\,\,\, 0\,\,\,\,\,\, & \,\,\,\,\,\, 0\,\,\,\,\,\, & \,\,\,\,\,\, 0\,\,\,\,\,\, & \,\,\,\,\,\, 0& \\
\sq{s}_4& 0\,\,\,\,\,\, & \,\,\,\,\,\, 0\,\,\,\,\,\, & \,\,\,\,\,\, 0\,\,\,\,\,\, & \,\,\,\,\,\, -1\,\,\,\,\,\, & \,\,\,\,\,\, 0\,\,\,\,\,\, & \,\,\,\,\,\, 0\,\,\,\,\,\, & \,\,\,\,\,\, 0\,\,\,\,\,\, & \,\,\,\,\,\, 0\,\,\,\,\,\, & \,\,\,\,\,\, 0\,\,\,\,\,\, & \,\,\,\,\,\, 0& \\
\sq{s}_5& 0\,\,\,\,\,\, & \,\,\,\,\,\, 0\,\,\,\,\,\, & \,\,\,\,\,\, 0\,\,\,\,\,\, & \,\,\,\,\,\, 0\,\,\,\,\,\, & \,\,\,\,\,\, -1\,\,\,\,\,\, & \,\,\,\,\,\, 0\,\,\,\,\,\, & \,\,\,\,\,\, 0\,\,\,\,\,\, & \,\,\,\,\,\, 0\,\,\,\,\,\, & \,\,\,\,\,\, 0\,\,\,\,\,\, & \,\,\,\,\,\, 0& \\
\sq{s}_1& 12\,\,\,\,\,\, & \,\,\,\,\,\, -11\,\,\,\,\,\, & \,\,\,\,\,\, 0\,\,\,\,\,\, & \,\,\,\,\,\, 0\,\,\,\,\,\, & \,\,\,\,\,\, 0\,\,\,\,\,\, & \,\,\,\,\,\, -1\,\,\,\,\,\, & \,\,\,\,\,\, 0\,\,\,\,\,\, & \,\,\,\,\,\, 0\,\,\,\,\,\, & \,\,\,\,\,\, 0\,\,\,\,\,\, & \,\,\,\,\,\,0& \\
\sq{s}_2& 0\,\,\,\,\,\, & \,\,\,\,\,\, 11\,\,\,\,\,\, & \,\,\,\,\,\, -10\,\,\,\,\,\, & \,\,\,\,\,\, 0\,\,\,\,\,\, & \,\,\,\,\,\, 0\,\,\,\,\,\, & \,\,\,\,\,\, 0\,\,\,\,\,\, & \,\,\,\,\,\, -1\,\,\,\,\,\, & \,\,\,\,\,\, 0\,\,\,\,\,\, & \,\,\,\,\,\, 0\,\,\,\,\,\, & \,\,\,\,\,\, 0& \\
\sq{s}_3& 0\,\,\,\,\,\, & \,\,\,\,\,\, 0\,\,\,\,\,\, & \,\,\,\,\,\, 10\,\,\,\,\,\, & \,\,\,\,\,\, -9\,\,\,\,\,\, & \,\,\,\,\,\, 0\,\,\,\,\,\, & \,\,\,\,\,\, 0\,\,\,\,\,\, & \,\,\,\,\,\, 0\,\,\,\,\,\, & \,\,\,\,\,\, -1\,\,\,\,\,\, & \,\,\,\,\,\, 0\,\,\,\,\,\, & \,\,\,\,\,\, 0& \\
\sq{s}_4& 0\,\,\,\,\,\, & \,\,\,\,\,\, 0\,\,\,\,\,\, & \,\,\,\,\,\, 0\,\,\,\,\,\, & \,\,\,\,\,\, 9\,\,\,\,\,\, & \,\,\,\,\,\, -8\,\,\,\,\,\, & \,\,\,\,\,\, 0\,\,\,\,\,\, & \,\,\,\,\,\, 0\,\,\,\,\,\, & \,\,\,\,\,\, 0\,\,\,\,\,\, & \,\,\,\,\,\, -1\,\,\,\,\,\, & \,\,\,\,\,\, 0& \\
\sq{s}_5& 0\,\,\,\,\,\, & \,\,\,\,\,\, 0\,\,\,\,\,\, & \,\,\,\,\,\, 0\,\,\,\,\,\, & \,\,\,\,\,\, 0\,\,\,\,\,\, & \,\,\,\,\,\, 1\,\,\,\,\,\, & \,\,\,\,\,\, 0\,\,\,\,\,\, & \,\,\,\,\,\, 0\,\,\,\,\,\, & \,\,\,\,\,\, 0\,\,\,\,\,\, & \,\,\,\,\,\, 0\,\,\,\,\,\, & \,\,\,\,\,\, -1& \\
\sq{s}_1& 1\,\,\,\,\,\, & \,\,\,\,\,\, 0\,\,\,\,\,\, & \,\,\,\,\,\, 0\,\,\,\,\,\, & \,\,\,\,\,\, 0\,\,\,\,\,\, & \,\,\,\,\,\, 0\,\,\,\,\,\, & \,\,\,\,\,\, 1\,\,\,\,\,\, & \,\,\,\,\,\, -1\,\,\,\,\,\, & \,\,\,\,\,\, 0\,\,\,\,\,\, & \,\,\,\,\,\, 0\,\,\,\,\,\, & \,\,\,\,\,\, 0& \\
\sq{s}_2& 2\,\,\,\,\,\, & \,\,\,\,\,\, -11\,\,\,\,\,\, & \,\,\,\,\,\, 10\,\,\,\,\,\, & \,\,\,\,\,\, \text{\textcolor{red}{$\boxed{\mathbf{-1}}$}}\,\,\,\,\,\, & \,\,\,\,\,\, 0\,\,\,\,\,\, & \,\,\,\,\,\, 1\,\,\,\,\,\, & \,\,\,\,\,\, 0\,\,\,\,\,\, & \,\,\,\,\,\, -1\,\,\,\,\,\, & \,\,\,\,\,\, 0\,\,\,\,\,\, & \,\,\,\,\,\, 0& \\
\sq{s}_3& 3\,\,\,\,\,\, & \,\,\,\,\,\, -11\,\,\,\,\,\, & \,\,\,\,\,\, 0\,\,\,\,\,\, & \,\,\,\,\,\, 9\,\,\,\,\,\, & \,\,\,\,\,\,\text{\textcolor{red}{$\boxed{\mathbf{-2}}$}} \,\,\,\,\,\, & \,\,\,\,\,\, 1\,\,\,\,\,\, & \,\,\,\,\,\, 0\,\,\,\,\,\, & \,\,\,\,\,\, 0\,\,\,\,\,\, & \,\,\,\,\,\, -1\,\,\,\,\,\, & \,\,\,\,\,\, 0& \\
\sq{s}_4& 11\,\,\,\,\,\, & \,\,\,\,\,\, -11\,\,\,\,\,\, & \,\,\,\,\,\, 0\,\,\,\,\,\, & \,\,\,\,\,\, 0\,\,\,\,\,\, & \,\,\,\,\,\, 1\,\,\,\,\,\, & \,\,\,\,\,\, 1\,\,\,\,\,\, & \,\,\,\,\,\, 0\,\,\,\,\,\, & \,\,\,\,\,\, 0\,\,\,\,\,\, & \,\,\,\,\,\, 0\,\,\,\,\,\, & \,\,\,\,\,\, -1& \\
\sq{s}_5& 12\,\,\,\,\,\, & \,\,\,\,\,\, -11\,\,\,\,\,\, & \,\,\,\,\,\, 0\,\,\,\,\,\, & \,\,\,\,\,\, 0\,\,\,\,\,\, & \,\,\,\,\,\, 0\,\,\,\,\,\, & \,\,\,\,\,\, 1\,\,\,\,\,\, & \,\,\,\,\,\, 0\,\,\,\,\,\, & \,\,\,\,\,\, 0\,\,\,\,\,\, & \,\,\,\,\,\, 0\,\,\,\,\,\, & \,\,\,\,\,\, 0& \\
\sq{s}_1& 0\,\,\,\,\,\, & \,\,\,\,\,\, 1\,\,\,\,\,\, & \,\,\,\,\,\, 0\,\,\,\,\,\, & \,\,\,\,\,\, 0\,\,\,\,\,\, & \,\,\,\,\,\, 0\,\,\,\,\,\, & \,\,\,\,\,\, 0\,\,\,\,\,\, & \,\,\,\,\,\, 1\,\,\,\,\,\, & \,\,\,\,\,\, -1\,\,\,\,\,\, & \,\,\,\,\,\, 0\,\,\,\,\,\, & \,\,\,\,\,\, 0& \\
\sq{s}_2& 0\,\,\,\,\,\, & \,\,\,\,\,\, 2\,\,\,\,\,\, & \,\,\,\,\,\, -10\,\,\,\,\,\, & \,\,\,\,\,\, 9\,\,\,\,\,\, & \,\,\,\,\,\, \text{\textcolor{red}{$\boxed{\mathbf{-1}}$}}\,\,\,\,\,\, & \,\,\,\,\,\, 0\,\,\,\,\,\, & \,\,\,\,\,\, 1\,\,\,\,\,\, & \,\,\,\,\,\, 0\,\,\,\,\,\, & \,\,\,\,\,\, -1\,\,\,\,\,\, & \,\,\,\,\,\, 0& \\
\sq{s}_3& 0\,\,\,\,\,\, & \,\,\,\,\,\, 10\,\,\,\,\,\, & \,\,\,\,\,\, -10\,\,\,\,\,\, & \,\,\,\,\,\, 0\,\,\,\,\,\, & \,\,\,\,\,\, 1\,\,\,\,\,\, & \,\,\,\,\,\, 0\,\,\,\,\,\, & \,\,\,\,\,\, 1\,\,\,\,\,\, & \,\,\,\,\,\, 0\,\,\,\,\,\, & \,\,\,\,\,\, 0\,\,\,\,\,\, & \,\,\,\,\,\, -1& \\
\sq{s}_4& 0\,\,\,\,\,\, & \,\,\,\,\,\, 11\,\,\,\,\,\, & \,\,\,\,\,\, -10\,\,\,\,\,\, & \,\,\,\,\,\, 0\,\,\,\,\,\, & \,\,\,\,\,\, 0\,\,\,\,\,\, & \,\,\,\,\,\, 0\,\,\,\,\,\, & \,\,\,\,\,\, 1\,\,\,\,\,\, & \,\,\,\,\,\, 0\,\,\,\,\,\, & \,\,\,\,\,\, 0\,\,\,\,\,\, & \,\,\,\,\,\, 0& \\
\sq{s}_1& 0\,\,\,\,\,\, & \,\,\,\,\,\, 0\,\,\,\,\,\, & \,\,\,\,\,\, 1\,\,\,\,\,\, & \,\,\,\,\,\, 0\,\,\,\,\,\, & \,\,\,\,\,\, 0\,\,\,\,\,\, & \,\,\,\,\,\, 0\,\,\,\,\,\, & \,\,\,\,\,\, 0\,\,\,\,\,\, & \,\,\,\,\,\, 1\,\,\,\,\,\, & \,\,\,\,\,\, -1\,\,\,\,\,\, & \,\,\,\,\,\, 0& \\
\sq{s}_2& 0\,\,\,\,\,\, & \,\,\,\,\,\, 0\,\,\,\,\,\, & \,\,\,\,\,\, 9\,\,\,\,\,\, & \,\,\,\,\,\, -9\,\,\,\,\,\, & \,\,\,\,\,\, 1\,\,\,\,\,\, & \,\,\,\,\,\, 0\,\,\,\,\,\, & \,\,\,\,\,\, 0\,\,\,\,\,\, & \,\,\,\,\,\, 1\,\,\,\,\,\, & \,\,\,\,\,\, 0\,\,\,\,\,\, & \,\,\,\,\,\, -1& \\
\sq{s}_3& 0\,\,\,\,\,\, & \,\,\,\,\,\, 0\,\,\,\,\,\, & \,\,\,\,\,\, 10\,\,\,\,\,\, & \,\,\,\,\,\, -9\,\,\,\,\,\, & \,\,\,\,\,\, 0\,\,\,\,\,\, & \,\,\,\,\,\, 0\,\,\,\,\,\, & \,\,\,\,\,\, 0\,\,\,\,\,\, & \,\,\,\,\,\, 1\,\,\,\,\,\, & \,\,\,\,\,\, 0\,\,\,\,\,\, & \,\,\,\,\,\, 0& \\
\sq{s}_1& 0\,\,\,\,\,\, & \,\,\,\,\,\, 0\,\,\,\,\,\, & \,\,\,\,\,\, 0\,\,\,\,\,\, & \,\,\,\,\,\, 8\,\,\,\,\,\, & \,\,\,\,\,\, -7\,\,\,\,\,\, & \,\,\,\,\,\, 0\,\,\,\,\,\, & \,\,\,\,\,\, 0\,\,\,\,\,\, & \,\,\,\,\,\, 0\,\,\,\,\,\, & \,\,\,\,\,\, 1\,\,\,\,\,\, & \,\,\,\,\,\, -1& \\
\sq{s}_2& 0\,\,\,\,\,\, & \,\,\,\,\,\, 0\,\,\,\,\,\, & \,\,\,\,\,\, 0\,\,\,\,\,\, & \,\,\,\,\,\, 9\,\,\,\,\,\, & \,\,\,\,\,\, -8\,\,\,\,\,\, & \,\,\,\,\,\, 0\,\,\,\,\,\, & \,\,\,\,\,\, 0\,\,\,\,\,\, & \,\,\,\,\,\, 0\,\,\,\,\,\, & \,\,\,\,\,\, 1\,\,\,\,\,\, & \,\,\,\,\,\, 0& \\
\sq{s}_1& 0\,\,\,\,\,\, & \,\,\,\,\,\, 0\,\,\,\,\,\, & \,\,\,\,\,\, 0\,\,\,\,\,\, & \,\,\,\,\,\, 0\,\,\,\,\,\, & \,\,\,\,\,\, 1\,\,\,\,\,\, & \,\,\,\,\,\, 0\,\,\,\,\,\, & \,\,\,\,\,\, 0\,\,\,\,\,\, & \,\,\,\,\,\, 0\,\,\,\,\,\, & \,\,\,\,\,\, 0\,\,\,\,\,\, & \,\,\,\,\,\, 1& \\
 \end{tabular}}}
 \vspace{.4cm}
\caption{Realizing rays of the~$2$-associahedron~$\multiassociahedron_{2,5}$, associated to each letter of the word~$\sq{c}^2\sq{w}_\circ(\sq{c})$ for~$n=5$. These rays were obtained by working on the coordinates of these in Table~\ref{tab:2associahedron5Perturbed}. The perturbation terms, that is the coordinates that are different from these obtained by fattening twice a suffix triangle in an initial triangle with coefficients~$\coefLeft=14-i-j$ and~$\coefRight=13-i-j$, appear boxed and red. In the non perturbed set of rays, all the corresponding terms are equal to zero. Observe finally that all perturbation terms are negative integers.}
\vspace{.3cm}
\label{tab:2associahedron5IntegerCoordinates}
\end{table}

Observing the pattern formed on the sorting network of the word~$\sq{c}^2\sq{w}_5(\sq{c})$ by these perturbation terms, we derived the conjectural pattern for realizing rays for any~$2$-associahedron of Figure~\ref{fig:2associahedronConjecturedPattern}, and described in Section~\ref{sec:introduction}. There we stated Question~\ref{que:fanRealizations} as a question rather than as a conjecture, because of the~$20$ bad ridges appearing with our choice of coefficients for~$n=8$. This is~\apriori not a problem since the number of perturbations grows quadratically with~$n$ and our candidate pattern of rays is still realizing for~$n=8$. Moreover the other tries we made seemed to indicate that more random integer perturbations failed realizing~$2$-associahedra before~$n=6$. Yet these bad ridges may grow quickly with~$n$ and let our pattern finally fail being realizing for all~$n$. But we still obtain realizations for some~$2$-associahedra with integer coordinates between~$-(2n+1)$ and~$(2n+2)$, for~$n\le8$.

\begin{theorem}
The rays of the pattern in Figure~\ref{fig:2associahedronConjecturedPattern} support a complete simplicial fan in~$\Rmbb^{2n}$ which realizes the multiassociahedron~$\multiassociahedron_{2,n}$ for~$n\in[8]$.
\end{theorem}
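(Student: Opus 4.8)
The statement essentially restates Theorem~\ref{thm:fanRealizations}, so the plan is to verify directly, for each fixed~$n\in[8]$, that the explicit vectors~$\ray$ of the pattern satisfy the criterion of Proposition~\ref{prop:characterizationSimplicialFans}. The first step is to pin down the combinatorial skeleton on which the criterion operates: by Theorem~\ref{thm:identificationMultiassociahedronSubwordComplex} the complex~$\multiassociahedron_{2,n}$ is isomorphic to the subword complex~$\subwordComplex(\sq{c}^2\sq{w}_\circ(\sq{c}))$, whose facets, ridges, and the incidences between them can be enumerated explicitly. Since this complex is a sphere of dimension~$2n-1$ (by the results of Jonsson recalled in Section~\ref{sec:introduction}), each of its ridges is the intersection of exactly two facets, so Proposition~\ref{prop:characterizationSimplicialFans} indeed applies and the whole verification reduces to its two numbered conditions.

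For Condition~(1) I would single out a distinguished facet~$\facet$, the natural candidate being the one corresponding to the~$\sq{c}$-sorted starting position of the fattening construction, and check that the matrix whose columns are the~$2n$ vectors~$\ray$ indexed by~$\facet$ is invertible, so that~$\mathbf{V}_\facet$ is a basis of~$\Rmbb^{2n}$. The disjointness of the open cone~$\Rmbb_{>0}\mathbf{V}_\facet$ from every other open facet cone can then be certified by picking a generic interior direction~$p$ of~$\Rmbb_{>0}\mathbf{V}_\facet$ and verifying that~$\facet$ is the unique facet whose cone contains~$p$; in practice this is a finite sign check over all facets.

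The bulk of the work is Condition~(2). For each ridge~$\ridge$ with its two adjacent facets~$\facet\ssm\{x\}=\facet'\ssm\{x'\}$, I would solve the unique (up to rescaling) linear dependence~$\alpha\,\ray[x]+\alpha'\,\ray[x']+\sum_{y\in\facet\cap\facet'}\beta_y\,\ray[y]=0$ on the~$2n+1$ vectors indexed by~$\facet\cup\facet'$ and confirm that~$\alpha$ and~$\alpha'$ are nonzero and of the same sign. In the vocabulary of Section~\ref{subsec:heuristicConstruction}, this is exactly the statement that the candidate rays produce no \emph{bad ridges}, and the content of the theorem is the empirical fact, recorded in the discussion around Figure~\ref{fig:2associahedronConjecturedPattern}, that the perturbation terms of the pattern are tuned precisely so that this holds at every ridge whenever~$n\le8$.

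The main obstacle is computational scale rather than conceptual difficulty. The number of ridges to inspect grows very fast with~$n$, reaching roughly~$3\times10^{7}$ at~$n=8$, so the verification must be organized as a single exact-arithmetic sweep over the ridge set on the integer (and, where present, perturbed) coordinates, as carried out in Sagemath. The delicate point is that the realizing property is fragile: the unperturbed linear pattern already exhibits~$20$ bad ridges at~$n=8$, so one must confirm that the few boxed perturbation terms of Figure~\ref{fig:2associahedronConjecturedPattern} suffice to remove every sign violation at this value, which is exactly what separates a genuine fan realization from the merely degenerate sets of cones~$\fan_n$ tabulated earlier.
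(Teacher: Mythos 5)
Your proposal matches the paper's approach: the theorem is established purely by computational verification of the two conditions of Proposition~\ref{prop:characterizationSimplicialFans} (a distinguished basis facet plus an exact-arithmetic sign check of the unique linear dependence at every ridge), carried out in Sagemath on the complex~$\subwordComplex(\sq{c}^2\sq{w}_\circ(\sq{c}))\cong\multiassociahedron_{2,n}$ for each~$n\in[8]$. Your description of the scale of the computation and of the role of the perturbation terms in eliminating the bad ridges at~$n=8$ is consistent with what the paper reports.
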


\begin{figure}
\centerline{\includegraphics[width=1.4\textwidth]{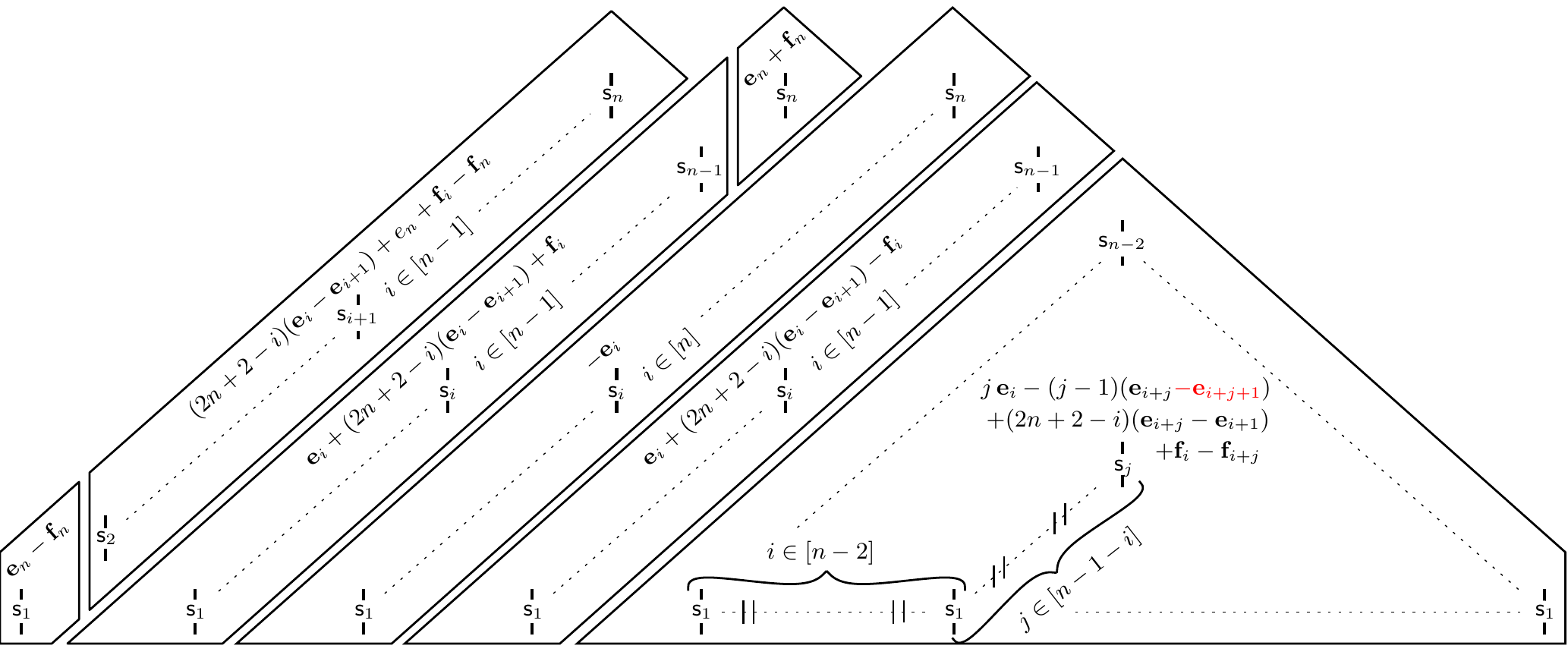}}
\caption{The candidate pattern for integer rays supporting a fan realizing~$2$-associahedra. We denote the~$n$ first vectors of the canonical basis of~$\Rmbb^{2n}$ by~$(\e_i)_{i\in[n]}$ and the~$n$ last ones by~$(\f_i)_{i\in[n]}$. The perturbation terms appear boxed and red. They are negative and replace zero coordinates of the non perturbed construction. This pattern is the one we obtain after applying~$2n$ times the rotation map to the underlying word~$\sq{c}^2\sq{w}_\circ(\sq{c})$ to have a better presentation.}
\label{fig:2associahedronConjecturedPattern}
\end{figure}

\section{Discussion}
\label{sec:discussion}

\subsection{Polytopality}

Unfortunately none of the new fans we produce happens to be the normal fan of a polytope. Not even in the case of the~$2$-associahedron~$\multiassociahedron_{2,3}$ which is known to have a polytopal realization by J.~Bokowski and V.~Pilaud~\cite{BokowskiPilaud}. Since all the transformations are intuitively chosen to fit with geometric constraints, it suggests that the way from combinatorics to geometry is very fine and confirms again how hard it is to handle with in the case of multiassociahedra.

\subsection{Further~$k$'s}

The few tries we made towards more general~$k$ than~$2$ did not work successfully and quickly produced sets of cones with many bad ridges, in contrast with Section~\ref{subsec:heuristicConstruction}. We tried to fatten three times a triangle, which produced really bad objects, and then we tried to fatten a triangle starting from the valid rays we had obtained after perturbing, which was not better. But our experiments lack exhaustive tries and the main issue with our method somehow comes from the fact that we have too many and too wide degrees of freedom to apply it.

%
%
\section*{Acknowledgments}

I am grateful to my advisor Vincent Pilaud for introducing me to subword complexes and multiassociahedra. I also want to warmly thank Lionel Pournin, Francisco Santos and again Vincent Pilaud for useful and helpful discussions and suggestions about the implementation of my geometric intuitions and the organization of this paper. In particular Lionel Pournin and Francisco Santos convinced me to keep on with this project as I was facing a dead end. I also thank Jean-Philippe Labb{\'e} for useful comments on the preprint version of this paper. Finally I acknowledge an anonymous referee for his or her relevant suggestions, corrections and comments.


\bibliographystyle{alpha}
\bibliography{bibliography}

\end{document}